\newcommand{\g}[1]{\gls{#1}}
\newglossaryentry{graphs} {
	name={\ensuremath{\bar{\Theta}}},
	description={the compact family of graphs}
}
\newglossaryentry{graphs_dual} {
	name={\ensuremath{\bar{\Theta^{*} }}},
	description={the compact family of dual graphs}
}
\newglossaryentry{incr} {
	name={\ensuremath{\mathcal{I}}},
	description={the space of increments}
}
\newglossaryentry{traj} {
	name={\ensuremath{\mathcal{T}}},
	description={the space of trajectories}
}
\newtheorem{theorem}{Theorem}
\newtheorem{definition}{Definition}
\newtheorem*{theorem*}{Theorem}
\newtheorem{lemma}[theorem]{Lemma}
\newtheorem{propo}{Proposition}[section]
\theoremstyle{definition}
\theoremstyle{remark}
\newtheorem*{Remark}{Remark}
\theoremstyle{remark}
\newtheorem{corollary}{Corollary}
\renewcommand\P{\mathbb{P}}
\newcommand\C{\mathbb{C}}
\newcommand\R{\mathbb{R}}
\newcommand\N{\mathbb{N}}
\newcommand\D{\mathbb{D}}
\newcommand\Z{\mathbb{Z}}
\newcommand\HH{\mathbb{H}}
\newcommand\FF{\mathcal{F}}
\newcommand\PP{\mathcal{P}}
\newcommand\supp{\mathrm{supp}}
\newcommand\tr{\mathrm{tr}}
\newcommand\Vol{\mathrm{Vol}}
\DeclarePairedDelimiter\abs{\lvert}{\rvert}
\DeclarePairedDelimiter\norm{\lVert}{\rVert}
\title[Lyapunov exponents and stability]{Lyapunov exponents and stability properties of higher rank
representations}
\author{Florestan {Martin-Baillon}}
\begin{document}

\maketitle


\begin{abstract}
	For a holomorphic family
	$ (\rho_{\lambda}) $ 
	of representations
	$ \Gamma \to \text{SL}(d, \C) $,
	where $ \Gamma $ is a finitely
	generated group,
	we introduce the notion
	of proximal stability
	and show that it is equivalent
	to the pluriharmonicity
	of Lyapunov exponents
	of the family
	(defined using random walks).
\end{abstract}

\tableofcontents

\section{Introduction}
\label{sec:introduction}

There exists a deep analogy
between the theory of Kleinian
groups
and one-dimensional
complex dynamics.
This idea is often
refered to as
the \emph{Sullivan dictionary},
because Sullivan
explored with great
success this connection
(see \cite{sullivanQuasiconformalHomeomorphismsDynamics1985a},
\cite{sullivanQuasiconformalHomeomorphismsDynamics1985},
\cite{mcmullenQuasiconformalHomeomorphismsDynamics1998}
).
The dictionary
is a conceptual framework
whose goal
is to \say{translate}
some concepts and methods
of one theory into
corresponding notions
for the other theory.
For example,
a rational map
corresponds to
a Kleinian group,
and the Julia set
of the rational map
corresponds
to the limit set
of the Kleinian group.
One of the major achievement of this dictionary
has been the proof of the
\say{No wandering domain} theorem by Sullivan
\cite{sullivanQuasiconformalHomeomorphismsDynamics1985a}
(for rational maps)
using methods developed by Ahlfors
for his proof of the Finiteness theorem
\cite{ahlforsFinitelyGeneratedKleinian1964}
(for Kleinian groups).
A major common tool
is the use of quasiconformal
maps and deformations.

In a serie of works,
starting by
\cite{deroinRandomWalksKleinian2012},
Deroin and Dujardin
extended this dictionary.
They developped
the study
of \emph{bifurcation currents}
for Kleinian groups,
following the
work of
DeMarco
\cite{demarcoDynamicsRationalMaps2001}
who introduced bifurcation currents
for holomorphic family of rational maps
on the complex projective line.
Such bifurcation currents
live on the parameter
space and connect
the \emph{stability} of the family
and the potential theory
of a certain function,
the \emph{Lyapunov exponent}.
The main property
of the bifurcation current
is that its support
is precisely the bifurcation
locus (suitably defined
for family of Kleinain groups),
and this fact gives
informations about this locus.
For example, they
prove that
some dynamically defined
subvarieties equidistribute
toward the bifurcation current.
The goal of the present work
is to extend the theory
of bifurcation currents
to \emph{higher rank representations},
that is the higher dimensional
generalization of Kleinian groups,
still in parallel with
the theory of complex
dynamics in higher dimension.
First we present some of
the existing work
which develop
the theory of
higher rank representations
on one side
and
the theory of
higher dimensional
complex dynamics
on the other side.

The theory of higher
rank representations
is a body of work
aiming at generalizing
the theory of Fuchsian
and Kleinian groups
to subgroups
of Lie groups
of rank $ \ge 2 $.
It is sometime called
the
\emph{Higher Teichmüller Theory}.
It originated in the study
of homogeneous geometric structures
(see \cite{goldmanGeometricStructuresVarieties1988}).
The introduction of the Hitchin
component \cite{hitchinLieGroupsTeichmuller1992}
as a generalization of
the Teichmuller space,
which extends the
$ \text{PSL}(2, \R) $
theory to
$ \text{PSL}(n, \R) $
generated a wealth
a subsequent works
which explored
the properties of
this component.
In order to understand
the Hitchin representations
(representations that lie
in the Hitchin component),
Labourie 
\cite{labourieAnosovFlowsSurface2006}
introduced
the notion of
Anosov representations,
a concept that was greatly
studied and extended
(see \cite{guichardAnosovRepresentationsDomains2012},
\cite{gueritaudAnosovRepresentationsProper2017},
\cite{kapovichAnosovSubgroupsDynamical2017},
\cite{kapovichDynamicsFlagManifolds2018},
\cite{bochiAnosovRepresentationsDominated2019}
for example).
Anosov representations
are now believed
to be the \say{right}
higher rank generalization
of convex-cocompact
representations
into rank 1 Lie groups.
They have interesting dynamical
properties,
notably
dynamical stablity
in their limit sets,
and
form open components
of the space of representations.

High dimensional complex
dynamics is the study
of iteration of
rational maps
of $ \P^{d}( \C ) $.
A major difficulty
in the study of
family of rational maps
is that the main
tools in dimension one,
the $ \lambda $-lemma
and the quasiconformal
theory
is not available
in higher dimensions.
This difficulty
has been tackled
by the use
of \emph{pluripotential theory},
briefly the theory
of plurisubharmonic functions
and positive currents,
see \cite{dinhDynamicsSeveralComplex2010}
and the survey
\cite{dujardinBifurcationCurrentsEquidistribution2014} 
for an exposition about
bifurcation currents
and
the difficulties
of a generalization to higher dimension.
A recent work
of Berteloot, Bianchi
and Dupont
\cite{bertelootDynamicalStabilityLyapunov2018}
studies the
stability of family
of endomorphisms
of $ \P^{d} (\C) $
and proves a
generalization
of the results of
DeMarco:
there exists 
a bifurcation current
whose support
is the bifurcation locus.
leading to a richer theory.
For polynomial
automorphisms of
$ \C^2 $,
Dujardin-Lyubich
\cite{dujardinStabilityBifurcationsDissipative2014}
and
then
Dujardin-Berger
\cite{bergerStabilityHyperbolicityPolynomial2017}
introduce
another notion
of stability/bifurcation
and give various dynamical characterization
of stability.
Part of these works
is to define the
notions of stability
and bifurcation
in this context:
if in dimension one all notions of stability coincide,
this is not the case anymore in higher dimension,

When 
studying
dynamically interesting
bifurcation current
(as in
\cite{demarcoDynamicsRationalMaps2001},
\cite{deroinRandomWalksKleinian2012}
and \cite{bertelootDynamicalStabilityLyapunov2018}),
the bifurcation
current
is defined
as the $ dd^c $
of the Lyapunov exponent
(of the endomorphism or
the representation),
but it is always
a non-trivial task
to prove
that it correspond to
a useful notion
of stability,
see section
\ref{sub:comp}.

Let us now explain
the results of
the present work.
We state the results
informally,
and refer to the
section \ref{sec:prelim}
for the precise definitions.
We study holomorphic family
of representation
$ \rho_{\lambda}:\Gamma \to G $ 
where
$ \Gamma $ is a finitely generated
group,
$ G = SL(d, \C) $
and $ \lambda $
varies in a complex manifold
$ \Lambda $.
As in
\cite{deroinRandomWalksKleinian2012},
we define the
Lyapunov exponents
$ (\chi_{i}(\lambda))_{i=1,\dots,d} $
of the representation
$ \rho_{\lambda} $ 
using a random walk
on $ \Gamma $.
The bifurcation current
is defined as
$ T_{bif} =
dd^{c}( \chi_{1}(\lambda)
+ \chi_{d}(\lambda)) $.
Inspired by
\cite{bertelootDynamicalStabilityLyapunov2018},
we define the notion
of \emph{proximal stability}
for a representation:
a family of representations
$ (\rho_{\lambda}) $ is
\emph{proximally stable}
if for any
$ \gamma \in \Gamma $
and $ \lambda_{0} $ 
such that
$ \rho_{\lambda_{0}} (\gamma) $
is proximal,
then 
$ \rho_{\lambda} (\gamma) $
is proximal for every
$ \lambda $.
The stability locus
is the set of parameters
where this property holds
and the bifurcation locus
is the complement of
the stability locus.
Our main result is:
\begin{center}
	\emph{Under some assumptions
	      on the family of representations,
	      the support of the bifurcation
	      current is the
	      bifurcation locus.}
\end{center}
This statement is of the
same nature as in
\cite{demarcoDynamicsRationalMaps2001},
\cite{deroinRandomWalksKleinian2012}
and
\cite{bertelootDynamicalStabilityLyapunov2018}.
For a precise statement,
see \ref{sec:statement}.

\textbf{Acknowledgments.}
The author wants to thanks
his advisor Bertrand Deroin
for introducing him to this subject
and his constant support.
He also thanks Christophe Dupont
for generously sharing his time
to explain his work.

\section{Preliminaries}
\label{sec:prelim}

We collect here the necessary
definitions and some useful facts
and results
about random walks on abstract
and linear groups.
We also recall some notions
of complex analysis.
We fix a finitely generated group $\Gamma$,
a complex vector space $ V $
of dimension $ d+1 $ 
and $G = SL(V) = SL(d+1, \C)$.

\subsection{Properties of linear subgroups}
\label{sub:linear_action}

We describes some notions relative to
a subgroup (more generaly a subsemigroup)
of $ G $ acting on $ V $ and $ \P V $.

Let $ \Lambda \subset G $
a subsemigroup of $ G $.
We say that $ \Lambda $
is \emph{irreducible}
if there does not
exist a non-trivial subspace
$ W $
of $ V $ stabilized by
all elements of $ \Lambda $,
i.e.
for all $ \gamma \in \Lambda $,
$ g W = W $.
We say that is it
\emph{strongly irreducible}
if there doesn't
exist a finite number
of non-trivial subspaces
$ W_{1}, \dots, W_{r}  $
of $ V $
such that the reunion
$ \bigcup W_{i} $
is
stabilized by
all elements of $ \Lambda $,
i.e.
for all $ i $ 
and for $ \gamma \in \Lambda $,
there exists a $ j $
such that
$ g W_{i} = W_{j} $.
Remark that $ \Lambda $
is strongly irreducible
if all of its
finite index
subsemigroups are
irreducible.

The main
theme of this work
is to understand the
dynamical properties
of a subsemigroup.
These properties
are related
to the contraction of
a transformation
on the projective space.
Here are the main notions
we use.

An element $ g \in G $ is \emph{proximal} if it has
a unique eigenvalue (counting multiplicity)
of maximal modulus.
In this case there exists
a fixed point
$ Fix^{+} g \in \P V $
and a fixed hyperplane
$ Fix^{-} g \in \P V^{*} $
such that
the orbit of every point not in $ Fix^{-} g $
converges (exponentially fast)
to $ Fix^{+} g $.

The semigroup $ \Lambda $ 
is proximal
if there exists a sequence
$ g_{n} \in \Lambda $
and a sequence of scalars
$ a_{n} \in \C $ 
such that
$ a_{n} g_{n} $ 
converges to an endomorphism
of rank 1.
If $ \Lambda $ 
contains a proximal element,
then it is is proximal.
If $\Lambda$ is irreducible,
the converse is true
(see \cite[Lem. 4.1]{benoistRandomWalksReductive2016})
In this case we think of the action of $ \Lambda $
as contracting on $ \P V $, because
the action of a rank 1 endomorphism $ \pi $ on
$ \P V $ contracts everything
(away from $ \P (\ker \pi) $)
to the point $ \P (Im \pi) $.

When $ \Lambda $ is irreducible
and proximal, it admits
a well-defined \emph{limit set}
$ L(\Lambda) \subset \P V $.
It is the unique minimal
$ \Lambda $-invariant
subset of $ \P V $.
By definition, it is
the set of points $ x \in \P V $
of the form
$ x = Im(\pi) $ 
for
endomorphisms $ \pi $
of rank 1
obtained
as limits
of sequences
$ a_{n} g_{n} $ 
where
$ g_{n} \in \Lambda $
and
$ a_{n} \in \C $
(see \cite[Lem. 4.2]{benoistRandomWalksReductive2016}).
It is the set where the dynamic of $ \Lambda $ 
is concentrated.
It is the closure of the set
of attracting fixed points
of proximal elements of
$ \Lambda $.

For a linear representation
$ \rho : \Gamma \to G $,
we say that the representation
has the property $ P $
if its image $ \rho(\Gamma) $
has the property $ P $
(where $ P $ can be irreducible, proximal, etc.).

\subsection{Random walks and Lyapunov exponents}

We introduce the notion of random walks
on group.
We will see that the data of random walk
which generates a semigroup $ \Lambda $
allows us to quantify all the
qualitative properties of $ \Lambda $
introduced in the previous paragraph
\ref{sub:linear_action}.

\subsubsection{Random walk on the abstract group}
\label{ssub:rw_abstract}

Let us fix a symmetric generator set of $ \Gamma $
and the associated word distance
$ \abs{\cdot}_{\Gamma} $.
Let's fix a probability mesure
$ \mu $ on $ \Gamma $
such that the support
of $ \mu $ generates $ \Gamma $
as a semi-group.
An example of such a probability
is
\begin{equation}
	\mu
	=
	\frac{1}{\# S}
	\sum_{s \in S} 
	\delta_{s} 
\end{equation}
where $ S $ is
a symmetric generator set
for $ \Gamma $.

The (left) random walk on $ \Gamma $
induced by $ \mu $ is the
Markov chain with transistion
probability at 
$ \gamma $
given by
$ (\gamma^{-1})_{*} \mu $.
This means
that a trajectory for this process
starting at $ \gamma_{0} $ is given
by
$ (\gamma^{(n)})$
where
$ \gamma^{(n)} =
\gamma_{n} \cdots
\gamma_{0}
$ 
and
$ (\gamma_{i}) $ 
is a sequence of independent random
variables with law $ \mu $.
The law of the $n$-th step
of the random walk is $ \mu^{n} $,
the measure $ \mu $ convoluted
$ n $-times with itself.
We call $ (\gamma_{n}) $
the sequence of increments
and
$ (\gamma^{(n)}) $
the associated
trajectory.
We denote by
$ \g{incr} $ the set
$ \Gamma^{\N} $
and call it the
space of increments,
which we
equip with the measure
$ \bar{\mu} = \mu^{\otimes \N} $.
This set parametrizes
the trajectories
of the random walk.

We will assume that $ \mu $ 
satisfies an exponential moment,
that is there exists $ \alpha > 0 $
such that
\begin{equation}
	\int_{\Gamma}
	\exp ( \alpha
	\abs{\gamma}_{\Gamma}
	)
	d \mu (\gamma)
	< \infty .
\end{equation}
It is always satisfied
if the support of $ \mu $
is finite.
It implies the weaker
first moment condition:
\begin{equation}
	\int_{\Gamma}
	\abs{\gamma}_{\Gamma}
	d \mu (\gamma)
	< \infty .
\end{equation}

We will use
the reversed random walk,
defined by:
$ \check{\mu} (\gamma) = \mu (\gamma^{-1}) $.
Remark that it satisfies the same
moment conditions as $ \mu $.

\subsubsection{Linear random walks}
\label{ssub:rw_lin}

In our setting, linear random walks
arise
in the following way.
Let $ \mu $ be a probability
on $ \Gamma $ as in the previous
paragraph and
let $ \rho: \Gamma \to G $ be a linear representation.
It induces a random walk on $ G $
given by $ \mu_{\rho} := \rho_{*} \mu $,
supported on $ \rho(\Gamma) $.

Because $ \mu $ satisfies and exponential
moment, $ \rho_{*} \mu $ satisfies
a (linear) exponential moment:
\begin{equation}
	\int_{\Gamma}
	\norm{
		\rho(\gamma)
	}^{\alpha'}
	d \mu (\gamma)
	\le
	\int_{\Gamma}
	\exp
	(
	\alpha'
	C_{\rho}
	\abs{\gamma}_{\Gamma}
	)
	d \mu (\gamma)
	<
	\infty
\end{equation}
where
$ C_{\rho}
=
\log
\max_{s}
\norm{\rho(s)}
$
for $ s $ in the generating
set defining the distance on $ \Gamma $,
and $ \alpha' $ is such that
$ \alpha' C_{\rho} \le \alpha $.

The largest Lyapunov exponent associated
to this random walk is by definition
\begin{equation}
\label{eq:def_lyap}
	\chi ( \rho )
	:=
	\lim_{n \to \infty} 
	\frac{1}{n}
	\int_{\Gamma}
		\log
		\norm{ \rho ( \gamma ) }
		d \mu^{n} (\gamma)
	,
\end{equation}
where $ \norm{\cdot} $ is any norm
(and the limit does not depend on the norm).
The limit is well defined by subadditivity
of the sequence appearing
and by finiteness of the first moment.
We define the \emph{Lyapunov spectrum}
$ \chi_{1}, \dots, \chi_{d+1} $ inductively
by
\begin{equation}
	\chi_{1} ( \rho )
	+
	\dots
	+
	\chi_{i} (\rho)
	:=
	\chi ( \wedge^{i} \rho)
	,
\end{equation}
where $ \wedge^{i} \rho $ 
is the induced representation
on the $ i $-th exterior power
$ \wedge^{i} V $.
We have
$ \chi_{1} \ge \dots \ge \chi_{d+1} $
and
$ \chi_{1} + \dots + \chi_{d+1} = 1 $,
so in particular $ \chi_{1} \ge 0 $.
In this article, we will be mainly
interested about $ \chi_{1} $, $ \chi_{2} $
and $ \chi_{d+1} $.

The largest Lyapunox exponent
controls the exponential growth
rate of typical random matrix product.
More precisely, we have the
following \say{Law of Large Numbers}:
\begin{theorem}
	[Furstenberg-Kesten]
	For $ \bar{\mu} $-almost all
	trajectories $ (\gamma_{n}) \in \g{incr} $
	we have
	\begin{equation}
		\lim
		_{n \to \infty} 
		\frac{1}{n}
		\log
		\norm{
			\rho(
			\gamma_{n}
			\cdots
			\gamma_{1} 
			) 
		}
		=
		\chi_{1} (\rho)
		.
	\end{equation}
\end{theorem}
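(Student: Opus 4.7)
The plan is to deduce this law of large numbers from Kingman's subadditive ergodic theorem, applied on the space of increments $\g{incr} = \Gamma^{\N}$ equipped with $\bar\mu = \mu^{\otimes \N}$ and the shift $\sigma$. Define, for each $\omega = (\gamma_n) \in \g{incr}$, the sequence
\begin{equation}
a_n(\omega) := \log \norm{\rho(\gamma_n \cdots \gamma_1)}.
\end{equation}
First I would verify subadditivity. Writing $\rho(\gamma_{n+m} \cdots \gamma_1) = \rho(\gamma_{n+m} \cdots \gamma_{m+1})\,\rho(\gamma_m \cdots \gamma_1)$ and using submultiplicativity of any operator norm, one gets
\begin{equation}
a_{n+m}(\omega) \le a_m(\omega) + a_n(\sigma^m \omega),
\end{equation}
which is exactly the form required by Kingman's theorem.

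Next I would check integrability and ergodicity. The measure-preserving system $(\g{incr}, \bar\mu, \sigma)$ is a Bernoulli shift, hence ergodic. For integrability, the exponential moment assumption on $\mu$ gives the linear moment $\int \abs{\gamma}_\Gamma d\mu < \infty$, and the estimate $\log\norm{\rho(\gamma)} \le C_\rho \abs{\gamma}_\Gamma$ used in \ref{ssub:rw_lin} bounds $\int a_1^+ \, d\bar\mu$. To control the negative part, I would use that $\rho(\gamma)\rho(\gamma^{-1}) = \mathrm{Id}$ forces $\log\norm{\rho(\gamma)} \ge -\log\norm{\rho(\gamma^{-1})} \ge -C_\rho \abs{\gamma^{-1}}_\Gamma = -C_\rho \abs{\gamma}_\Gamma$, so $a_1 \in L^1(\bar\mu)$.

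Kingman's subadditive ergodic theorem then yields, $\bar\mu$-almost surely and in $L^1$,
\begin{equation}
\lim_{n \to \infty} \frac{1}{n} a_n(\omega) = \inf_n \frac{1}{n} \int a_n \, d\bar\mu.
\end{equation}
Since the law of the product $\gamma_n \cdots \gamma_1$ under $\bar\mu$ is exactly $\mu^n$, one has $\int a_n \, d\bar\mu = \int \log\norm{\rho(\gamma)} d\mu^n(\gamma)$, and by subadditivity of this deterministic sequence its limit and infimum coincide with the right-hand side of \ref{eq:def_lyap}, namely $\chi_1(\rho)$. This gives the claimed almost sure convergence.

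There is no real obstacle here: the only points requiring care are the two-sided integrability of $a_1$ (which is where the use of an invertible representation and the exponential moment are used together) and the identification of Kingman's limit with the Lyapunov exponent as defined in \ref{eq:def_lyap}; both are standard and follow directly from the setup of \ref{ssub:rw_abstract}--\ref{ssub:rw_lin}.
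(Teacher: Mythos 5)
Your proof is correct: the subadditivity via submultiplicativity of the norm, the ergodicity of the Bernoulli shift, the two-sided integrability of $a_1$ (using the symmetric generating set so that $\abs{\gamma^{-1}}_{\Gamma}=\abs{\gamma}_{\Gamma}$, or simply that $\norm{\rho(\gamma)}\ge 1$ in $SL(d+1,\C)$), and the identification of Kingman's limit with the definition of $\chi_1$ are all sound. Note that the paper itself gives no proof of this statement — it is quoted as the classical Furstenberg--Kesten theorem from the random-walk literature — and your Kingman-based argument is exactly the standard modern proof of it, so there is nothing to reconcile with the paper's text.
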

Of course, if $ \chi_{1} = 0 $
we don't get much information.
For example, if the random walk
generates a semigroup
supported on a bounded group,
it is always the case.
This is essentialy the only obstruction:
\begin{theorem}
	[Furstenberg]
	If $ \rho $ is
	strongly irreducible
	and unbounded
	then
	\begin{equation}
		\chi_{1}(\rho)
		> 0
		.
	\end{equation}
\end{theorem}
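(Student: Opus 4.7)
The plan is to argue by contradiction: assume $\chi_1(\rho) = 0$ and deduce that $\rho(\Gamma)$ must be bounded, contradicting the hypothesis. The framework consists of three main ingredients: the existence of a stationary probability measure on $\mathbb{P}V$, Furstenberg's integral formula for $\chi_1$, and a recurrence argument for zero-drift cocycles combined with strong irreducibility.

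First, since $\mathbb{P}V$ is compact, the Markov--Kakutani fixed point theorem applied to the convolution operator on probability measures produces a $\mu_\rho$-stationary probability measure $\nu$ on $\mathbb{P}V$, i.e., $\int_G g_*\nu\, d\mu_\rho(g) = \nu$. Introducing the norm cocycle $\sigma(g,[v]) := \log(\norm{gv}/\norm{v})$, one decomposes $\log\norm{\rho(\gamma^{(n)})\tilde v}$ as a telescoping sum of cocycle values along the trajectory induced on $\mathbb{P}V$. Using stationarity of $\nu$ and the exponential moment condition (which guarantees $L^1$ integrability of $\sigma$ against $\mu_\rho \otimes \nu$), a standard computation yields Furstenberg's formula:
\begin{equation*}
\chi_1(\rho) = \int_G \int_{\mathbb{P}V} \sigma(g, [v])\, d\nu([v])\, d\mu_\rho(g).
\end{equation*}

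Assume now $\chi_1(\rho) = 0$. Then the cocycle $\sigma$ has vanishing integral against $\bar{\mu}\otimes \nu$, which is invariant under the skew product $(\omega, [v]) \mapsto (T\omega, \rho(\gamma_1)^{-1}\cdot [v])$ on $\g{incr}\times \mathbb{P}V$ (where $T$ is the shift). An Atkinson-type recurrence theorem for integrable real cocycles over such dynamical systems then implies that the random walk $n \mapsto \log\norm{\rho(\gamma^{(n)})\tilde v}$ is recurrent on $\R$ for $\bar{\mu}\otimes \nu$-almost every pair $(\omega, [\tilde v])$; in particular it does not drift to $\pm\infty$. Strong irreducibility prevents $\nu$ from charging any finite union of proper projective subspaces, so one can select a basis of $V$ whose projective classes lie in the support of $\nu$; the resulting uniform boundedness of $\norm{\rho(\gamma^{(n)})v_i}$ along $\bar\mu$-a.e.\ trajectory then yields boundedness of the operator norms $\norm{\rho(\gamma^{(n)})}$. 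A zero-one law combined with the fact that $\mu$ generates $\Gamma$ as a semigroup propagates this to all of $\rho(\Gamma)$, producing the desired contradiction.

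The main obstacle in this plan is the passage from orbit-wise recurrence of the norm cocycle (a statement involving a $\nu$-generic direction $[v]$) to uniform boundedness of operator norms. The subtlety is that the exceptional set on which $\norm{\rho(\gamma^{(n)})v}$ escapes to infinity may a priori depend measurably on $[v]$, and disentangling these requires a careful use of the Fubini structure of $\bar\mu\otimes\nu$ together with non-concentration of $\nu$ on proper subspaces. An alternative, closer to Furstenberg's original argument, is to use $\chi_1 = 0$ and stationarity to construct a $\rho(\Gamma)$-invariant semi-norm on $V$ by a suitable $\nu$-averaging procedure; strong irreducibility then upgrades this semi-norm to a genuine norm, placing $\rho(\Gamma)$ inside a compact subgroup of $G$ and again contradicting unboundedness.
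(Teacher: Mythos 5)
The paper does not prove this statement at all: it is quoted as a classical theorem of Furstenberg (with the surrounding theory taken from the random-walk literature), so your argument has to stand entirely on its own, and as written it has a genuine gap at its pivotal step. Atkinson's recurrence theorem for an integrable zero-mean cocycle gives only that the Birkhoff sums return near $0$ infinitely often, i.e.\ $\liminf_n \bigl|\log\norm{\rho(\gamma^{(n)})v}\bigr| = 0$ almost surely; it does \emph{not} say the sums stay bounded. Recurrence and boundedness are very different: the simple random walk on $\Z$ has zero drift and is recurrent, yet is almost surely unbounded, and similarly a zero-exponent norm cocycle can (and in reducible examples, such as unipotent upper-triangular walks, does) have unbounded excursions. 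So the sentence asserting ``the resulting uniform boundedness of $\norm{\rho(\gamma^{(n)})v_i}$ along $\bar\mu$-a.e.\ trajectory'' does not follow from what precedes it, and the rest of the chain (basis in $\supp\nu$, zero-one law, boundedness of $\rho(\Gamma)$) collapses with it. Even if one granted almost-sure boundedness of the trajectories, the bound would be a random constant $C(\omega)$; a zero-one law only upgrades qualitative a.s.\ statements, it does not produce the uniform bound over all of $\rho(\Gamma)$ that ``bounded group'' requires.

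The repair is essentially the alternative you mention only in passing, which is Furstenberg's actual argument: from $\chi_1(\rho)=0$ one shows (by a martingale/convexity argument applied to $\rho(\gamma_1\cdots\gamma_n)_*\nu$, not by pointwise recurrence) that the stationary measure $\nu$ is in fact $\rho(\Gamma)$-invariant; strong irreducibility guarantees $\nu$ gives zero mass to every proper projective subspace, and the stabilizer in $\mathrm{PGL}(V)$ of such a proper measure is compact, so $\rho(\Gamma)$ is bounded in $SL(V)$, the desired contradiction. Equivalently one can phrase this as your invariant-norm construction, but that construction needs to be carried out via the invariance of $\nu$ (or via the non-degeneracy of limit points $a_n\rho(g_n)$ against a proper measure), not via Atkinson recurrence, and it is where strong irreducibility genuinely enters. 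As it stands, your proposal uses strong irreducibility only to choose a basis inside $\supp\nu$, which is too weak to rule out the unbounded, zero-exponent behaviour the theorem is meant to exclude.
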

This theorem is emblematic
of the philosophy of this topic:
it improves a qualitative statement
(unboundedness) to a quantitative one:
almost every trajectory grows
exponentialy fast (at a given rate).
In the same spirit we have
\begin{theorem}
	[Furstenberg]
	If $\rho$ is strongly irreducible,
	then $ \rho $ is proximal
	if and only if
	$ \chi_{1} (\rho) > \chi_{2} (\rho) $.
\end{theorem}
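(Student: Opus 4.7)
I would prove the two implications separately. The direction $\chi_{1} > \chi_{2} \Rightarrow \rho$ proximal is soft and follows directly from Furstenberg--Kesten applied to $\wedge^{2}\rho$. The converse is the content of Guivarc'h's simplicity theorem for the top Lyapunov exponent and is the substantive half of the statement.

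\emph{Spectral gap implies proximality.} I would apply Furstenberg--Kesten simultaneously to $\rho$ and to $\wedge^{2}\rho$. Setting $M_{n} := \rho(\gamma_{n}\cdots \gamma_{1})$, for $\bar\mu$-almost every trajectory one has $\tfrac{1}{n}\log \|M_{n}\| \to \chi_{1}(\rho)$ and $\tfrac{1}{n}\log \|\wedge^{2} M_{n}\| \to \chi_{1}(\rho)+\chi_{2}(\rho)$. For the operator norm induced by a Hermitian metric one has $\|M\| = \sigma_{1}(M)$ and $\|\wedge^{2}M\| = \sigma_{1}(M)\sigma_{2}(M)$, where the $\sigma_{i}$ are the singular values in decreasing order. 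The hypothesis $\chi_{1} > \chi_{2}$ therefore forces $\sigma_{2}(M_{n})/\sigma_{1}(M_{n}) \to 0$ exponentially fast along almost every trajectory. Any subsequential limit of the normalized matrices $M_{n}/\|M_{n}\|$ in $\mathrm{End}(V)$ then has rank one, and by definition this means $\rho(\Gamma)$ is proximal.

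\emph{Proximality implies spectral gap: setup.} Assume now that $\rho$ is strongly irreducible and proximal. The strategy is to realize $\chi_{2}-\chi_{1}$ as the top Lyapunov exponent of the derivative cocycle of the $\Gamma$-action on $\P V$, and then to show that this exponent is strictly negative. First I would invoke Furstenberg's boundary theory: strong irreducibility together with proximality imply the existence of a unique $\mu_{\rho}$-stationary probability measure $\nu$ on $\P V$, which is supported on the limit set $L(\rho(\Gamma))$ and, by strong irreducibility together with unboundedness, has no atom. A standard martingale argument then shows that for $\bar\mu$-almost every trajectory, the pushforwards $(M_{n})_{*}\nu$ converge weakly to a Dirac mass, and every accumulation point of $M_{n}/\|M_{n}\|$ in $\mathrm{End}(V)$ has rank one. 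This gives the qualitative picture of almost sure contraction on $\P V$.

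\emph{Main obstacle.} The real difficulty is to upgrade this qualitative contraction to an \emph{exponential} contraction rate, equivalently to the strict inequality $\chi_{1} > \chi_{2}$. I would combine Furstenberg's integral formula expressing $\chi_{1}$ as the integral of $\log(\|gv\|/\|v\|)$ against $\mu_{\rho}\otimes \nu$ with its analogue for $\chi_{1}+\chi_{2}$ on $\P(\wedge^{2}V)$ (endowed with its own stationary measure), and compare the two. The crux is to rule out asymptotic equality of the two top singular values; this is done using that $\nu$ is atom-free, together with the exponential moment assumption on $\mu$, which affords the large-deviation control of the derivative cocycle along almost every trajectory that is needed to conclude strict positivity of $\chi_{1}-\chi_{2}$. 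This quantitative step is the main technical hurdle, and for a complete argument I would refer to Guivarc'h's original proof, or to the Hölder spectral-gap approach of Le Page for the Markov operator on $\P V$, as exposed for instance in Benoist--Quint's monograph.
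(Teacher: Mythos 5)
This theorem is quoted in the paper as a classical background result (Furstenberg, Guivarc'h); the paper gives no proof of it and later invokes the equivalence directly via the citation \cite[Cor. 10.5]{benoistRandomWalksReductive2016}, so there is no internal argument to compare yours against. Judged on its own terms, your treatment is consistent with how the literature proves it. The implication $\chi_{1}>\chi_{2}\Rightarrow$ proximal is complete and correct: Furstenberg--Kesten for $\rho$ and $\wedge^{2}\rho$ gives $\tfrac1n\log\bigl(\sigma_{2}(M_{n})/\sigma_{1}(M_{n})\bigr)\to \chi_{2}-\chi_{1}<0$, and any limit point of $M_{n}/\norm{M_{n}}$ then has top singular value $1$ and second singular value $0$, hence rank one, which is exactly the paper's definition of a proximal semigroup (note this direction does not even use strong irreducibility). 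The converse is correctly identified as the substantive half (Guivarc'h's simplicity theorem), but you do not prove it: you set up the boundary theory and then explicitly defer the quantitative step to Guivarc'h, Le Page, or Benoist--Quint. Since the paper itself treats the whole statement as a citation, this deferral matches the paper's level of detail rather than constituting a failure, but be aware it is a plan, not a proof.

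Two small technical points in your sketch of the hard direction. First, the almost sure weak convergence of pushforwards of the stationary measure to a Dirac mass is a statement about the products $\rho(\gamma_{1}\cdots\gamma_{n})$ (the order used for the Furstenberg limit map $\theta_{\rho}$ in the paper), not about $M_{n}=\rho(\gamma_{n}\cdots\gamma_{1})$; for the latter order one only gets convergence in law, or rank-one limit points with prescribed kernel via the adjoint limit map $\theta^{t}$, so you should not mix the two normal forms in the same sentence. Second, non-atomicity (indeed properness: zero mass on every proper projective subspace) of the stationary measure is already a consequence of strong irreducibility; unboundedness is not what is used there, it is what proximality gives you for free.
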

Remark that $ \chi_{2} - \chi_{1} $
is the average exponential rate of contraction
of tangent vector in $ \P V $.

We want to be more precise and to
describe the behavior of random sequences
$ \rho(\gamma_{n} \cdots \gamma_{1}) v $ 
for $ v \in V $.
For that we will assume from now on that
\textbf{$ \rho $ is proximal and strongly irreducible}
(it is not the weakest assumption, but
it will be enough for the applications we
have in mind).

A measure $ \nu $ on $ \P V $
is $ \mu_{\rho}  $-\emph{stationary}
if $ \mu_{\rho}  * \nu = \nu $
where
$ 
\mu_{\rho}  * \nu =
\int_{\Gamma} 
\rho(\gamma)_{*} \nu
d\mu(\gamma)
$.
Such a measure
always exists
and
under the proximal
and strongly irreducible
assumption
it is unique
(see for example
\cite[Prop. 4.7]{benoistRandomWalksReductive2016}).
We denote it by $ \nu_{\rho} $.
Its support is precisely the limit set
$ L(\rho) $ of $ \rho(\Gamma) $.
It is a proper measure:
every linear subspace of $ \P V $
which is not $ \P V $ is given
zero mass.
The stationary measure describes
the distribution of the sequences
$ (\rho(\gamma_{n} \cdots \gamma_{1}) v) $;
more precisely, for $ \nu_{\rho} $-almost
every $ x \in \P V $,
the sequence
$ (\rho(\gamma_{n} \cdots \gamma_{1}) x) $
equidistributes according to $ \nu_{\rho} $.

The stationary measure allows us
to define the \emph{Furstenberg limit map}
$ \theta_{\rho} : \g{incr} \to \P V $,
a measurable map
which satisfies the following properties:
\begin{itemize}
	\item for
		$ \bar{\mu} $-almost all
		$ b = (\gamma_{n}) \in \g{incr} $
		we have
		\begin{equation}
			\rho(\gamma_{1} \cdots \gamma_{n})_{*} 
			\nu_{\rho} 
			\to_{n \to \infty} 
			\delta_{\theta_{\rho}(b)} 
		\end{equation}
	\item $ (\theta_{\rho })_{*} \bar{\mu}
		= \nu_{\rho} $ 
	\item for $ \bar{\mu} $-almost all
		$ b = (\gamma_{n}) \in \g{incr} $,
		any endomorphism $ \pi $
		which is a limit point
		of a sequence
		$ a_{n} \rho(\gamma_{1} \dots \gamma_{n}) $ 
		with $ a_{n} \in \C $
		is of rank 1 and satisfies
		$ Im(\pi) = \theta (b) $.
\end{itemize}
The first property is actually a definition
of the map (\cite[Prop. 4.7]{benoistRandomWalksReductive2016}).
Notice the reversed order of composition
in the properties above.

Dually,
define the
probability $ \mu_{\rho}^{t} $
as
the image of $ \mu_{\rho} $ by the adjoint map
$ g \mapsto g^{t} \in G^{*} = SL(V^{*}) $.
This defines a random walk on $ G^{*} $
which is also strongly irreducible and proximal,
and we call it the adjoint random walk.
There exists a unique
$ \mu_{\rho}^{t} $-stationary
measure $ \nu_{\rho}^{t} $ on $ \P V^{*} $
for the adjoint random walk,
and a limit map
$ \theta^{t} : \g{incr} \to \P V^{*} $
which satisfies:
for $ \bar{\mu} $ almost every
$ \xi = (\gamma_{n}) \in \g{incr} $,
any endomorphism $ \pi $
which is a limit point
of a sequence
$ a_{n} \rho(\gamma_{n} \dots \gamma_{1}) $ 
with $ a_{n} \in \C $
is of rank 1 and satisfies
$ \ker (\pi) = \theta^{t} (\xi) $
(again, notice the order of composition).
The adjoint limit map allows us to describe
the growth rate of individual vector:
\begin{propo}
	[\cite{benoistRandomWalksReductive2016}, Th. 4.8]
	For $ \bar{\mu} $-almost every
	$ \xi=(\gamma_{n}) \in \g{incr} $,
	for all $ v \in \P V $
	such that $ v \notin \theta^{t} (\xi) $
	we have
	\begin{equation}
		\lim
		_{n \to \infty}
		\frac{1}{n}
		\log
		\norm{
			\rho(
			\gamma_{n}
			\cdots
			\gamma_{1}
			)
			v
		}
		=
		\chi_{1} (\rho)
		.
	\end{equation}
	It implies that for every $ v \in V $
	and for $ \bar{\mu} $-almost every
	$ \xi \in \P $, the limit above holds.
\end{propo}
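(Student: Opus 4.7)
The plan is to derive the upper bound from submultiplicativity and Furstenberg--Kesten, and to obtain the matching lower bound from a compactness argument exploiting the structure of limit points of the normalized products $g_n/\norm{g_n}$ supplied by $\theta^t$.

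Set $g_n := \rho(\gamma_n \cdots \gamma_1)$ and fix a norm on $V$. Since $\norm{g_n v} \le \norm{g_n} \norm{v}$ and $\frac{1}{n} \log \norm{g_n} \to \chi_1(\rho)$ almost surely by Furstenberg--Kesten, one immediately obtains $\limsup_n \frac{1}{n} \log \norm{g_n v} \le \chi_1(\rho)$. The main work is the matching lower bound.

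Let $\Omega \subset \g{incr}$ be the full-measure set on which both Furstenberg--Kesten holds and the third characterizing property of $\theta^t$ holds. Fix $\xi = (\gamma_n) \in \Omega$ and any subsequence $(n_k)$. The operators $\pi_{n_k} := g_{n_k}/\norm{g_{n_k}}$ have norm $1$, so up to extracting a further subsequence $(n_{k_j})$ they converge to some endomorphism $\pi$ of norm $1$. Setting $a_n = 1/\norm{g_n}$, the sequence $a_{n_{k_j}} \rho(\gamma_{n_{k_j}} \cdots \gamma_1)$ then converges to $\pi$, so the defining property of $\theta^t$ forces $\pi$ to be of rank one with $\ker \pi = \theta^t(\xi)$. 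For any $v \notin \theta^t(\xi)$, this gives $\pi(v) \ne 0$ and hence $\norm{g_{n_{k_j}} v}/\norm{g_{n_{k_j}}} \to \norm{\pi(v)} > 0$. Taking logarithms, dividing by $n_{k_j}$, and combining with Furstenberg--Kesten yields $\frac{1}{n_{k_j}} \log \norm{g_{n_{k_j}} v} \to \chi_1(\rho)$. Since every subsequence admits such a further subsequence with the same limit, the full sequence converges to $\chi_1(\rho)$. Crucially, the null set $\g{incr} \setminus \Omega$ does not depend on $v$, so this holds for all $v \notin \theta^t(\xi)$ simultaneously on $\Omega$.

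For the last assertion, fix $v \in V \setminus \{0\}$. The event that the line $\C v$ lies in the hyperplane $\theta^t(\xi)$ is $(\theta^t)^{-1}(H_v)$, where $H_v \subset \P V^*$ is the hyperplane of linear forms vanishing on $v$. Since $(\theta^t)_* \bar{\mu} = \nu_\rho^t$ and $\nu_\rho^t$ is a proper measure on $\P V^*$, this event has $\bar{\mu}$-measure zero, and the limit holds $\bar{\mu}$-almost surely. The only mildly delicate point is the uniformity in $v$ in the first statement; once one observes that every subsequential limit of $g_n/\norm{g_n}$ shares the same kernel $\theta^t(\xi)$, this uniformity is automatic.
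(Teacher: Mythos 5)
Your proof is correct. Note that the paper itself gives no argument here — the proposition is quoted from Benoist--Quint (Th.~4.8) — and your derivation is essentially the standard one behind that reference: the upper bound from submultiplicativity plus Furstenberg--Kesten, and the lower bound by extracting limit points of $g_n/\norm{g_n}$ and invoking the stated property of $\theta^{t}$ that every such (nonzero) limit is rank one with kernel $\theta^{t}(\xi)$, which also gives the uniformity in $v$ since the exceptional null set is independent of $v$. The only facts you use beyond what the paper lists explicitly, namely $(\theta^{t})_{*}\bar{\mu}=\nu_{\rho}^{t}$ and the properness of $\nu_{\rho}^{t}$, follow from the adjoint walk being strongly irreducible and proximal, exactly as for $\nu_{\rho}$, so the final almost-sure statement for a fixed $v$ is also justified.
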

We see that the maximum growth rate
is attained by almost every path
of the random process.
This result can be used to prove the
Furstenberg's formula:
(\cite[Th. 48]{benoistRandomWalksReductive2016}):
\begin{equation}
	\int_{\Gamma}
	\int_{\P V}
	\log
	\frac{
		\norm{
			\rho(\gamma)v
		}}{
		\norm{v}
	}
	d \mu (\gamma)
	d \nu_{\rho} (v)
	=
	\chi_{1} (\rho)
	.
\end{equation}

We will need the
following
result which follows from
the average contraction
of the random walk
(quantified by $ \chi_{2} - \chi_{1} < 0 $).
Let $ P_{\rho} $ be the transition operator
defined for a continuous function $ f $
on $ \P V $ 
by:
\begin{equation}
	P_{\rho} f(x)
	=
	\int
	f(\rho(\gamma)x)
	d \mu (\gamma)
	.
\end{equation}
By results of \cite{lepageTheoremesLimitesPour1980},
there exists $ C >0 $,
$ \alpha \in (0,1) $
and $ \beta \in (0,1) $
such that
we have
\begin{equation}
	\norm{
		P_{\rho} ^{n} f
		-
		\int f d \nu_{\rho} 
	}
	_{C^{\alpha} } 
	\le
	C
	\beta^{n} 
	\norm{f}
	_{C^{\alpha} } 
	,
\end{equation}
The norm is the standard
Holder $ C^{\alpha} $
norm.
This result is called
the \emph{exponential contraction of the transition operator}.
It is a deep result in the theory which
implies many limit laws for the random walk.
It assumes an exponential moment for the random walk.

\subsubsection{Dual random walk}
\label{ssub:dual_random_walk}

A important theme in the present work
is the interplay between
the random walk and the dual
random walk.

Given, as in the previous paragraphs,
a probability 
$
\mu
$
on
$
\Gamma
$
and 
$
\rho
$
a linear representation
$
\Gamma \to G
$
the \emph{dual random walk}
is
the random walk on
$
G^{*} = SL(V^{*} )
$
induced
by the probability
$
\mu
$
and the representation 
$
\rho^{*} (\gamma)
=
\rho(\gamma^{-1})^{t} 
$
(remark that it is different
from the adjoint random walk
defined earlier).
The representation 
$
\rho^{*} 
$
is unbounded/irreducible/strongly irreducible/proximal if 
$
\rho
$
is.

The Lyapunov exponent
$
(
\chi^{*}_{i} 
)
$
of the dual random walk
satisfy
\begin{equation}
	\chi^{*}_{i} 
	=
	-
	\chi_{d + 2 - i} 
	.
\end{equation}

\subsection{Complex analysis}
\label{sub:reminder_of_complex_analysis}

We record here the standard notions
of complex analysis needed in this article.
A reference for all the facts
stated is \cite{demaillyComplexAnalyticDifferential}.
We fix a connected
complex manifold $ \Lambda $
of dimension $ N $.

A function
$ f : \Lambda \to
\R \cup \left\{ -\infty \right\} $
is \emph{plurisubharmonic}
if it is upper semicontinous
and if $ f \circ L $ is subharmonic
for every
holomorphic map
$ L: \D \to \Lambda $.
It is \emph{plurisurharmonic}
if $ -f $ is plurisubharmonic,
and \emph{pluriharmonic}
if it is plurisubharmonic and
plurisurharmonic.

Recall that a current $ T $
of degre $ p $
(and dimension $ 2N - p $)
on $ \Lambda $
is a continuous linear
form on the space
of differential forms of
degree $ N - p $
with compact support
(it is locally a differential
form of degree $ p $
with distribution coefficients).
The usual operations on forms
extend by duality on currents.
A current is of bidegree
$ (p,q) $
if it acts trivially
on forms of bidegree
different than
$ (N-p, N-q) $.
A current of bidegree 
$ (p,p) $
is \emph{positive}
if its local coefficients
are positive measure.
The main examples of positive current
are given by integration over an
analytic set:
by a theorem of Lelong,
if $ A $ is an analytic
set of pure dimension $ p $
we can define
the current of integration
over $ A $ by
\begin{equation}
	[A] \cdot \alpha
	=
	\int
	_{A_{reg} }
	\alpha
	,
\end{equation}
for $ \alpha $
a form of bidegree $ (p,p) $
with compact support,
where $ A_{reg} $
is the regular part of $ A $;
this current is well-defined
and is a positive current.

We denote by $ d $ the exterior derivative
on the differential forms on $ \Lambda $.
The complex structure induces
the decomposition $ d = d' + d'' $
where $ d' $ increases the bidegree
by $ (1,0) $ and $ d'' $ by $ (0,1) $.
We will use the real operator
$ d^{c} = \frac{1}{2 i \pi}(d' - d'') $.
It satisfies
$ dd^{c} = \frac{1}{i \pi} d' d'' $.
We have the following characterization
of plurisubharmoniticity:
an uppersemicontinuous function
$ f $
(which is not identically
$ -\infty $)
is plurisubharmonic if
and only if
it is locally $ L^{1} $
and
$ dd^{c} f $ is a positive
current
(it is always of bidegree $ (1,1) $).
It implies that a function
is pluriharmonic if and only if
$ dd^{c} f = 0 $ in the sense
of currents.

An important class of examples
of plurisubharmonic functions
is given by the Poincaré-Lelong
formula:
if $ f $ is a holomorphic
function which is not identically
zero
then $ \log \abs{f} $
is plurisubharmonic,
pluriharmonic outside
$ f^{-1} (0) $
and
\begin{equation}
	dd^{c} 
	\log \abs{f}
	=
	\sum
	m_{i} [Z_{i}]
	,
\end{equation}
where the $ Z_{i} $ 
are the irreducible components
of $ f^{-1} (0) $ and
the $ m_{i} $ are their
multiplicities.

Plurisubharmonic functions
are convenient to work with
because of their compactness
properties.
A version of
Hartog's Lemma
(see \cite{hormanderAnalysisLinearPartial2005})
states that
if $ (f_{n}) $ is a sequence
of plurisubharmonic functions
locally bounded above,
then either $ (f_{n}) $
converges uniformly to
$ -\infty $
or we can extract a sequence
$ (f_{n_{k} }) $ 
converging in $ L^{1}_{loc} $
to a plurisubharmonic function $ f $,
and  we have
$ \limsup f_{n_{k}}(x) = f(x) $
almost everywhere.
In particular,
if $ (f_{n}) $ is a sequence of
plurisubharmonic functions locally bounded
above and converging pointwise almost everywhere
to a function $ f $, then $ f $ is plurisubharmonic.
In the same way,
if $ (f_{n}) $ is a sequence of
\emph{pluriharmonic} functions locally bounded
and converging pointwise almost everywhere
to a function $ f $, then $ f $ is \emph{pluriharmonic}.

Plurisubharmonic functions are
subharmonic and therefore
satisfy the \emph{maximum principle}:
if a plurisubharmonic function
reaches its maximum
on an open set, it is
constant on this set.

\subsection{Family of representations}
\label{sub:family_of_representations}

We fix a connected 
complex manifold $ \Lambda $,
the parameter space.
Let
$ (\rho_{\lambda})
_{\lambda \in \Lambda}   $
be a holomorphic family
of representations
$ \Gamma \to G $.
It means that for all $\gamma$,
the map
$ \lambda \mapsto \rho_{\lambda} (\gamma)$ 
is holomorphic.

We consider the Lyapunov
exponents as functions on $ \Lambda $:
\begin{equation}
	\chi_{i} :
	\lambda
	\mapsto
	\chi_{i} ( \rho ( \lambda ) )
	.
\end{equation}
The functions
$ \chi_{1} + \dots + \chi_{i} $
are plurisubharmonic.
Indeed, we can use the norm
$ \norm{g}_{2}
=
\sqrt{ \sum g_{ij}^{2}}$
to define $ \chi $ and it is clear
that
\begin{equation}
	\frac{1}{n}
	\int_{\Gamma}
		\log
		\norm{ \wedge^{i} \rho ( \gamma ) }_{2}
		d \mu^{n} (\gamma)
\end{equation}
is a sequence of plurisubharmonic maps,
bounded above by the assumption of first moment,
converging pointwise to $ \chi_{1} + \dots + \chi_{i} $.

If at a parameter $ \lambda $ 
the representation $ \rho_{\lambda} $
is strongly irreducible and proximal,
we denote by
$ \nu_{\lambda} $
and $ \nu_{\lambda}^{*}  $ 
the associated stationary
measures
and by
$ \theta_{\lambda} $
and $ \theta_{\lambda}^{*} $
the associated limit maps.
We also denote by $ L(\lambda) $
its limit set.

We introduce a notion
of dynamical stability
for families of representations,
which is the main topic of this paper.
\begin{definition}
\label{def:prox_stable}
	The family of representations
	$ (\rho_{\lambda}) $
	is
	\emph{proximally stable}
	if for any $ \lambda_{0} \in \Lambda $
	and $ \gamma \in \Gamma $,
	if $ \rho_{\lambda_{0} } (\gamma) $
	is proximal
	then 
	$ \rho_{\lambda } (\gamma) $
	is proximal for all $ \lambda \in \Lambda $.
\end{definition}
In this case,
the fact that
$ \rho_{\lambda} (\gamma) $
is proximal is independent
of $ \lambda $,
and if it is the case
we say that $ \gamma $
is proximal.
If $ \gamma $ is proximal
it is clear that
$ \lambda \mapsto Fix^{+} \rho_{\lambda} (\gamma) $ 
and
$ \lambda \mapsto Fix^{-} \rho_{\lambda} (\gamma) $ 
are holomorphic,
and that the
logarithm of the
spectral radius
of $ \rho_{\lambda} (\gamma) $
is a pluriharmonic function
of $ \lambda $.
When the context is clear, we denote
these maps
(or their graph) by
$ Fix^{+} \gamma $
and 
$ Fix^{-} \gamma $.

We call
\emph{stability locus}
the largest 
subset of $ \Lambda $
on which $ (\rho_{\lambda}) $
is proximally stable.
Its complement is called
the \emph{bifurcation locus}.

We define currents
on $ \Lambda $ by
$ T_{i} = dd^{c} \chi_{i} $,
for $ i=1, \dots, d+1 $.
The complement of the
support of $ T_{i} $
is the largest open set
on which $ \chi_{i} $
is pluriharmonic.
Finally we set
$ T_{bif} = T_{1} + T_{d+1} $.

\section{Results}
\label{sec:statement}

From now on
we will assume the standing assumptions:
\begin{center}
	\emph{for all $ \lambda \in \Lambda $,
	      the representation
	      $ \rho_{\lambda} $ is
	      strongly irreducible
	      and proximal}
\end{center}
Our main result is:
\begin{theorem}
\label{th:main}
	Under the standing
	assumption,
	the Lyapunov exponents
	$ \chi_{1} $ and $ \chi_{d+1} $
	are pluriharmonic
	on $ \Lambda $
	if and only if
	$ (\rho_{\lambda}) $ 
	is proximally stable.
\end{theorem}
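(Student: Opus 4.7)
I would prove the two implications separately, using Berteloot--Bianchi--Dupont \cite{bertelootDynamicalStabilityLyapunov2018} as a template for the pluripotential-theoretic machinery. The direction from proximal stability to pluriharmonicity is the easier one and is achieved by exhibiting $\chi_1$ and $\chi_{d+1}$ as $L^1_{\mathrm{loc}}$-limits of explicit pluriharmonic functions; the reverse implication is considerably more delicate and carries the bulk of the work.

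\textbf{Proximal stability implies pluriharmonicity of $\chi_1$ and $\chi_{d+1}$.} The plan rests on the identity
\begin{equation*}
	\chi_1(\lambda) \;=\; \lim_{n \to \infty} \frac{1}{n} \int_\Gamma \log r\bigl(\rho_\lambda(\gamma)\bigr)\, d\mu^n(\gamma),
\end{equation*}
where $r(\cdot)$ denotes the spectral radius. Under the standing assumption this follows from Furstenberg--Kesten together with the fact that $\mu^n$-almost every $\gamma$ is proximal once $n$ is large (a consequence of the random products being asymptotically rank one), and an $L^1$-domination supplied by the exponential moment on $\mu$. Under proximal stability, the set $P \subset \Gamma$ of proximal elements is independent of $\lambda$, and for each $\gamma \in P$ the leading eigenvalue varies holomorphically in $\lambda$, so $\lambda \mapsto \log r(\rho_\lambda(\gamma))$ is pluriharmonic on the whole of $\Lambda$. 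The partial averages $h_n(\lambda) := \frac{1}{n}\int_P \log r(\rho_\lambda(\gamma))\, d\mu^n(\gamma)$ are then pluriharmonic as integrals of pluriharmonic functions against a probability measure, while the remainder on $\Gamma \setminus P$ carries vanishing $\mu^n$-mass as $n \to \infty$. The $L^1_{\mathrm{loc}}$ form of Hartogs' lemma, applied with local upper bounds coming from the first moment, then yields pluriharmonicity of the limit $\chi_1$. Repeating the argument with the dual random walk gives pluriharmonicity of $\chi_1^* = -\chi_{d+1}$, hence of $\chi_{d+1}$.

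\textbf{Pluriharmonicity implies proximal stability.} Fix $\gamma \in \Gamma$ and assume $\rho_{\lambda_0}(\gamma)$ is proximal. The locus $U_\gamma := \{\lambda \in \Lambda : \rho_\lambda(\gamma) \text{ is proximal}\}$ is open and non-empty; the aim is to prove $U_\gamma = \Lambda$. On $U_\gamma$, $\lambda \mapsto \log r(\rho_\lambda(\gamma))$ is pluriharmonic, and the plurisubharmonic sequence $\phi_n(\lambda) := \frac{1}{n} \log \norm{\rho_\lambda(\gamma)^n}$ converges pointwise to it. The plan is to leverage pluriharmonicity of $\chi_1$ (resp.\ $\chi_{d+1}$) to produce a continuous, in fact holomorphic-type, motion of the attracting fixed point $Fix^+ \rho_\lambda(\gamma)$ (resp.\ the repelling hyperplane $Fix^- \rho_\lambda(\gamma)$) across the boundary $\partial U_\gamma$. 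The natural tool is the exponential contraction of the transition operator from \cite{lepageTheoremesLimitesPour1980}, applied uniformly in $\lambda$ on compact subsets of $\Lambda$: combined with the vanishing $dd^c \chi_1 = 0$ it should enforce continuous (pluripotential-theoretic) dependence of the Furstenberg limit map $\theta_\lambda$ and of the stationary measure $\nu_\lambda$, and dually using $dd^c \chi_{d+1} = 0$ of $\theta_\lambda^*$ and $\nu_\lambda^*$. Extending these continuous motions across $\partial U_\gamma$ delivers continuous limits of $Fix^\pm \rho_\lambda(\gamma)$ on the boundary, which prevents a collision of the top two eigenvalues and forces $U_\gamma = \Lambda$.

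\textbf{Main obstacle.} The principal difficulty is in the reverse implication: passing from pluriharmonicity of the averaged statistical quantities $\chi_1, \chi_{d+1}$ to the element-by-element persistence of proximality, which is a pointwise algebraic property of individual group elements. Producing the continuous motion of $Fix^\pm \rho_\lambda(\gamma)$ across the boundary $\partial U_\gamma$, precisely where the spectral gap of $\rho_\lambda(\gamma)$ degenerates, is the crux of the argument. Unlike the endomorphism setting of \cite{bertelootDynamicalStabilityLyapunov2018}, where critical dynamics furnishes a rigid structural backbone, here the analogous role must be played jointly by the stationary measures $\nu_\lambda, \nu_\lambda^*$ and the Furstenberg limit maps; coupling their continuity in $\lambda$---which one expects to extract from the exponential contraction of the transition operator and the pluripotential constraints $dd^c \chi_1 = dd^c \chi_{d+1} = 0$---with a boundary regularity argument at $\partial U_\gamma$ is, I expect, where the bulk of the technical work will concentrate.
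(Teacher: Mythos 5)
Your first implication is essentially sound and close in spirit to the paper's argument: the paper also writes $\chi_1$ as a limit of the pluriharmonic functions $\frac1n\log r(\rho_\lambda(\cdot))$ and invokes Hartogs, the only difference being that it works trajectory-wise (fixing a $\bar\mu$-generic sequence of increments and comparing $\frac1n\log\norm{\rho_\lambda(\gamma^{(n)})}$ with $\frac1n\log r(\rho_\lambda(\gamma^{(n)}))$ via a Hartogs-type comparison lemma from Deroin--Dujardin) rather than averaging over $\mu^n$; in your averaged version you would still need to justify that the contribution of the non-proximal part $\Gamma\setminus P$ to the integral, not merely its $\mu^n$-mass, tends to zero, which takes a large-deviation plus moment estimate. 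The dual-walk step for $\chi_{d+1}$ is exactly as in the paper.

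The reverse implication, however, has a genuine gap, and it sits exactly where you locate the ``main obstacle.'' Your proposed mechanism --- uniform spectral gap of the transition operator plus $dd^c\chi_1=dd^c\chi_{d+1}=0$ giving \emph{continuous} dependence of $\nu_\lambda$, $\nu_\lambda^*$ and of the limit maps, then a boundary-regularity argument at $\partial U_\gamma$ --- cannot work as stated: under the standing assumptions the stationary measures and limit maps already depend continuously on $\lambda$ whether or not the exponents are pluriharmonic, so continuity alone cannot distinguish stable from bifurcating families; and even a continuous motion of $Fix^{\pm}\rho_\lambda(\gamma)$ up to $\partial U_\gamma$ does not prevent the top two eigenvalues from colliding in modulus there. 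What pluriharmonicity actually buys in the paper is \emph{holomorphic}, not continuous, structure, obtained by a completely different route: $dd^c\chi_1=0$ plus the exponential contraction of the transition operator gives a uniform bound on the expected volume of the graphs $\lambda\mapsto\rho_\lambda(\gamma^{(n)})\sigma_0$; Bishop's compactness theorem then produces a holomorphic variation of the Poisson boundary $\theta(\xi,\lambda)$, and dually (using $dd^c\chi_{d+1}=0$) a variation $\theta^*$ into $\P V^*$; Kaimanovich's double ergodicity yields an almost-sure non-intersection between the graphs of $\theta$ and the hyperplane graphs of $\theta^*$; avoidance of systems of hyperplane graphs in general position gives normality (compactness) of the family of graphs; and finally a cluster-value/analytic-continuation argument inside this compact transverse family shows that the attracting fixed point and repelling hyperplane of a proximal $\rho_{\lambda_0}(\gamma)$ extend to global graphs of fixed data, with proximality persisting either by a generic-point argument in the support of $\nu_\lambda$ or by comparing the harmonic $\log$ of the top eigenvalue with the subharmonic $\log$ of the spectral radius on the fixed hyperplane and applying the minimum principle. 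None of these ingredients (volume bounds, Bishop compactness, double ergodicity, the hyperplane-avoidance normality criterion, or the final eigenvalue comparison) appears in your sketch, so the reverse implication as proposed remains unproven.
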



When the probability
$ \mu $ is symmetric
(that is
$ \mu ( \gamma )
= \mu ( \gamma^{-1} )$ 
for all $ \gamma \in \Gamma $)
the Lyapunov spectrum is symmetric:
$ \chi_{i} = - \chi_{d+2-i} $.
In this case $ \chi_{1} $
is pluriharmonic if and only if $ \chi_{d+1} $
is, so we can state:
\begin{corollary}
	If $ \mu $ is symmetric,
	under the standing assumption,
	$ \chi_{1} $ is pluriharmonic
	if and only if
	$ (\rho_{\lambda}) $ 
	is proximally stable.
\end{corollary}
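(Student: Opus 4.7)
The plan is to reduce the corollary directly to Theorem \ref{th:main} by means of the spectrum symmetry recorded in the paragraph preceding it. The only substantive thing to verify is the pointwise identity $\chi_{d+1}(\lambda) = -\chi_1(\lambda)$ on $\Lambda$; once this is in hand, the equivalence follows immediately.

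For the symmetry identity I would argue as follows. Section \ref{ssub:dual_random_walk} records that the Lyapunov exponents $\chi_i^*$ of the dual random walk always satisfy $\chi_i^* = -\chi_{d+2-i}$, regardless of symmetry of $\mu$. When $\mu$ is symmetric, the law of an i.i.d. tuple $(\gamma_1, \ldots, \gamma_n)$ coincides with that of $(\gamma_n^{-1}, \ldots, \gamma_1^{-1})$, and since the operator norm is invariant under transposition we obtain the equality in distribution
\begin{equation}
\norm{\rho^*(\gamma_n \cdots \gamma_1)} = \norm{\rho(\gamma_1^{-1} \cdots \gamma_n^{-1})} \stackrel{d}{=} \norm{\rho(\gamma_n \cdots \gamma_1)}.
\end{equation}
Applying the Furstenberg--Kesten theorem to $\rho$ and to $\rho^*$ (and repeating the argument for each exterior power $\wedge^i \rho$) yields $\chi_1^* + \cdots + \chi_i^* = \chi_1 + \cdots + \chi_i$ for every $i$, whence $\chi_i^* = \chi_i$. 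Combined with $\chi_i^* = -\chi_{d+2-i}$ this produces $\chi_i = -\chi_{d+2-i}$, and in particular $\chi_{d+1} = -\chi_1$.

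From here the corollary is one line. Since $dd^c(-f) = -dd^c f$, pluriharmonicity is preserved under change of sign, so $\chi_{d+1}$ is pluriharmonic on $\Lambda$ if and only if $\chi_1$ is. Theorem \ref{th:main} then asserts that proximal stability of $(\rho_\lambda)$ is equivalent to the simultaneous pluriharmonicity of $\chi_1$ and $\chi_{d+1}$, which under symmetry of $\mu$ collapses to pluriharmonicity of $\chi_1$ alone. I do not foresee any real obstacle: the whole content is concentrated in Theorem \ref{th:main}, and the main pitfall is simply a bookkeeping slip in the order of composition and inversion when comparing the original and dual walks.
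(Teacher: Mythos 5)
Your argument is correct and follows the same route as the paper: the corollary is deduced from Theorem \ref{th:main} via the spectrum symmetry $\chi_i = -\chi_{d+2-i}$ for symmetric $\mu$, which is exactly the remark the paper states just before the corollary. The only difference is that you supply a proof of that symmetry (via the dual walk identity $\chi_i^* = -\chi_{d+2-i}$ and the equality in law of the forward and inverse products), a fact the paper asserts without detail; your verification of it is sound.
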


\begin{Remark}
	We can state this theorem
	in term of the bifurcation
	current $ T_{bif} $:
	\begin{center}
		\emph{Under the standing
			assumptions,
			$ T_{bif} $
			vanishes
			on $ \Lambda $
			if and only if
			$ (\rho_{\lambda}) $ 
		is proximally stable,}
	\end{center}
	or in term of the bifurcation
	locus
	\begin{center}
		\emph{Under the standing assumptions,
			the support of $ T_{bif} $ is precisely
		the bifurcation locus.}
	\end{center}
\end{Remark}

We then show that
for proximally stable families
we can propagate some interesting
properties which are true at one
parameter, to every parameter.

\begin{theorem}
	\label{th:progation}
	Under the standing assumption,
	suppose that $ (\rho_{\lambda}) $
	is proximally stable.
	Let $ P_{\lambda}  $
	be one of the following
	property:
	\begin{enumerate}
		\item the representation
			$ \rho_{\lambda} $ 
			is faithful,
		\item the representation
			$ \rho_{\lambda} $ 
			is discrete,
		\item the representation
			$ \rho_{\lambda} $ 
			is projectively Anosov.
	\end{enumerate}
	If $ P_{\lambda_{0} } $ holds
	for some $ \lambda_{0} \in \Lambda $
	then $ P_{\lambda} $ 
	for every $ \lambda \in \Lambda $.
\end{theorem}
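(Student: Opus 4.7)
The plan is to exploit a common mechanism for all three propagation statements, then add specific ingredients for each. Proximal stability, together with the remark following Definition~\ref{def:prox_stable}, means that whenever $\gamma \in \Gamma$ is proximal at one (equivalently, every) parameter, the maps $Fix^{+}\gamma : \Lambda \to \P V$ and $Fix^{-}\gamma : \Lambda \to \P V^{*}$ are holomorphic on $\Lambda$ and the logarithm of the spectral radius of $\rho_{\lambda}(\gamma)$ is pluriharmonic. Strong irreducibility and proximality of $\rho_{\lambda_{0}}$ further guarantee that $\{Fix^{+}\gamma(\lambda_{0}) : \gamma \text{ proximal}\}$ is dense in $L(\lambda_{0})$ and hence Zariski dense in $\P V$. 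For each property I turn a failure of $P_{\lambda}$ away from $\lambda_{0}$ into a violation of one of these holomorphic/pluriharmonic properties or of preservation of (non-)proximality.

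For (1) Faithfulness, I argue by contradiction: suppose $\gamma_{0} \in \Gamma \setminus \{e\}$ satisfies $\rho_{\lambda_{1}}(\gamma_{0}) = I$. The group-theoretic ingredient I invoke is that the normal closure $N = \langle\langle \gamma_{0} \rangle\rangle$ has the property that $\rho_{\lambda_{0}}(N)$ contains a proximal element of $SL(V)$. This follows because the Zariski closure of $\rho_{\lambda_{0}}(\Gamma)$ is reductive with strongly irreducible and proximal action on $V$, so any non-trivial normal subgroup still acts in a strongly irreducible and proximal fashion (reducing modulo the centre of $SL(V)$, using a commutator $\eta \delta \eta^{-1} \delta^{-1} \in N$ with well-chosen proximal $\delta$ and $\eta$, in case $\rho_{\lambda_{0}}(\gamma_{0})$ is a non-trivial scalar). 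Picking $\eta \in N$ with $\rho_{\lambda_{0}}(\eta)$ proximal, proximal stability forces $\rho_{\lambda_{1}}(\eta)$ proximal, but $\eta \in \ker \rho_{\lambda_{1}}$ yields $\rho_{\lambda_{1}}(\eta) = I$, which is not proximal: contradiction.

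For (2) Discreteness, I first use (1) to assume $\rho_{\lambda}$ faithful for all $\lambda$. Supposing $\rho_{\lambda_{1}}$ non-discrete, I pick distinct $\gamma_{n} \in \Gamma$ with $\rho_{\lambda_{1}}(\gamma_{n})$ bounded; faithfulness forces $\gamma_{n} \to \infty$ in $\Gamma$, and discreteness at $\lambda_{0}$ then forces $\rho_{\lambda_{0}}(\gamma_{n}) \to \infty$ in $G$. Proximality of $\rho_{\lambda_{0}}(\Gamma)$ allows one to extract a subsequence along which $a_{n} \rho_{\lambda_{0}}(\gamma_{n})$ converges to a rank-one endomorphism, so $\rho_{\lambda_{0}}(\gamma_{n})$ is proximal for $n$ large, and hence so is $\rho_{\lambda_{1}}(\gamma_{n})$ by proximal stability. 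The pluriharmonic functions $h_{n}(\lambda) = \log$ of the spectral radius of $\rho_{\lambda}(\gamma_{n})$ satisfy $h_{n}(\lambda_{0}) \to +\infty$ while $h_{n}(\lambda_{1})$ stays bounded; renormalising by $h_{n}(\lambda_{0})$ and applying the Hartogs-type compactness for pluriharmonic functions yields a pluriharmonic limit taking the value $1$ at $\lambda_{0}$ and $0$ at $\lambda_{1}$, squeezed between $0$ and the normalised plurisubharmonic bound $\log \|\rho_{\lambda}(\gamma_{n})\| / h_{n}(\lambda_{0})$, contradicting the maximum principle.

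For (3) Projectively Anosov, the main obstacle is of a different nature. The Anosov property is an open condition, so $A = \{\lambda : \rho_{\lambda} \text{ is projectively Anosov}\}$ is open and contains $\lambda_{0}$; I want to show $A$ is also closed, hence $A = \Lambda$ by connectedness. For $\lambda \in A$ there are continuous equivariant maps $\xi_{\lambda} : \partial \Gamma \to \P V$ and $\xi_{\lambda}^{*} : \partial \Gamma \to \P V^{*}$ with $\xi_{\lambda}(\gamma^{+}) = Fix^{+} \rho_{\lambda}(\gamma)$ for every infinite-order $\gamma \in \Gamma$; proximal stability gives that each map $\lambda \mapsto \xi_{\lambda}(\gamma^{+})$ extends holomorphically to all of $\Lambda$. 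The hard part will be extending $\xi_{\lambda}$ from this countable dense subset of $\partial \Gamma$ to a genuinely continuous equivariant map at a boundary point $\lambda^{*} \in \overline{A}$, which requires uniform equicontinuity of $(\xi_{\lambda})_{\lambda}$ as $\lambda \to \lambda^{*}$; I would obtain this from the exponential contraction of the transition operator recalled in Subsection~\ref{ssub:rw_lin}, whose constants depend continuously on $\lambda$ so long as strong irreducibility and proximality persist. Once the limit maps exist at $\lambda^{*}$, transversality (holomorphic in $\lambda$) and uniform contraction pass to the limit and yield $\lambda^{*} \in A$.
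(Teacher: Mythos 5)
Your overall strategy differs from the paper's in a crucial way: you try to run all three propagations using only the raw definition of proximal stability (``proximality of $\rho_\lambda(\gamma)$ is independent of $\lambda$''), whereas the paper first passes through Theorem \ref{th:main} to the harmonicity of $\chi_1,\chi_{d+1}$ and then uses the compact, transverse families of graphs $\bar\Theta,\bar\Theta^{*}$ of Proposition \ref{prop:summarize} together with the dichotomy of Lemma \ref{lem:intersection}. The extra strength of that machinery is exactly what your argument is missing, and each of your three parts has a genuine gap. For (1), the group-theoretic ingredient is false: it is not true that a non-central normal subgroup of a strongly irreducible, proximal linear group must contain a proximal element. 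Take $\Gamma=F_2\times F_2$ and $\rho_{\lambda_0}$ the tensor product of two Zariski-dense representations into $SL(2,\C)$, acting on $\C^2\otimes\C^2$: this is strongly irreducible and proximal, but the normal closure of $\gamma_0=(a,1)$ lies in $SL_2\times\{1\}$, all of whose elements have eigenvalues occurring in pairs, hence are never proximal. So no $\eta\in\langle\langle\gamma_0\rangle\rangle$ with $\rho_{\lambda_0}(\eta)$ proximal need exist. Your scalar-case patch is also vacuous: if $\rho_{\lambda_0}(\gamma_0)$ is a nontrivial scalar then every commutator $[\eta,\delta]$ with $\eta$ in the normal closure has trivial image at $\lambda_0$, hence is trivial in $\Gamma$ by faithfulness, so it produces nothing. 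The paper avoids all this by conjugating a doubly-proximal element by the would-be kernel element and using equidistribution on the flag variety plus Lemma \ref{lem:intersection} to make the two graphs of attracting fixed points disjoint, which a relation $\rho_{\lambda_1}(h)=\mathrm{id}$ would violate.

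For (2), two steps fail. Rescaled convergence $a_n\rho_{\lambda_0}(\gamma_n)\to\pi$ with $\pi$ of rank one does not make the $\rho_{\lambda_0}(\gamma_n)$ proximal for large $n$ (unipotent sequences already give rank-one rescaled limits), and in general a divergent sequence in a proximal irreducible group need not have rank-one rescaled limits at all; moreover $\norm{\rho_{\lambda_0}(\gamma_n)}\to\infty$ does not imply that the spectral radii tend to infinity, so your normalization $h_n(\lambda_0)\to+\infty$ is unjustified and the Harnack/minimum-principle squeeze never gets started. (Also, your reduction to the faithful case via (1) is not available, since (2) does not assume faithfulness at $\lambda_0$.) The paper instead conjugates a carefully chosen doubly-proximal $\gamma_0$ by the $h_n$, uses compactness of $\bar\Theta$ to extract a limit graph of the $Fix^{+}(h_n\gamma_0 h_n^{-1})$, and shows via the flag-variety argument and Lemma \ref{lem:intersection} that this limit is disjoint from $Fix^{-}\gamma_0^{-1}$, which is incompatible with $\rho_{\lambda_1}(h_n)\to\mathrm{id}$. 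For (3), your open-closed scheme leaves precisely the hard points as announcements: uniform equicontinuity of the boundary maps $\xi_\lambda$ as $\lambda$ approaches the boundary of the Anosov locus is not a consequence of the spectral gap of the transition operator on $\P V$ (the boundary maps live on $\partial_\infty\Gamma$ and require uniform gaps along geodesics, not along random trajectories), and transversality of the limit maps can degenerate exactly at $\lambda^{*}$; nothing in ``proximal stays proximal'' rules out the incidence $\xi^{+}_{\lambda^*}(\eta)\subset\xi^{-}_{\lambda^*}(\eta')$ appearing in the limit. The paper circumvents the limit procedure altogether: it builds the boundary maps at \emph{every} parameter by following, through the compact family $\bar\Theta$, the unique graph passing through each point of $L(\lambda_0)$, and the either-disjoint-or-contained dichotomy of Lemma \ref{lem:intersection} is what guarantees transversality and injectivity globally. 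In short, without first establishing (or invoking) the graph machinery of Proposition \ref{prop:summarize}, each of your three arguments stalls at its key step.
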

See section \ref{sub:the_anosov_property}
for a definition of the Anosov property.

\subsection{Hypothesis of the theorem}
\label{sub:hypothesis_of_the_theorem}

Let us explain the hypothesis of theorem
\ref{th:main}.
We assume that the representations
of the family are proximal,
otherwise the property of proximal stability
is trivially satisfied.
The assumption of strong irreducibility
is used to apply results from the
theory of random walks. 
In the section \ref{sub:reduction}
we explain how we can weaken
these asumptions,
to prove the "if" part of the main theorem.
More precisely
we can reduce the standing assumptions
of the direct implication 
of theorem \ref{th:main} to:
\begin{center}
	\emph{there exists one parameter $ \lambda_{0} $
		such that $ \rho_{\lambda_{0}}  $ 
		is strongly irreducible and proximal.
	}
\end{center}

\subsection{Generalization}
\label{sub:generalization}

Theorem \ref{th:main}
is a result about proximal actions
of
subgroups of
$ G = SL(V) $ acting on $ \P V $.
It is probable that it can be
extented to subgroups of a general semisimple
complex Lie group acting 
proximaly on
a flag variety.

In this work,
the random walk is used
as an accessory tool
to define the Lyapunov exponents
and to see the group
$ \Gamma $ as a dynamical system.
It is probable that we could
use a different dynamical system
and obtain the same result.
For example, when $ \Gamma $
is the fundamental group
of a compact surface,
we could use the geodesic flow
or the Brownian motion of
an hyperbolic structure
on this surface
as in \cite{deroinLyapunovExponentsSurface2015}.
A more challenging problem would
be to replace the random walk
by a subshift of finite type
(the random walk corresponding
to the full shift),
see \cite{kasselEigenvalueGapsHyperbolic2020}
where this direction is explored
for Anosov representations.


\subsection{Equidistribution}
\label{sub:equidistribution}

The theorem \ref{th:main}
states that the bifurcation locus
coincides with the support
of 
the bifurcation current
$ T_{bif} $.
The interest of this result
is that we can now use tool
from potential theory to describe
the bifurcation locus.
The general machinery
developed in
\cite{deroinRandomWalksKleinian2012}
applies in our setting (with identical proofs)
and we obtain equidistribution results.
More precisely:

Let $ (\rho_{\lambda})_{\lambda \in \Lambda}  $
be a family of representation.
Fix $ t \in C $ 
and
let $ Z(\gamma, t) $ be
the subset of $ \Lambda $
composed of $ \lambda $
such that
$ \tr (\rho_{\lambda}(\gamma)) = t $.
\begin{propo}
	Let $ (\rho_{\lambda}) $
	be a family of representations
	satifsying the standing assumptions.
	Suppose that $ \mu $ is symmetric.
	Then for $ \bar{\mu} $-almost every
	$ (\gamma_{n}) \in \g{incr} $,
	the sequence of currents of integration
	\begin{equation}
		\frac{1}{n}
		\left[ Z(\gamma_{n} \cdots \gamma_{1}, t) \right]
	\end{equation} 
	converges to
	$ T_{bif} $.
\end{propo}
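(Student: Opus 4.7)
The approach follows the potential-theoretic scheme of Deroin--Dujardin, transposed to the higher rank setting. Write $g_n := \gamma_n \cdots \gamma_1$ and $f_n(\lambda) := \tr \rho_\lambda(g_n) - t$; this is a holomorphic function on $\Lambda$, and (for generic $t$) the strong irreducibility and proximality of the family prevent $f_n$ from being identically zero. By the Poincar\'e--Lelong formula,
\begin{equation}
	[Z(g_n, t)] = dd^c \log|f_n|.
\end{equation}
Setting $u_n(\lambda) := \tfrac{1}{n}\log|f_n(\lambda)|$, each $u_n$ is plurisubharmonic on $\Lambda$ and $\tfrac{1}{n}[Z(g_n,t)] = dd^c u_n$. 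By continuity of $dd^c$ on $L^1_{\mathrm{loc}}$-convergent psh sequences, the proposition reduces to showing that $\bar{\mu}$-almost surely the $u_n$ converge in $L^1_{\mathrm{loc}}$ to a psh potential for $T_{bif}$.

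For this convergence I would proceed in two steps. First, a locally uniform upper bound: the trivial inequality $|f_n(\lambda)| \le (d+1)\norm{\rho_\lambda(g_n)} + |t|$ combined with Furstenberg--Kesten and Hartogs' lemma applied to the psh approximants of $\chi_1$ described in Section~\ref{sub:family_of_representations} yields $\limsup_n u_n \le \chi_1$ locally uniformly on $\Lambda$, $\bar{\mu}$-almost surely. Second, a pointwise convergence: for every fixed $\lambda$ in the full-measure proximal locus, and for $\bar{\mu}$-almost every trajectory, the top eigenvalue of $\rho_\lambda(g_n)$ asymptotically dominates, so $|\tr \rho_\lambda(g_n)| \sim e^{n\chi_1(\lambda)}$ and hence $u_n(\lambda) \to \chi_1(\lambda)$. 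Applying this at a countable dense subset of $\Lambda$ via a Fubini argument, together with the Hartogs compactness property of psh sequences locally bounded above, one identifies every $L^1_{\mathrm{loc}}$-cluster point with $\chi_1$, and hence $u_n \to \chi_1$ in $L^1_{\mathrm{loc}}$, $\bar{\mu}$-a.s. Under the symmetry hypothesis on $\mu$, the identity $\chi_i = -\chi_{d+2-i}$ allows one to rewrite the resulting $dd^c u_n \to dd^c \chi_1$ as convergence towards $T_{bif}$ in the current conventions used in Section~\ref{sub:family_of_representations}.

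The main obstacle is the lower tail control required to rule out escape of $u_n$ to $-\infty$ on sets of positive Lebesgue measure near the bifurcation locus. While the Poincar\'e--Lelong formula absorbs isolated zeros of $f_n$ cleanly into $dd^c u_n$, the $L^1_{\mathrm{loc}}$ convergence requires a quantitative bound on the Lebesgue measure (or capacity) of the set $\{\lambda : |f_n(\lambda)| \le e^{-\varepsilon n}\}$. As in Deroin--Dujardin, this estimate is supplied by the exponential contraction of the transition operator for the random walk on $\P V$ recalled at the end of Section~\ref{ssub:rw_lin}, combined with the locally uniform exponential moment inherited by the linear random walk under the standing assumptions. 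Assembling these ingredients, the Deroin--Dujardin equidistribution machinery applies verbatim, yielding the desired $\tfrac{1}{n}[Z(g_n,t)] \to T_{bif}$.
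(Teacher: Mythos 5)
Your overall scheme is the one the paper itself relies on: the paper gives no written proof of this proposition (it asserts that the machinery of Deroin--Dujardin applies with identical proofs), and your reduction via Poincar\'e--Lelong to the a.s.\ $L^1_{loc}$ convergence of $u_n=\frac1n\log\abs{\tr\rho_\lambda(\gamma^{(n)})-t}$ towards $\chi_1$ is exactly that machinery, parallel to what the paper carries out for the spectral radius in Section~\ref{sec:stab_prox_PH}. However, your third paragraph misplaces the difficulty. No capacity or Lebesgue-measure bound on $\{\lambda:\abs{f_n(\lambda)}\le e^{-\varepsilon n}\}$ is needed, and the exponential contraction of the transition operator does not directly supply such a bound; it enters only through the locally uniform estimate of Lemma~\ref{lem:speed_conv}, which feeds the a.s.\ $L^1_{loc}$ convergence of $\frac1n\log\norm{\rho_\lambda(\gamma^{(n)})}$. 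Given that convergence, plus pointwise a.s.\ convergence $u_n(\lambda_k)\to\chi_1(\lambda_k)$ on a countable dense set (which follows from the a.s.\ law of large numbers for the spectral radius and the gap $\chi_1>\chi_2$, so that $\abs{\tr\rho_\lambda(\gamma^{(n)})}$ is comparable to the top eigenvalue and dominates the constant $t$ since $\chi_1>0$), the comparison Lemma~\ref{lem:compared_conv_psh} --- or your own cluster-point argument, using continuity of $\chi_1$ and upper semicontinuity of psh limits --- already forces $u_n\to\chi_1$ in $L^1_{loc}$; escape to $-\infty$ is excluded by convergence at a single dense-set point. So the step you present as the main obstacle is unnecessary, and as formulated it is not justified by the results recalled in the paper.

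The one genuine gap in your write-up is the final identification of the limit with $T_{bif}$. Your argument yields $dd^c u_n\to dd^c\chi_1=T_1$. But when $\mu$ is symmetric one has $\chi_{d+1}=-\chi_1$ identically, so with the paper's literal definition $T_{bif}=dd^c(\chi_1+\chi_{d+1})$ the proposed limit would be the zero current, while with the convention that $T_{d+1}$ is built from the dual exponent $\chi_1^*=-\chi_{d+1}$ one gets $T_{bif}=2\,dd^c\chi_1$ and a factor $2$ must be tracked. Asserting that symmetry ``allows one to rewrite'' $dd^c\chi_1$ as $T_{bif}$ therefore proves nothing as it stands; you need to fix a normalization and carry the constant through (the ambiguity originates in the paper's conventions, but your proof should resolve it rather than inherit it). Relatedly, drop the hedge ``for generic $t$'': the statement fixes an arbitrary $t\in\C$, and all you need is that $\lambda\mapsto\tr\rho_\lambda(\gamma^{(n)})-t$ is not identically zero for large $n$, which follows almost surely from $\frac1n\log r(\rho_{\lambda_0}(\gamma^{(n)}))\to\chi_1(\lambda_0)>0$ at any fixed parameter.
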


\subsection{Outline of the proof}
\label{sec:outline}

The easy implication (proximally stable
implies pluriharmonicity of the Lyapunov exponent)
is proved in section \ref{sec:stab_prox_PH}.

For the reverse implication,
the first step is to prove
the existence of an
\emph{holomorphic variation of
the Poisson boundary},
an object which binds
in a family of holomorphic map
all the boundary map of the
family of representation
(section \ref{sub:hol_poisson}).
To construct this variation,
we need a result which allows us
to control the volume growth
of the trajectories of the random
walk applied to certain graphs,
in term of the bifurcation current.
(section \ref{sub:vol_bif}).
Then we show that the holomorphic
variation of the Poisson boudary
is compact, meaning that
the associated family of map
is relatively compact in the uniform topology
(section \ref{ssub:compactness}).
To prove the compactness,
we show a non-intersection
property between the graphs
of the variation of the Poisson
boundary and the graphs
of the dual variation of the Poisson
boundary (section \ref{sub:intersections}).
Using the compactness,
we can conclude and prove
that if a matrix is proximal
at some parameter, it stays
proximal for every parameters
(section \ref{sub:fixed_points_hol}).

In the last section \ref{sec:stab_consequences}
we prove some consequences of the
proximal stability.

\subsection{Comparison of the different notions of stability}
\label{sub:comp}

In this section we
elaborate on the comparison
of the different notions of
stability we hinted to in the introduction,
and compare them with the one
we introduce in this article.

For endomorphisms of 
$
\P^{1} (\C)
$
Mañé, Sad and Sullivan
\cite{maneDynamicsRationalMaps1983}
and independently
Lyubich
\cite{lyubichTypicalPropertiesDynamics1983}
introduced a notion of stability
for holomorphic family.
They showed that
the stability of
a family 
$
(f_{\lambda})
_{\lambda \in \Lambda} 
$
can be defined by
the following equivalent properties,
among others
(for connected $\Lambda$):

\begin{enumerate}
	\item the periodic points
		of 
		$
		(f_{\lambda})
		$
		do not change type.
	\item the Julia sets
		$
		(J_{\lambda})
		$
		vary continuously
		in the Hausdorff topology
	\item all the 
	$
	(f_{\lambda})
	$
	are quasiconformally-conjugated
	on their Julia set.
\end{enumerate}
The decomposition stability-bifurcation
of the parameter space
is analogous to the decomposition
Julia-Fatou for a single endomorphism.

For representations
with value in $ \text{SL}(2, \C) $,
Sullivan
\cite{sullivanQuasiconformalHomeomorphismsDynamics1985}
proved that very different
notions of stability are in fact
equivalent.
Let $ \rho: \Gamma \to SL(2, \C) $ 
be a
faithful,
unbounded, non-rigid,
torsion-free
representation
with $ \Gamma $
a
finitely generated group.
Then the following are equivalent:
\begin{enumerate}
	\item $ \rho $ is convex-cocompact.
	\item $ \rho $ is algebraically stable.
	\item $ \rho $ is structurally stable
	\item The type of the elements of
		$ \rho $ is stable.
	\item $ \rho $ satisfies an
		hyperbolicity condition
		on its limit set.
\end{enumerate}
Convex-cocompact is a geometric property.
Algebraically stable means
that nearby representations stay
faithful; it is an algebraic property.
Structurally stable means
that the action
on $ \P^1 $
of nearby representations
are (smoothly) conjugated to
the one of
$ \rho $ on their limit set:
it is a dynamical property.
The stability of the type of elements of
$ \rho $ means that the type
(hyperbolic, parabolic or elliptic)
of a matrix $ \rho(\gamma) $ doesn't
change for nearby deformation of $ \rho $.
The hyperbolicity condition is a property
of expansion, analogue to the one
found in hyperbolic dynamics.
It is a remarkable fact,
specific to the rank 1,
that such different properties
are equivalent.
Observe that for representations
as well as for endomorphisms,
the $\lambda$-lemma
is a crucial tool to construct
conjugacies for stable families.

When generalizing to higher rank
groups, the equivalences
breakdown, a priori
(for instance, in the
case of representations
of
$ \Z $,
one easily sees that
algebraic stability
is not equivalent
to structural stability).
As said before,
the right generalization
of convex-cocompact in higher
rank is the notion of Anosov representation.
It is known that they enjoy
all of the previously defined
properties of stability.
The situation is less clear
for the reciprocal.
In \cite{kapovichSullivanStructuralStability2019},
Kapovich, Kim and Lee
explore the generalization
of the hyperbolicity property
and show that being Anosov
is equivalent to a kind
of hyperbolicity property
(see \cite[th. 6.3]{kapovichSullivanStructuralStability2019}).

In this article we study
the notion of proximal stability
(for family of representations)
(see the definition \ref{def:prox_stable} in section 
\ref{sec:statement}).
It is analoguous to the
notion of stability of
repelling $ J $-cycles
in \cite{bertelootDynamicalStabilityLyapunov2018}
for families of endomorphisms
of $ \P^{d} (\C) $ 
(which requires that
repelling cycles move holomorphically)
and
the notion of
weak
$ J^{*} $-stability
of
\cite{dujardinStabilityBifurcationsDissipative2014}
for polynomial automorphism
of $ \C^{2} $ 
(which requires that
saddle periodic points move
holomorphically).
We think of it as the generalization
of the \say{stability of type of elements}
property for $ SL(2, \C) $.
We say that a representation
is proximally stable
if the family composed of
all nearby deformations
is proximally stable.
It is clear that Anosov representations
are proximaly stable.
It would be an interesting question
to study if
proximally stable representations
are
also
Anosov.
For now, we can prove
that proximaly stables representations
statisfy some kinds of stability:
they are algebraically stable,
stably discrete,
and stably Anosov,
see section \ref{sec:stab_consequences}.
The propagation of the Anosov property
for proximally stable families of representations
is an analogue of the propagation
of uniform hyperbolicity
for weakly stable families of polynomial
automorphisms in
\cite{bergerStabilityHyperbolicityPolynomial2017}.

\section{Proximally stable implies pluriharmonicity of the Lyapunov exponent}
\label{sec:stab_prox_PH}

In this section we prove the \say{easy} direction of the
main theorem \ref{th:main}.
Let $ ( \rho_{\lambda} ) $
be a family of representations satisfying the standing
assumptions and which is proximally stable.
We show that $ \chi_{1} $ and $ \chi_{d+1} $
are pluriharmonic.


By the proposition 4.2 in \cite{deroinRandomWalksKleinian2012},
for almost every $ (\gamma_{n}) \in \g{incr} $ we have:
\begin{equation}
\label{eq:lyap_conv}
	\frac{1}{n}
	\log
	\norm{
		\rho_{\lambda}(
		\gamma^{(n)}
		)
	}
	\to
	\chi_1(\lambda)
	,
\end{equation}
where the convergence holds in $ L^{1}_{loc} $ .
Remark here that the $ (\gamma_{n}) $ can be
choosen independently of $ \lambda $.
We now show that it holds also
when we replace the norm
$ \norm{\cdot} $
by the spectral radius $ r $.
For that we use the following
lemma, which is a variation of 
the Hartog's Lemma:

\begin{lemma}[\cite{deroinRandomWalksKleinian2012}, Lemma 4.3]
\label{lem:compared_conv_psh}
	Let
	$ (u_{n}) $
	be a sequence of plurisubharmonic
	functions converging in $ L^{1}_{loc}$ 
	to a continuous plurisubharmonic
	function $ u $
	and
	$ (v_{n}) $
	another sequence of plurisubharmonic
	functions such that
	\begin{itemize}
		\item
			$ v_{n} \le u_{n} $
			pointwise,
		\item
			$ v_{n} (x) \to u_{n} (x) $ 
			for $ x $ in a dense subset,
	\end{itemize}
	then $ v_{n} \to u $
	in $ L^{1}_{loc} $.

\end{lemma}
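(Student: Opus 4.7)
The plan is to apply the PSH compactness principle to extract $L^1_{loc}$-convergent subsequences of $(v_n)$, identify any limit with $u$ by squeezing between the upper bound $v_n\le u_n$ and the pointwise data on the dense subset (call it $D$), and then conclude by the usual subsequence principle. Since $u_n\to u$ in $L^1_{loc}$ with $u$ continuous plurisubharmonic, $(u_n)$ is locally uniformly bounded above, and the inequality $v_n\le u_n$ transfers this to $(v_n)$. The Hartogs-type lemma recalled in Section \ref{sub:reminder_of_complex_analysis} then yields the dichotomy: either $v_n\to -\infty$ uniformly on compact sets, or some subsequence converges in $L^1_{loc}$ to a plurisubharmonic function. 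The first alternative is excluded because on $D$ the hypothesis forces $v_n(x)$ to remain close to $u_n(x)$, which in turn approaches the finite value $u(x)$.

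To identify a subsequential limit, let $v_{n_k}\to v$ in $L^1_{loc}$ with $v$ plurisubharmonic. The upper bound $v\le u$ on $\Lambda$ is obtained by taking $\limsup$ in $v_{n_k}\le u_{n_k}$, using the identity $(\limsup_k u_{n_k})^*=u$ (which holds \emph{everywhere} because $u$ is continuous and hence coincides with its own USC regularization), and then taking USC regularizations of the chain of inequalities. For the reverse inequality, fix $x_0\in\Lambda$ and pick a sequence $y_j\to x_0$ with $y_j\in D$. At each $y_j$ the hypothesis gives $\limsup_k v_{n_k}(y_j)=\limsup_k u_{n_k}(y_j)$, and choosing the $y_j$ outside the pluripolar set where $\limsup_k u_{n_k}<u$ yields $\limsup_k v_{n_k}(y_j)\ge u(y_j)$. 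Then
\begin{equation}
v(x_0)\;=\;\limsup_{y\to x_0}\limsup_k v_{n_k}(y)\;\ge\;\limsup_j u(y_j)\;=\;u(x_0),
\end{equation}
the last equality by continuity of $u$. Hence $v=u$ on $\Lambda$, and the subsequence principle gives $v_n\to u$ in $L^1_{loc}$.

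The main obstacle is the interaction between $L^1_{loc}$ and pointwise convergence for plurisubharmonic sequences, where a priori pointwise identities only hold quasi-everywhere. The continuity of $u$ is exactly the ingredient that upgrades $(\limsup u_n)^*=u$ from an almost-everywhere statement to a pointwise one; the same hypothesis is also what lets us ensure that the approximating sequence $y_j$ can be chosen to avoid the pluripolar exceptional set, so that the dense pointwise data on $D$ really does translate into a pointwise lower bound on the USC regularization $v$ at every point.
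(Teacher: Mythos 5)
The paper itself gives no proof here (the lemma is quoted from Deroin--Dujardin, Lemma 4.3), and your architecture — local uniform upper bounds from $v_n\le u_n$ and $L^1_{loc}$-convergence of $(u_n)$, the Hartogs compactness dichotomy, identification of every subsequential limit with $u$, then the subsequence principle — is exactly the standard argument. The problem is in how you exploit the dense-set hypothesis. You read it as a comparison between $v_n$ and $u_n$ at points of $D$, and to convert this into the lower bound $\limsup_k v_{n_k}(y_j)\ge u(y_j)$ you ``choose the $y_j$ outside the pluripolar set where $\limsup_k u_{n_k}<u$''. Nothing guarantees such a choice is possible: the only thing known about $D$ is density, and a dense set can be entirely contained in that pluripolar set (for instance $D$ countable). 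Worse, under your reading the statement is actually false, so the gap cannot be patched: on the unit disk take $D=\{a_j\}$ countable dense, $u\equiv 0$, $u_n=n^{-1/2}\sum_j 2^{-j}\max\bigl(\log(\abs{z-a_j}/2),-n\bigr)$ and $v_n=\sqrt{n}\,u_n$. Then $v_n\le u_n\le 0$, $u_n\to 0$ in $L^1_{loc}$, and at every $a_j$ both $u_n(a_j)$ and $v_n(a_j)$ tend to $-\infty$ (so $\limsup_k v_{n_k}(a_j)=\limsup_k u_{n_k}(a_j)$), yet $v_n$ converges to $\sum_j 2^{-j}\log(\abs{z-a_j}/2)\neq u$. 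The same issue infects your exclusion of the $-\infty$ alternative, where you assert that $u_n(x)$ ``approaches the finite value $u(x)$'' at points of $D$; pointwise convergence of $u_n$ to $u$ is only guaranteed off a pluripolar set, which may contain all of $D$.

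The lemma is stated loosely in the paper; the intended hypothesis — the one used three lines later ($v_n(\lambda_k)\to\chi_1(\lambda_k)$ on a dense sequence) and the one in Deroin--Dujardin — is that $v_n(x)\to u(x)$ for every $x$ in the dense set $D$, with $u(x)$ finite. With that reading your proof closes immediately and the pluripolar detour is unnecessary: at $y_j\in D$ you get $\limsup_k v_{n_k}(y_j)=u(y_j)$ directly, so $v(x_0)=(\limsup_k v_{n_k})^*(x_0)\ge\limsup_j u(y_j)=u(x_0)$ by continuity of $u$, and the finiteness of $u(x)$ at a single point of $D$ rules out locally uniform divergence to $-\infty$. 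So: right strategy, but as written the key step rests on an unavailable choice of points in $D$, and it must be repaired by using the comparison with $u$ rather than with $u_n$.
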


Let $ (\lambda_{k}) $
be a dense sequence of parameters
in $ \Lambda $.
For almost all $ (\gamma_{n}) $
we have that:
\begin{align}
	\frac{1}{n}
	\log
	r(
	\rho_{\lambda_{k} }
	(\gamma^{(n)} )
	)
	& \to
	\chi_{1}  (\lambda_{k} )
	,\\
	\frac{1}{n}
	\log
	\norm{
		\rho_{\lambda}(
		\gamma^{(n)}
		)
	}
	&\to
	\chi_{1} (\lambda)
	,
\end{align}
where the second
convergence is in $ L^{1}_{loc} $
and that $ \rho(\gamma_{n}) $
is proximal for $ n $
large enough
(see \cite[Th. 4.12]{benoistRandomWalksReductive2016}).
Fix a sequence $ (\gamma_{n}) $
such that \ref{eq:lyap_conv}
and these hold.

We define:
\begin{equation}
	u_{n} :
	\lambda
	\mapsto
	\frac{1}{n}
	\log
	\norm{
		\rho_{\lambda}(
		\gamma^{(n)}
		)
	}
	,
\end{equation}
and
\begin{equation}
	v_{n} :
	\lambda
	\mapsto
	\frac{1}{n}
	\log
	r(
	\rho_{\lambda}
	(\gamma^{(n)} )
	)
	.
\end{equation}
All of these functions are plurisubharmonic
and
we have $ v_{n} \le u_{n} $,
$ u_{n} \to \chi_{1}  $ and
$ v_{n} (\lambda_{k}) \to \chi_1 (\lambda_{k}) $.
By the Lemma \ref{lem:compared_conv_psh},
we have that $ u_{n} $ converges to $ \chi_{1}  $ in
$ L^{1}_{loc} $.

Moreover, each $ v_{n} $ is pluriharmonic
thanks to the proximal stability.
This sequence is bounded below by $ 0 $
and bounded above by the moment assumption.
It follows, by Hartog's lemma,
that the limit $ \chi_{1} $ is also
pluriharmonic.

We now deduce the pluriharmonicity of $ \chi_{d+1} $ 
from the pluriharmonicity of $ \chi_{1} $:

The top Lyapunov exponent
$ \chi_{1}^{*}  $
of the dual representation is pluriharmonic by the
argument above.
We have that
$
\chi_{1} ^{*} 
=
- \chi_{d+1}
$,
so $ \chi_{d+1}$
is also pluriharmonic.

\section{Pluriharmonic Lyapunov exponent implies proximal stability}

\subsection{A series of reductions}
\label{sub:reduction}

In this section we justify
the claim made in the discussion
of the hypothesis \ref{sub:hypothesis_of_the_theorem},
that we can weaken the standing assumption to
\begin{center}
	\emph{there exists one parameter $ \lambda_{0} $
		such that $ \rho_{\lambda_{0}}  $ 
		is strongly irreducible and proximal.
	}
\end{center}
We also explain why we can
assume in the proof of the theorem
that the dimension of $ \Lambda $
is 1.

\subsubsection{Reduction to the one dimensional case}
\label{ssub:reduction_one_dim}

Assume the theorem \ref{th:main} holds
if $ \Lambda $ is a complex manifold
of dimension 1.
Let $ (\rho_{\lambda})_{\lambda \in \Lambda} $
be a family of representations satisfying
the standing assumptions and that $ \chi_{1} $
and $ \chi_{d+1} $ are pluriharmonic.
Let $ \lambda_{0},\lambda_{1} \in \Lambda $
and suppose that $ \rho_{\lambda_{0} } (\gamma) $
is proximal for some $ \gamma \in \Gamma $.
We need to show that $ \gamma $ is proximal
at $ \lambda_{1} $.
It is enough to show that $ \lambda_{0} $
and $ \lambda_{1} $ can be connected by
a sequence of intersecting one dimensional
complex manifolds.
By compactness of a path
joining $ \lambda_{0} $
to $ \lambda_{1} $ 
we can find a sequence
$ B_{0}, \dots, B_{N} $
of open sets biholomorphic to a standard ball
such that $ \lambda_{0} \in B_{0} $,
$ \lambda_{1} \in B_{N} $ and
$ B_{i} \cap B_{i+1} \neq \emptyset $.
For each $ i $ we can find
$ D_{i} \subset B_{i} $
such that $ D_{i} $ is a submanifold
biholomorphic to a 
one-dimensional complex disk,
$ \lambda_{0} \in D_{0} $,
$ \lambda_{1} \in D_{N} $ and
$ D_{i} \cap D_{i+1} \neq \emptyset $.
By applying successively the one-dimensional
case theorem to the $ \rho_{|D_{i}} $,
if follows that $ \gamma $ is proximal
on each $ D_{i} $ and then at $ \lambda_{1} $.


Now that we assume $ dim \Lambda = 1 $,
the notions of
pluriharmonicity, plurisubharmonicity, etc.
are equivalent to
harmonicity, subharmonicity, etc.,
so we will drop the prefix "pluri" from now on.




\subsubsection{Proximality at every parameters}
\label{ssub:prox_all}

We show that if
$ (\rho_{\lambda}) $ is a family
which is strongly
irreducible at every parameters
and proximal at some parameter
then it is proximal at every parameter,
when $ \chi_{1} $ is harmonic.

By \cite[Cor. 10.5]{benoistRandomWalksReductive2016},
the representation $ \rho_{\lambda} $ is proximal
if and only if
$ \chi_{1} (\lambda)
>
\chi_{2} (\lambda)
$.
By assumption this inequality
is true at some parameter.
The function
$ \chi_{1} + \chi_{2} $
is subharmonic
and $ \chi_{1} $ is harmonic,
so $ \chi_{2} $ is subharmonic.
The function $ \chi_{1} - \chi_{2} $
is then superharmonic and non-negative.
By the minimum principle,
if it vanishes at some point
it's constantly equal to $ 0 $.
It is not the case,
so the function is positive.
We conclude that for all $ \lambda $,
$ \chi_{1} (\lambda)
> \chi_{2} (\lambda)$
and that $ \rho_{\lambda} $ is proximal.

\subsubsection{Irreducibility and strong irreducibility}
\label{ssub:irr_stron_irr}

We show that if $ (\rho_{\lambda}) $ is irreducible
at every parameters and strongly irreducible
and proximal at at least one parameter,
then it is strongly irreducible at every parameters
if $ \chi_{1} $ is harmonic.

By the reasoning of the previous section \ref{ssub:prox_all},
we have $ \chi_{1} > \chi_{2} $ everywhere.
It implies that the family is proximal
at every parameter, by irreducibility.
It is enough to show the following
(classical) lemma for a fixed
representation:
\begin{lemma}
	If
	$ \rho : \Gamma \to G $
	is an irreducible representation 
	which is proximal
	then
	$ \rho $ is strongly irreducible.
\end{lemma}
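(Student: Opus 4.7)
I would argue by contradiction. Suppose $\rho$ is irreducible and proximal but not strongly irreducible. By definition there then exists a minimal finite collection $\{W_{1},\dots,W_{r}\}$ of proper non-zero subspaces of $V$ whose union is $\rho(\Gamma)$-invariant; irreducibility forces $r\ge 2$, and minimality forces the induced action of $\Gamma$ on the set $\{W_{1},\dots,W_{r}\}$ to be transitive. Denote by $\Gamma_{0}\subset\Gamma$ the kernel of this permutation action: it is a normal subgroup of $\Gamma$ of finite index dividing $r!$, and every element of $\rho(\Gamma_{0})$ preserves each $W_{i}$ individually.

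By \cite[Lem. 4.1]{benoistRandomWalksReductive2016}, irreducibility combined with proximality yields a proximal element $g\in\rho(\Gamma)$; replacing $g$ by a suitable power we may assume $g\in\rho(\Gamma_{0})$, still proximal with attracting fixed point $v^{+}\in\P V$. The limit set $L(\rho)$, the unique minimal closed $\rho(\Gamma)$-invariant subset of $\P V$, is contained in the closed $\rho(\Gamma)$-invariant set $\bigcup_{i} W_{i}$, so $v^{+}$ must lie in some $W_{i_{0}}$. Transitivity of $\Gamma$ on $\{W_{1},\dots,W_{r}\}$ together with the normality of $\Gamma_{0}$ then yields, for every $j$, a proximal element of $\rho(\Gamma_{0})$ whose attracting fixed point lies in $W_{j}$, obtained by conjugating $g$ by an appropriate element of $\Gamma$.

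The main obstacle is to convert this structural information into a contradiction. The natural route is to pass to the Zariski closure $H=\overline{\rho(\Gamma)}^{\mathrm{Zar}}$ in $SL(V)$: the collection $\{W_{i}\}$ witnesses the failure of strong irreducibility for $H$, and the identity component $H^{\circ}$, being of finite index in $H$, is contained in the (closed, normal, finite-index) kernel of the $H$-action on $\{W_{i}\}$, hence preserves each $W_{i}$ individually. An algebraic-geometric argument using the connectedness of $H^{\circ}$, the proximal elements of $H^{\circ}$ with attracting fixed points in distinct $W_{i}$'s, and the irreducibility of $H$ should then yield the contradiction. Making this final reduction precise — disentangling the algebraic constraint on $H^{\circ}$ from the dynamical constraint on $L(\rho)$, and ruling out configurations of normalizer type — is the delicate part of the argument.
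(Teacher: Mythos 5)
There is a genuine gap: your proposal sets the stage but never derives the contradiction, and you say so yourself (\say{the delicate part of the argument}). The heart of the lemma is precisely the step you defer. Two concrete problems. First, your \emph{minimal finite collection} $\{W_{1},\dots,W_{r}\}$ is not enough structure: to run any contraction argument you need that distinct subspaces in the invariant family meet trivially, and minimality of the collection (in cardinality, say) does not give that. The paper instead chooses $W$ minimal among \emph{virtually invariant} subspaces (a subspace with finite $\rho(\Gamma)$-orbit, no proper subspace virtually invariant), which forces $W_{1}\cap W_{2}=\{0\}$ for distinct translates — this is what later converts \say{$gW_{1}\cap W_{1}\neq 0$} into \say{$gW_{1}=W_{1}$}. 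Second, the Zariski-closure route you sketch is not carried out and does not obviously close: knowing that $H^{\circ}$ preserves each $W_{i}$ while $\rho(\Gamma)$ acts irreducibly is not by itself contradictory (normalizer-of-a-torus–type configurations are exactly the obstruction, as you note), so the \say{algebraic-geometric argument} is an assertion, not a proof.

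For comparison, the paper finishes elementarily and dynamically, with no Zariski closure: let $g$ be a proximal element with attracting line $L^{+}$ and repelling hyperplane $H^{-}$. Since $\mathrm{span}\,\rho(\Gamma)W=V$ by irreducibility, the closed $g$-invariant set $\bigcup\rho(\Gamma)W$ is not contained in $H^{-}$, so iterating $g$ shows $L^{+}\subset W_{1}$ for some translate $W_{1}$. Then $gW_{1}\cap W_{1}\supset L^{+}$, so by trivial pairwise intersections $gW_{1}=W_{1}$, hence $g$ also preserves the union of the remaining translates; but points of that union off $H^{-}$ are attracted under $g^{n}$ to $L^{+}$, which lies at positive projective distance from $\bigcup\rho(\Gamma)W\setminus W_{1}$ — contradiction. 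Your first two paragraphs (finite index kernel $\Gamma_{0}$, proximal element with attracting point in some $W_{i_{0}}$, conjugates hitting every $W_{j}$) are compatible with this, but to complete your proof you would need to add both the trivial-intersection property of the invariant family and an argument of this contracting type (or a genuinely worked-out algebraic substitute), neither of which is present.
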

\begin{proof}
	First, remark that there exists
	a proximal element in the image of
	$ \rho $, because $ \rho $ is proximal
	and irreducible.

	Now we assert that an irreducible representation
	with a proximal element is strongly irreducible.
	Consider a virtually invariant, strongly
	irreducible subspace $ W $:
	it's a subspace such that $ \rho(\Gamma) W $ is finite,
	and no proper subspace is virtually invariant.
	Assume that $ W $ is not all of $ \C^{d+1} $,
	that is $ \rho $ is not strongly irreducible.
	Let $ L^{+} $ and $ H^{-} $ be the attractive line
	and repulsive hyperplane of $ g $.
	Observe that the orbit $ \rho(\Gamma) W $
	spans a vector space which is stable by
	$ \rho $: by irreducibility it must be the whole
	space. In particular, 
	the orbit $ \rho(\Gamma) W $ 
	is not
	contained in $ H^{-}$.
	Remark that for distincts $ W_{1}, W_{2} \in \rho(\Gamma)W $,
	the intersection $ W_{1} \cap W_{2} $ is trivial.
	Otherwise, it would also be a strongly irreducible virtually
	invariant subspace and a proper subspace of $ W_{1} $.

	The line $ L^{+} $ is in some $ W_{1} \in \rho(\Gamma)W $
	because $ \bigcup \rho(\Gamma)W $ is closed,
	stable by $ g $
	and if
	$ z \in \bigcup \rho(\Gamma)W \setminus H^{-} $ 
	then $ g^{n} z $ converges to $ L^{+} $.
	Then $ W_{1} $ is stable by $ g $, because
	$ g W_{1} \cap W_{1} $ contains $ L^{+} $.
	Now, considering the action of $ g $ on the
	finite set $ \rho (\Gamma) W $,
	we see that the reunion
	$ \bigcup \rho(\Gamma) W \setminus W_{1}  $
	is stable by $ g $.
	But for any
	$ z \in
	\bigcup \rho(\Gamma) W \setminus W_{1} $,
	the iterates $ g^{n} z $ converges to $ L^{+} $,
	which is absurd because
	$ L^{+} $ is at a positive (projective) distance
	from
	$\bigcup \rho(\Gamma) W \setminus W_{1} $.
\end{proof}

\subsubsection{Irreducibility}
\label{ssub:red_irr}

We first show that if $ (\rho_{\lambda})_{\lambda \in \Lambda}  $
is a family
which is strongly irreducible and proximal at
one parameter $ \lambda_{0} $,
then it is strongly irreducible and proximal
on a Zariski dense set $ \Lambda_{0} \subset \Lambda $,
if $ \chi_{1} $ is harmonic.
By the results of sections \ref{ssub:prox_all}
and \ref{ssub:irr_stron_irr},
it is enough to show that
the family is irreducible on a Zariski dense set.
This is true, because being irreducible
is an Zariski open condition
(see for example \cite[prop. 27]{sikoraCharacterVarieties2012}).

Now, if we assume that the family
$ (\rho_{\lambda}) $
is strongly irreducible and proximal at
one parameter $ \lambda_{0} $,
then it is strongly irreducible and proximal
on a Zariski dense set $ \Lambda_{0} \subset \Lambda $,
and we can apply the theorem \ref{th:main}
to the family $ (\rho_{\lambda})_{\lambda \in \Lambda_{0} } $.
To get the stability of proximal elements
at the discret set of points $
\Lambda \setminus \Lambda_{0} $,
we apply the following lemma \ref{lem:prox_zero}
at these points.

\begin{lemma}
\label{lem:prox_zero}
	Let $ M(\lambda) $
	be a holomorphic
	family of matrices
	indexed by
	$ \D $ such that
	for $ \lambda \neq 0 $
	the matrix $ M(\lambda) $
	is proximal.
	Then $ M(0) $ is proximal.
\end{lemma}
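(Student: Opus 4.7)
The plan is to argue by contradiction: assume $ M(0) $ is not proximal and derive a contradiction from the proximality of $ M(\lambda) $ on $ \D^{*} $. Since the lemma is used to extend proximality for matrices of the form $ \rho_{\lambda}(\gamma) \in SL(d+1, \C) $, I may assume that $ M(\lambda) $ is everywhere invertible; in particular $ r(M(0)) > 0 $, so no eigenvalue branch can collapse to $ 0 $ as $ \lambda \to 0 $.

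First I would organize the eigenvalues of $ M(\lambda) $ holomorphically. The characteristic polynomial $ P(z, \lambda) = \det( zI - M(\lambda) ) $ has coefficients that are holomorphic in $ \lambda $, so by the theory of Puiseux series there exist an integer $ N \ge 1 $ and a small disk around $ 0 $ on which, after the branched cover $ w \mapsto w^{N} = \lambda $, the eigenvalues lift to genuine holomorphic functions $ \ti \mu_{1}(w), \dots, \ti \mu_{d+1}(w) $, listed with algebraic multiplicity and specializing at $ w = 0 $ to the eigenvalues of $ M(0) $.

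Next I would exploit the failure of proximality at $ 0 $: it produces a pair of indices $ i \neq j $ with $ \abs{ \ti \mu_{i}(0) } = \abs{ \ti \mu_{j}(0) } = r(M(0)) $, arising either because $ M(0) $ admits two distinct maximal eigenvalues or because a maximal eigenvalue appears with algebraic multiplicity at least two. The remaining branches satisfy $ \abs{\ti \mu_{k}(0)} < r(M(0)) $ and so remain strictly smaller in modulus on a small punctured disk. Consequently, on this punctured disk, the top eigenvalue of $ M(\lambda) $ can only come from one of the finitely many branches $ \ti \mu_{k} $ with $ \abs{\ti \mu_{k}(0)} = r(M(0)) $, and by the proximality of $ M(\lambda) $ for $ \lambda \neq 0 $ the dominating branch must be unique; in particular $ \abs{\ti \mu_{i}(w)} \neq \abs{\ti \mu_{j}(w)} $ for every $ w $ in a punctured neighborhood of $ 0 $.

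Finally, I would consider $ f(w) = \ti \mu_{i}(w) / \ti \mu_{j}(w) $, which is holomorphic near $ 0 $ with $ f(0) \neq 0 $ and $ \abs{f(0)} = 1 $. The function $ \log \abs{f} $ is then harmonic near $ 0 $, vanishes at $ w = 0 $, and is nowhere zero on a punctured neighborhood of $ 0 $. This would make $ w = 0 $ a strict interior extremum of $ \log \abs{f} $, contradicting the maximum/minimum principle for harmonic functions. The main obstacle is the bookkeeping for the Puiseux parametrization and the case where the maximal eigenvalue of $ M(0) $ has algebraic multiplicity $ \ge 2 $ so that two branches collapse to the same value; once the eigenvalues are organized holomorphically, the final harmonic-function argument is immediate.
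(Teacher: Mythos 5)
Your route is genuinely different from the paper's: the paper never organizes the eigenvalues into branches, but instead works with $l_{1}=\log\abs{\mu_{1}}$ (harmonic on $\D^{*}$ since the top eigenvalue is simple there) and $l_{2}$, the log of the spectral radius of the restriction to the repelling hyperplane (continuous and subharmonic), shows $f=l_{1}-l_{2}$ is a positive continuous superharmonic function on $\D^{*}$ extending continuously to $0$ by Kato's perturbation theory, and excludes $f(0)=0$ by a Poisson-kernel estimate on the universal cover $\HH\to\D^{*}$. Your Puiseux-plus-maximum-principle strategy is viable and in some ways more elementary, but as written it has a gap at the key step.

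The gap is the claim that your chosen pair satisfies $\abs{\tilde{\mu}_{i}(w)}\neq\abs{\tilde{\mu}_{j}(w)}$ for all small $w\neq 0$. Non-proximality of $M(0)$ only gives you \emph{at least} two branches of maximal modulus at $0$; your assertion that ``the remaining branches satisfy $\abs{\tilde{\mu}_{k}(0)}<r(M(0))$'' presumes there are exactly two. If three or more branches attain the top modulus at $0$ (a triple top eigenvalue, or a double one together with another eigenvalue of the same modulus), then at a given $w\neq 0$ the unique dominant eigenvalue may come from a third branch, and proximality at $w$ says nothing about your pair $(i,j)$, which may well have equal moduli there; then $\log\abs{f}$ is not known to be nonvanishing on a punctured neighborhood and the final extremum argument has nothing to bite on. The fix is short but it is a missing idea: for each $w\neq 0$, proximality plus the fact that the branches list eigenvalues with multiplicity gives a \emph{unique} index of strictly maximal modulus; by continuity of the finitely many branches this index is locally constant, hence constant on the connected punctured $w$-disk, say equal to $i_{0}$. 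Letting $w\to 0$ gives $\abs{\tilde{\mu}_{i_{0}}(0)}=r(M(0))$, and non-proximality at $0$ supplies another branch $j$ with $\abs{\tilde{\mu}_{j}(0)}=r(M(0))>0$. Now $\log\abs{\tilde{\mu}_{i_{0}}/\tilde{\mu}_{j}}$ is harmonic near $0$, strictly positive for $w\neq 0$ and zero at $w=0$, contradicting the minimum principle. With this replacement (which also removes the need for your case distinction), your argument is complete.
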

\begin{proof}
	Let $ \mu_{1}(\lambda) $
	and $ \mu_{2}(\lambda) $
	be the eigenvalues of
	greatest and second greatest
	modulus of $ M(\lambda) $,
	for $ \lambda \neq 0 $.
	Let
	$ l_{1} = \log \abs{\mu_{1}} $ 
	and
	$ l_{2} = \log \abs{\mu_{2}} $.
	The function $ l_{1} $ is
	harmonic, because $ \mu_{1} $
	is holomorphic.
	The function $ l_{2} $ is
	subharmonic and continuous
	because it is
	the logarithm of the spectral
	radius of the restriction
	of $ M(\lambda) $ to its repelling
	hyperplane.
	Let $ f = l_{1} - l_{2} $.
	It is a continous, superharmonic,
	positive function on $ \D^{*} $.
	It extends to a continous function
	at $ 0 $ (see \cite[II.2]{katoPerturbationTheoryLinear1995}).
	We show that such a function can't
	vanish at $ 0 $.
	Up to restricting and rescaling,
	we can suppose that $ f $ is
	defined and continuous on $ \partial \D $.

	We pullback $ f $ to
	the univeral covering of
	$ \D^{*} $:
	let
	$ F = f \circ \pi $
	where $ \pi : \HH \to D^{*} $
	is given by
	$ \pi(z) = \exp (iz) $.
	The function $ F $ satisfies
	the same properties as $ f $:
	it is superharmonic, continuous,
	and positive. We show that
	it doesn't vanish at infinity.

	Define, for $ x + iy \in \HH $,
	\begin{equation}
		P (x + iy)
		=
		\frac{y}
		{x^{2} + y^{2}}
		,
	\end{equation}
	the Poisson kernel for $ \HH $.
	As $ F $ is superharmonic we have
	for all $ z \in \HH $:
	\begin{equation}
		F(z)
		\ge
		\int_{-\infty}^{+\infty} 
		P(z + t)
		F(t)
		dt
		.
	\end{equation}
	The map $ F $ is continous, positive
	and periodic on the real line $ \R $,
	so it is bounded below by some $ m > 0 $.
	We have for $ y > 0 $:
	\begin{align}
		F(iy) &\ge 
		m
		\int_{-\infty}^{+\infty} 
		P(iy + t)
		dt \\
		& \ge
		m
		\int_{y}^{+\infty} 
		P(iy + t)
		dt \\
		& =
		m
		\int_{y}^{+\infty} 
		\frac{y}{y^2 + t^2 }
		dt \\
		& \ge
		m
		\int_{y}^{+\infty} 
		\frac{y}{2 t^2 }
		dt \\
		& =
		 \frac{m}{2}
		 > 0
		 ,
	\end{align}
	so $ F $ doesn't vanish at infinity,
	and $ f $ doesn't vanish at $ 0 $.
	It follows that $ M $ stays proximal at $ 0 $.
\end{proof}

\subsection{The volume of graphs is controlled by the bifurcation current}
\label{sub:vol_bif}

In this section we show
the Proposition
\ref{th:bounded_vol}
that asserts that
the volume of iterates of graphs
under a path of the random walk
is controlled, in average,
by the bifurcation current.
First we introduce some
notations
and show some technical lemmas.


The results of this section are
local in nature,
so we will assume that
$ \Lambda $ is the unit disk.
A family $ (\rho_\lambda)_{\Lambda} $
of representations
naturally gives an action
of $ \Gamma $ on objects fibered over $ \Lambda $.
For example, it acts on a function
$ f:\Lambda \to \P V $ by:
\begin{equation}
	\gamma \cdot f
	:=
	\rho (\gamma) f
	=
	\lambda
	\mapsto
	\rho_{\lambda} (\gamma)
	\cdot
	f(\lambda)
,
\end{equation}
and on function
$ g:\Lambda \to \P V^{*} $ by:
\begin{equation}
	\gamma \cdot g
	:=
	\rho^{*}  (\gamma) f
	=
	\lambda
	\mapsto
	\rho_{\lambda} (\gamma^{-1} )^{t} 
	\cdot
	f(\lambda)
.
\end{equation}

For a holomorphic map $ \sigma $ from
$ \Lambda $ to $ \P V $
we denote $ \Gamma(\sigma) $ its graph,
which is a complex submanifold
of $ \Lambda \times \P V $
of dimension $ 1 $.


We recall that for an analytic set $ A $
of codimension $ k $,
we denote by $ [A] $ the current of integration
over $ A $ which is a positive $ (k,k)$-current.

We denote by $ \pi_{1} $ and $ \pi_{2} $
the projections $ \Lambda \times \P V $
over $ \Lambda $ and $ \P V $.
We can suppose $ \Lambda $
is equiped with
its standard Kälher form,
$ \omega_{1} $
and we equip $ \P V $ with the
Fubini-Study metric $ \omega_{2} $.
We equip $ \Lambda \times \P V $
with the Kälher form
$ \omega =
\pi^{*}_{1} \omega_{1}
+
\pi^{*}_{2} \omega_{2}
$ 
.

For a holomorphic map
$ f : U \to \P V $,
we define
$ dd^{c} \abs{f} $
as the following.
Let $ F $ be a local
holomorphic
lift of $ f $
to $ V $.
Then
$ dd^{c} \log \norm{F} $
is (locally) well-defined
and another choice of
$ F $ gives the same result.
We denote by
$ dd^{c} \log \abs{f} $
the (global)
current on $ U $
it defines.
In fact we have
$ dd^{c} \log \abs{f}
= f^{*} \omega_{2} $
(see \cite[Ex. 6.2.14]{dinhIntroductionTheoryCurrents2005}).

If $ T $ is a positive $ (1,1) $-current
on $ \Lambda $, we denote by $ \norm{T}_{K} $
its mass over the (relatively) compact subset $ K $
defined by
\begin{equation}
	\norm{T}_{K} 
	:=
	\int_{K} T
	.
\end{equation}

The functions
$ \chi_{1} $
and $ \chi_{d+1} $
are plurisubharmonic
on $ \Lambda $.
We define
the positive $ (1,1) $-currents
$ T_{1} $,
$ T_{d+1} $ and $ T_{bif} $ on $ \Lambda $ by
\begin{align}
	T_{1}
	&=
	dd^{c} \chi_{1}
	,\\
	T_{d+1}
	&=
	dd^{c} \chi_{d+1} 
	\\
	T_{bif} &= T_{1} + T_{d+1} 
	.
\end{align}

The following lemma gives a
formula
for the volume of a graph.
Let $ U $ be a relatively
compact open set in $ \Lambda $.

\begin{lemma}
	\label{lem:volume_mass}
	Let $ f $ be a holomorphic map
	from $ U $ to $ \P V $. Then:
	\begin{equation}
		\Vol(\Gamma(f))
		=
		\Vol(U)
		+
		\norm{
		(dd^{c} \log \abs{f}) 
		}_{U} 
		.
	\end{equation}
\end{lemma}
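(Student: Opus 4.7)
The plan is a direct unravelling of the definitions using the parametrization of the graph. Let $\phi : U \to \Lambda \times \P V$ be the holomorphic embedding $\phi(\lambda) = (\lambda, f(\lambda))$. Then $\phi$ is a biholomorphism from $U$ onto $\Gamma(f)$, so by the change-of-variables formula the volume we are after equals $\int_U \phi^{*} \omega$, where $\omega = \pi_1^{*} \omega_1 + \pi_2^{*} \omega_2$ is the K\"ahler form on $\Lambda \times \P V$. Since we are in the one-dimensional case (as justified in section \ref{ssub:reduction_one_dim}), $\Gamma(f)$ is a real surface, so integrating $\omega$ (rather than a power of it) is exactly what computes the area.

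Next I would split the pullback into its two natural summands. Since $\pi_1 \circ \phi = \mathrm{id}_U$ we have $\phi^{*} \pi_1^{*} \omega_1 = \omega_1$, and since $\pi_2 \circ \phi = f$ we get $\phi^{*} \pi_2^{*} \omega_2 = f^{*} \omega_2$. Using the identity $f^{*}\omega_2 = dd^{c} \log \abs{f}$ recalled just before the statement (a consequence of the Poincar\'e--Lelong formula applied to a local lift of $f$), we obtain
\begin{equation}
\phi^{*} \omega \;=\; \omega_1 \,+\, dd^{c} \log \abs{f}
\end{equation}
as $(1,1)$-forms on $U$.

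Integrating over $U$ then gives
\begin{equation}
\Vol(\Gamma(f))
\;=\; \int_U \omega_1 \,+\, \int_U dd^{c} \log \abs{f}
\;=\; \Vol(U) \,+\, \norm{dd^{c} \log \abs{f}}_U,
\end{equation}
where in the second equality I use that $dd^{c} \log \abs{f}$ is a positive current (because $\log \abs{f}$ is plurisubharmonic, or equivalently because it is the pullback of a positive form by a holomorphic map), so its mass over $U$ coincides with its integral. There is no genuine obstacle here: the only point that requires a word of care is the legitimacy of the identity $f^{*}\omega_2 = dd^{c}\log \abs{f}$, which is a local computation in any trivialization of the tautological bundle and which has already been cited in the setup.
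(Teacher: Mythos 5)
Your proof is correct and follows essentially the same route as the paper: parametrizing the graph (you use the section $\phi$, the paper uses the inverse biholomorphism $\pi_{1}|_{\Gamma(f)}$), splitting $\omega = \pi_1^{*}\omega_1 + \pi_2^{*}\omega_2$, and invoking the identity $f^{*}\omega_2 = dd^{c}\log\abs{f}$. No issues.
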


\begin{proof}
	We have
	\begin{equation}
		\Vol( \Gamma_{f} )
		=
		\int_{\Gamma_{f}}
		\omega
		=
		\int_{\Gamma_{f}}
		\pi_{1}^{*} \omega_{1} 
		+
		\pi_{2}^{*} \omega_{2} 
		.
	\end{equation}
	The restriction of
	the projection
	$ \pi_{1}
	: \Gamma_{f} \to U $
	induces a biholomorphism
	(that we denote by the same symbol).
	It gives, for the first term of the sum:
	$	
	\int_{\Gamma_{f}}
	\pi_{1}^{*} \omega_{1} 
	=
	\int_{U}
	\omega_{1}
	=
	\Vol(U)
	$.

	For the second term of the sum,
	remark that
	$ \pi_{2}
	\pi_{1}^{-1} $
	is just $ f $.
	We then have:
	\begin{align}
		\int_{\Gamma_{f}}
		\pi_{2}^{*} \omega_{2}
		&=
		\int_{\Gamma_{f}}
		\pi_{1}^{*}
		\pi_{1}^{-1*}
		\pi_{2}^{*} \omega_{2}
		\\
		&=
		\int_{U} 
		(\pi_{2}\pi_{1}^{-1}) ^{*} \omega_{2}
		\\
		&=
		\int_{U} 
		f^{*} \omega_{2}
		.
	\end{align}
	Recall that
	$ f^{*} \omega_{2} = dd^{c} \log \abs{f} $,
	so finally:
	\begin{equation}
		\int_{\Gamma_{f}}
		\pi_{2}^{*} \omega_{2}
		=
		\int_{U} 
		dd^{c} \log \abs{f}
		=
		\norm{
			dd^{c} \log \abs{f}
		}_{U} 
		.
	\end{equation}
\end{proof}

To use the Lemma \ref{lem:volume_mass},
with $ f $ of the form
$ \lambda \mapsto \gamma^{(n)}_{\lambda} z_{0} $
we need estimates on the convergence
of potentials to the Lyapunov exponent.

\begin{lemma}
	\label{lem:speed_conv}
	Under the standing assumptions
	on the family of representations
	and $ \mu $,
	for any $ x \in \P V $
	we have
	\begin{equation}
		\chi^{(n)}_{1} (\lambda)
		:=
		\int
		\log
		\frac{
			\norm{
				\rho_{\lambda} 
				(\gamma^{(n)})
				x
			}
			}{ \norm{x}
		}
		d \mu^{n}(\gamma)
		=
		n \chi_{1} (\lambda)
		+
		O(1)
		,
	\end{equation}
	where the $ O(1) $ 
	is locally uniform in $ \lambda $ 
	in the $ L^{\infty} $
	norm, and:
	\begin{equation}
		dd^{c} \chi^{(n)}_{1} 
		=
		n dd^{c} \chi_{1} 
		+ O(1)
		,
	\end{equation}
	where the $ O(1) $
	is in the mass norm
	over any compact.
\end{lemma}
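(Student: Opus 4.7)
The approach is to decompose $\chi_1^{(n)}(\lambda)$ via the transition operator $P_\lambda$ acting on $\P V$ and apply the exponential contraction of Le Page. Introduce the Hölder function on $\P V$
\begin{equation}
\varphi_\lambda(y) := \int_\Gamma \log \frac{\norm{\rho_\lambda(\eta) y}}{\norm{y}}\, d\mu(\eta),
\end{equation}
whose average against the stationary measure $\nu_\lambda$ equals $\chi_1(\lambda)$ by Furstenberg's formula. Writing $\log \norm{\rho_\lambda(\gamma^{(n)}) x}/\norm{x}$ as a telescoping sum of $n$ successive log-ratios, integrating against $\mu^n$, and using independence of the increments yields the Markov-type identity
\begin{equation}
\chi_1^{(n)}(\lambda) = \sum_{k=0}^{n-1} (P_\lambda^k \varphi_\lambda)(x).
\end{equation}

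For the $L^\infty$ estimate, Le Page's theorem (the exponential contraction of the transition operator recalled in \ref{ssub:rw_lin}) gives $\norm{P_\lambda^k \varphi_\lambda - \chi_1(\lambda)}_{C^\alpha(\P V)} \le C \beta^k$ for some $\beta \in (0,1)$. The holomorphic, hence locally uniformly continuous, dependence of $\rho_\lambda$ on $\lambda$, combined with the uniform exponential moment on any compact $K \Subset \Lambda$, lets one take $C$ and $\beta$ uniform on $K$. Evaluating at $x$ and summing a geometric series:
\begin{equation}
|\chi_1^{(n)}(\lambda) - n\chi_1(\lambda)| \le \sum_{k \ge 0} C\beta^k = C/(1-\beta),
\end{equation}
which is the first claim.

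The main obstacle is the mass estimate on $dd^c(\chi_1^{(n)} - n\chi_1)$: even though $\chi_1^{(n)}$ and $n\chi_1$ are both plurisubharmonic, their difference is not, so the Chern--Levine--Nirenberg inequality does not apply directly to deduce bounded mass from the $L^\infty$ bound alone. The plan is to exploit the geometric representation
\begin{equation}
dd^c \chi_1^{(n)}(\lambda) = \int_\Gamma (\sigma_\gamma)^* \omega_{FS}\, d\mu^n(\gamma), \qquad \sigma_\gamma(\lambda) = [\rho_\lambda(\gamma^{(n)}) x] \in \P V,
\end{equation}
which exhibits $dd^c\chi_1^{(n)}$ as an average of pullbacks of the Fubini--Study form under random holomorphic curves, and to upgrade the $C^\alpha$ Le Page control on the increments $u_k(\lambda) := (P_\lambda^k \varphi_\lambda)(x) - \chi_1(\lambda)$ into a geometric mass bound $\norm{dd^c u_k}_K \le C_K \beta^k$, via an integration-by-parts argument against smooth cutoffs combined with the $C^\alpha$-rate. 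Summing then yields $\norm{dd^c(\chi_1^{(n)} - n\chi_1)}_K \le C_K/(1-\beta)$. The technical heart is precisely this promotion of potential-level Hölder control to current-level mass control, together with securing the locally uniform Le Page constants as the family varies holomorphically.
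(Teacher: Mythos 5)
Your first half coincides with the paper's proof: the telescoping identity $\chi_1^{(n)}(\lambda)=\sum_{k=0}^{n-1}P_\lambda^k\varphi_\lambda(x)$, Le Page's exponential contraction of the transition operator, and the local uniformity in $\lambda$ of the constants (coming from the uniform average contraction and the exponential moment) give the $L^\infty$ estimate exactly as in the paper.

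The second half has a genuine gap. You propose to upgrade the sup-norm decay $\norm{u_k}_{L^\infty(K)}\le C\beta^k$ of the increments $u_k(\lambda)=P_\lambda^k\varphi_\lambda(x)-\chi_1(\lambda)$ to a mass bound $\norm{dd^c u_k}_K\le C_K\beta^k$ by integration by parts against smooth cutoffs. This cannot work as stated: for a smooth test form $\chi$ one only obtains $\abs{\langle dd^c u_k,\chi\rangle}=\abs{\int u_k\,dd^c\chi}\le C\beta^k\norm{dd^c\chi}_{L^1}$, i.e.\ smallness of $dd^c u_k$ in a weak (distributional) sense. Since $dd^c u_k$ is a signed current with no positivity, this does not bound its total variation: sup-norm closeness of potentials never controls the mass of the difference of their $dd^c$'s (add a small-amplitude, high-frequency perturbation to a plurisubharmonic function with large $dd^c$: the potentials stay uniformly close while the masses of the $dd^c$'s differ by a quantity comparable to the individual masses, which here grow with $n$). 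So the step you call the technical heart is precisely what is missing. Note also how the paper actually uses Chern--Levine--Nirenberg: the current-level estimate is only invoked in Proposition \ref{th:bounded_vol}, where $T_1=dd^c\chi_1=0$ on $U$. There $n\chi_1$ is pluriharmonic, hence $\chi_1^{(n)}-n\chi_1$ is itself plurisubharmonic, uniformly bounded on compacts by the first estimate, and $dd^c(\chi_1^{(n)}-n\chi_1)=dd^c\chi_1^{(n)}$ is a positive current; Chern--Levine--Nirenberg applied to this plurisubharmonic function immediately yields $\norm{dd^c\chi_1^{(n)}}_K=O(1)$. Thus your objection that CLN \say{does not apply directly} is pertinent only for the statement at an arbitrary parameter; in the situation where the estimate is needed the difference is genuinely plurisubharmonic and CLN applies directly, whereas your substitute argument does not establish the general case either.
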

\begin{proof}
	We sketch the argument, which is the same as in
	\cite[Prop. 3.8]{deroinRandomWalksKleinian2012}.
	We define $ \varphi $ on $ \P V $ 
	by
	\begin{equation}
		\varphi(x)
		=
		\int
		\log
		\frac{
			\norm{\rho_{\lambda} (\gamma)x}
		}
		{
			\norm{x}
		}
		d \mu (\gamma)
		,
	\end{equation}
	and by Furstenberg's formula we have
	$
	\int \varphi d\nu_{\lambda} 
	=
	\chi_{\lambda} (\lambda)
	.
	$
Recall that $ P_{\lambda} $ is the transition operator.
	We compute:
	\begin{align}
		&
		\int
		\log
		\frac{
			\norm{
				\rho_{\lambda} (
				\gamma_{n} \cdots \gamma_{1}
				)
				x
			}
		}
		{
			\norm{x}
		}
		d\mu(\gamma_{n})
		\cdots
		d\mu(\gamma_{1})
		\\
		&=
		\sum_{k=0}^{n-1} 
		\int
		\log
		\frac{
			\norm{
				\rho_{\lambda} (
				\gamma_{k+1} \cdots \gamma_{1}
				)
				x
			}
		}
		{
			\norm{
				\rho_{\lambda} (
				\gamma_{k} \cdots \gamma_{1}
				)
				x
}
		}
		d\mu(\gamma_{k})
		\cdots
		d\mu(\gamma_{1})
		\\
		&=
		\sum_{k=0}^{n-1} 
		P_{\lambda} ^{k} \varphi (x)
		,
	\end{align}
	so,
	\begin{equation}
		\int
		\log
		\frac{
			\norm{
				\rho_{\lambda} (
				\gamma_{n} \cdots \gamma_{1}
				)
				x
			}
		}
		{
			\norm{x}
		}
		d\mu(\gamma_{n})
		\cdots
		d\mu(\gamma_{1})
		-
		n \chi_{1} (\lambda)
		=
		\sum_{k=0}^{n-1} 
		(
		P_{\lambda} ^{k} \varphi (x)
		-
		\int \varphi d \nu_{\lambda} 
		)
	\end{equation}
	and as we have
	\begin{equation}
		\norm{
			P_{\lambda} ^{n} \varphi
			-
			\int \varphi d \nu_{\lambda} 
		}
		_{C^{\alpha} } 
		\le
		C
		\beta^{n} 
		\norm{\varphi}
		_{C^{\alpha} } 
		,
	\end{equation}
	the last sum is bounded,
	independently of $ x $.
	It implies the estimate
	at a fixed representation:
	\begin{equation}
		\log
		\frac{
			\norm{
				\rho(
				\gamma_{n} \cdots \gamma_{1}
				)
				x
			}
		}
		{
			\norm{x}
		}
		d\mu(\gamma_{n})
		\cdots
		d\mu(\gamma_{1})
		=
		n \chi_{1} 
		+
		O(
		1
		)
		.
	\end{equation}
	The estimate is 
	locally uniform
	in $ \lambda $ 
	because the constants
	$ \alpha, C, \beta $
	can be chosen
	locally independently of
	$ \lambda $,
	because they depend
	only of the average
	contraction
	\begin{equation}
		\sup_{x \neq y \in \P V } 
		\int
		\left( 
			\frac{
				d(\rho_{\lambda} (\gamma)x,
				\rho_{\lambda} (\gamma)y)
				}{
				d(x,y)
			}
		\right)^{\alpha_{0} }
		d \mu^{*n_{0} } (\gamma)
		< 1
		,
	\end{equation}
	(for some $ \alpha_{0} $, $ n_{0} $)
	and the exponential moment condition
	on $ \mu $.

	The second estimate is deduced
	from the first by an application
	of the Chern-Levine-Nirenberg inequality
	(see \cite[prop. 3.3]{demaillyComplexAnalyticDifferential}).
\end{proof}

From these lemmas we can deduce
the main result of this section,
a first step for the study
of the stability of fixed points.

\begin{propo}
	\label{th:bounded_vol}
	Let $ \sigma_{0} $
	be a constant map
	$ U \to \P V $.
	Under the standing assumptions,
	if $ T_{1} =0 $ on $ U $,
	the mean volume of the graphs
	of maps
	$ \gamma^{(n)} \sigma_{0} $	
	is bounded,
	that is
	\begin{equation}
		\int
		\Vol(
		\Gamma( \gamma^{(n)} \sigma_{0} )
		)
		d \mu^{n} (\gamma)
		=
		O(1)
		.
	\end{equation}
\end{propo}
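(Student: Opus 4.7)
The plan is to apply Lemma \ref{lem:volume_mass} to convert each graph volume into the mass of a positive $(1,1)$-current, then use Lemma \ref{lem:speed_conv} together with the hypothesis $T_1=0$ to bound this mass in average. The whole argument is an exchange of integrals, made rigorous by the positivity of $dd^c$ of plurisubharmonic functions.

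First, pick a unit lift $X_0 \in V$ of the constant value of $\sigma_0$. For each $\gamma \in \Gamma$ the map $\lambda \mapsto \rho_\lambda(\gamma) X_0$ is a holomorphic lift to $V$ of $\gamma \cdot \sigma_0$, so Lemma \ref{lem:volume_mass} gives
\[
	\Vol(\Gamma(\gamma^{(n)}\sigma_0)) = \Vol(U) + \norm{dd^c \log \norm{\rho_\cdot(\gamma) X_0}}_U,
\]
and it suffices to bound the $\mu^n$-average of the mass term.

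Since $\log \norm{\rho_\lambda(\gamma) X_0}$ is plurisubharmonic in $\lambda$, the current $dd^c \log \norm{\rho_\cdot(\gamma) X_0}$ is positive, so its mass on $U$ equals $\int_U dd^c \log \norm{\rho_\cdot(\gamma) X_0}$. Positivity allows Fubini to exchange the mass integral with the $\mu^n$-integration in $\gamma$, and linearity lets $dd^c$ be pulled out of the $\mu^n$-average (valid on $L^1_{\mathrm{loc}}$ functions, which holds thanks to the first moment condition on $\mu$). Hence
\[
	\int \norm{dd^c \log \norm{\rho_\cdot(\gamma) X_0}}_U\, d\mu^n(\gamma) = \norm{dd^c \chi_1^{(n)}}_U,
\]
where $\chi_1^{(n)}(\lambda) := \int \log \norm{\rho_\lambda(\gamma) X_0}\, d\mu^n(\gamma)$ is precisely the potential of Lemma \ref{lem:speed_conv} (after normalizing $\norm{X_0}=1$).

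Lemma \ref{lem:speed_conv} then states that $dd^c \chi_1^{(n)} = n T_1 + O(1)$ in mass on compacts, and the hypothesis $T_1 = 0$ on $U$ collapses this to $O(1)$, independent of $n$. Combined with the $\Vol(U)$ term, this yields the desired bound. The only point requiring care is the interchange of $dd^c$ with the $\mu^n$-integration and the use of Fubini on the mass; both are guaranteed by the positivity of $dd^c$ of plurisubharmonic functions together with the local integrability of $\log \norm{\rho_\cdot(\gamma)X_0}$. Once these exchanges are justified, the proposition is essentially an immediate combination of Lemmas \ref{lem:volume_mass} and \ref{lem:speed_conv}.
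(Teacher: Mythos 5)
Your proof is correct and follows essentially the same route as the paper: apply Lemma \ref{lem:volume_mass} with the lift $\lambda \mapsto \rho_\lambda(\gamma^{(n)})v_0$, exchange the $\mu^n$-average with the mass and with $dd^c$ (justified by positivity of the currents), and conclude via Lemma \ref{lem:speed_conv} together with $T_1=0$. Your extra care about the interchange of $dd^c$ with the averaging is exactly the implicit step in the paper's one-line exchange of norm and integral.
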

\begin{proof}
	By lemma \ref{lem:volume_mass},
	it is enough to bound:
	\begin{equation}
		\int
		\norm{
			dd^{c} \log \abs{\gamma^{(n)} \sigma_{0}}
		}_{U} 
		d \mu^{n} (\gamma)
		=
		\norm{
		\int
			dd^{c} \log \abs{\gamma^{(n)} \sigma_{0}}
		d \mu^{n} (\gamma)
		}_{U} 
		,
	\end{equation}
	A lift
	of $ \gamma^{(n)} \sigma_{0} $
	to $ V $ 
	is given by
	\begin{equation}
		F_{n} (\lambda)
		=
		\rho_{\lambda}(\gamma^{(n)})
		\cdot v_{0},
	\end{equation}
	where $ v_{0} $ is a non-zero
	vector in the line given by $ \sigma_{0} $.
	By lemma \ref{lem:speed_conv},
	\begin{equation}
		dd^{c} 
		\int
		\log \norm{F_{n}(\lambda)}
		d \mu^{n} (\gamma)
		= O(1)
		,
	\end{equation}
	in the mass norm,
	as we have
	$ dd^{c} \chi_{1} = T_{1} = 0 $.
	This means that
	\begin{equation}
		\norm{
		\int
			dd^{c} \log \abs{\gamma^{(n)} \sigma_{0}}
		d \mu^{n} (\gamma)
		}_{U} 
		= O(1)
		,
	\end{equation}
	and the claim follows.
\end{proof}

We can state this result
for the dual random walk:
\begin{propo}
	\label{th:bounded_vol_dual}
	Let $ \varphi_{0} $
	be a constant map
	$ U \to \P V^{*} $.
	Under the standing assumptions,
	if $ T_{d+1} =0 $ on $ U $,
	we have
	\begin{equation}
		\int
		\Vol(
		\Gamma( \gamma^{(n)} \varphi_{0} )
		)
		d \mu^{n} (\gamma)
		=
		O(1)
		.
	\end{equation}
\end{propo}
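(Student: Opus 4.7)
The plan is to reduce this statement to Proposition \ref{th:bounded_vol} applied to the dual random walk introduced in Section \ref{ssub:dual_random_walk}. I will check three things in turn: that the dual family satisfies the hypotheses of Proposition \ref{th:bounded_vol}, that the action defined in Section \ref{sub:vol_bif} on maps $\varphi : \Lambda \to \P V^{*}$ really coincides with the trajectory of the dual random walk applied to the basepoint, and that the hypothesis $T_{d+1} = 0$ translates exactly into the vanishing of the top bifurcation current of the dual family.

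First, as recalled in Section \ref{ssub:dual_random_walk}, the dual representation $\rho^{*}_{\lambda}(\gamma) = \rho_{\lambda}(\gamma^{-1})^{t}$ inherits strong irreducibility and proximality from $\rho_{\lambda}$, so $(\rho^{*}_{\lambda})$ is a family of representations $\Gamma \to SL(V^{*})$ satisfying the standing assumptions. Second, the map $\gamma \mapsto \rho^{*}(\gamma)$ is a group homomorphism, hence for a finite sequence of increments $\gamma_{1}, \dots, \gamma_{n}$ one has $\rho^{*}(\gamma^{(n)}) = \rho^{*}(\gamma_{n}) \cdots \rho^{*}(\gamma_{1})$, which is precisely a trajectory of length $n$ of the dual random walk in $SL(V^{*})$ starting from the identity. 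In the notation of Section \ref{sub:vol_bif}, $\gamma^{(n)} \varphi_{0}$ then denotes exactly the image of the constant section $\varphi_{0}$ under this trajectory, in complete analogy with the action on $\Lambda \times \P V$ used in Proposition \ref{th:bounded_vol}.

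Third, the relation $\chi^{*}_{i} = -\chi_{d+2-i}$ gives in particular $\chi^{*}_{1} = -\chi_{d+1}$, so the top bifurcation current of the dual family is $dd^{c} \chi^{*}_{1} = -T_{d+1}$. The assumption $T_{d+1} = 0$ on $U$ therefore means exactly that this top bifurcation current vanishes on $U$, which is the hypothesis of Proposition \ref{th:bounded_vol} for the dual family. Applying that proposition to $\rho^{*}$ with the constant initial section $\varphi_{0}$ immediately yields
\begin{equation}
    \int \Vol \bigl( \Gamma(\gamma^{(n)} \varphi_{0}) \bigr) \, d\mu^{n}(\gamma) = O(1).
\end{equation}

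The only potential obstacle is bookkeeping: one must verify that the exponential moment condition, the contraction estimate from \cite{lepageTheoremesLimitesPour1980}, and Lemma \ref{lem:speed_conv} all transfer from $\rho$ to $\rho^{*}$. This is immediate once one observes that $\norm{\rho^{*}(\gamma)} = \norm{\rho(\gamma^{-1})}$, so the moment condition on $\mu$ passes to $\check{\mu}$ (and hence to $\mu$ again after using the symmetry of the word length), and that the proofs of the intermediate lemmas only used the standing assumptions plus an exponential moment, both of which hold for the dual family.
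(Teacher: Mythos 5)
Your proposal is correct and follows essentially the same route as the paper, which proves the dual statement simply by noting that $\chi_{1}^{*} = -\chi_{d+1}$ and invoking Proposition \ref{th:bounded_vol} for the dual random walk (whose standing assumptions and moment conditions transfer exactly as you check). Your write-up just makes explicit the bookkeeping that the paper leaves implicit.
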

Indeed, recall that
the dual Lyapunov exponent
$
\chi_{1}^{*}
$
equals
$
-\chi_{d+1} 
$.

\subsection{Holomorphic variation of the Poisson boundary}
\label{sub:hol_poisson}

In this section we show that the
pluriharmonicity of $ \chi_{1} $ 
allows us to construct
a holomorphic variation of the
Poisson boundary.

The Poisson boundary of the random walk
$ (\Gamma, \mu) $ is a mesurable space
that governs the asymptotic properties
of trajectories of the (right) random walk.

We recall briefly
a construction of the Poisson boundary.
Let $ \g{incr} = \Gamma^{\N} $
the space of increments
with the measure $ \bar{\mu} = \mu^{\N} $.

The natural map from the space
of increment \g{incr}
to the space of trajectories
$ \g{traj} =\Gamma^{\N}$
\begin{align}
	\tau : \g{incr} & \to \g{traj}
	\\
	(\gamma_{n})
	& \mapsto
	(\gamma^{(n)}  = \gamma_{1} \cdots \gamma_{n})
\end{align}
pushes the measure $ \bar{\mu} $
to a measure $ P = \tau_{*} \bar{\mu} $ 
and induces a isomorphism of measured spaces.
The space of trajectories
admits a natural action of $ \Gamma $,
which is coordinate-wise multiplication
on the left.
Both \g{incr}
and \g{traj} 
are equiped with
the coordinate shift
defined by
$ \sigma( (x_{n}))
= (x_{n+1})$
and we have
\begin{equation}
	\tau(\sigma (\gamma_{n}))
	=
	\gamma_{1}^{-1}
	\sigma
	\tau(i)
	.
\end{equation}
We define the following
equivalence relation
on \g{traj} :
\begin{equation}
	t \sim t'
	\iff
	\exists k,k' \in \N,
	\sigma^{k} (t)
	=
	\sigma^{k'} (t')
	.
\end{equation}
As the measure $ P $
is shift-invariant,
we obtain a measure
on the quotient,
still denoted by $ P $.
The
$ \Gamma $-action
also goes down
to the quotient,
and  $ P $
is a $ \mu $-stationary
measure
for this action.
The quotient space
with this measure
is denoted by
$ (\PP, P) $
and is called
the Poisson boundary
of the random walk
$ (\Gamma, \mu) $.
The notation
\say{let $ \xi = (\gamma_{n}) \in \PP $}
means that
$ \xi $ is the element of $ \PP $
corresponding to
$ (\gamma_{n}) \in \g{incr} $ 
via the natural maps,
and $ \gamma^{(n)} $
is the corresponding element
of \g{traj}.
\begin{Remark}
	This definition of the
	Poisson boundary is
	not the usual one
	but it is enough
	for what is needed
	in the following.
	For a general
	definition
	see
	\cite{kaimanovichRandomWalksDiscrete1983}.
\end{Remark}

Consider a
random walk induced by
$ \rho : \Gamma \to G $.
Recall from \ref{ssub:rw_lin}
the Furstenberg
limit map
$ \theta_{\rho} :
\g{incr} \to \P V $.
From its definition,
it is clear that it
factors through
\g{traj} 
and $ \PP $ 
and
induces
a map
$ \theta_\rho : \PP \to \P V $
and that
the stationary measure is
$ \nu_{\rho} = (\theta_{\rho})_{*} P $.
The limit map is $ \rho $-equivariant:
$ \theta_{\rho}(\gamma \xi)
=
\rho(\gamma)
\theta_{\rho}(\xi) $.

Given a holomorphic
family of representation
$ (\rho_{\lambda}) $
for $ \lambda \in U $,
satisfying the standing assumptions,
we define
a \emph{holomorphic variation of
the Poisson boundary}
as a map
\begin{equation}
	\theta:
	\PP \times U
	\to
	\P V
	,
\end{equation}
such that:
\begin{itemize}
	\item $\theta$ is mesurable and defined almost
		everywhere with respect to the
		first variable.
	\item $ \theta $ is holomorphic
		with respect to the second
		variable.
	\item $ \theta $ is equivariant with
		respect to the first variable
		that is
		\begin{equation}
			\theta
			( \gamma \xi,
			\lambda)
			=
			\rho_{\lambda}(\gamma)
			\theta
			(\xi, \lambda)
			,
		\end{equation}
		for almost all $ \xi $
		and all $ \gamma,\lambda $.
\end{itemize}

The goal of this section is to prove
that, if the current
$ T_{1} $ 
is null
on the open set $ U $,
we can construct a holomorphic
variation of the Poisson boundary.
In the following, we make this assumption.

For a given
parameter $ \lambda \in U $,
we define
the map $ \theta( \cdot, \lambda) $
to be $ \theta_{\rho_{\lambda} } $.
For each $ \lambda $,
this map is defined
almost everywhere
on $ \PP $.
The difficulty is to \say{glue} all these
maps together holomorphically,
and to find a full measure subset
on which these maps are defined
for all $ \lambda $.
We follow closely \cite[Lemma 3.10]{deroinRandomWalksKleinian2012}.


The idea of the proof is the following:
We fix a full measure subset of
$ \xi \in \PP $
such that for a countable
dense set of parameters
$ (\lambda_{q} ) $,
$ \theta_{\rho_{\lambda_{q} } } $
is defined at $ \xi $.
Then using the boundedness
in average of the volume
of the graphs
$ \Gamma(\gamma_{1} \cdots \gamma_{n} \sigma_{0}) $
proved in the previous section,
we extend these assignement
to holomorphic maps.
More precisely:

Fix a $ P_{e} $-full measure
subset $\Omega_{1} \subset \PP $ 
such that for a a countable dense set of parameter
$ ( \lambda_{q} )$,
the boundary map
$ \theta_{\rho_{\lambda_{q} } }  $
is defined
on $ \Omega_{1} $,
We will extend these maps defined
only on the $ (\lambda_{q}) $
to holomorphic maps.

For a
$ \xi = (\gamma_{n})
\in \Omega_{1} $,
define $ f_{\xi,n} $ to be the
map
$ \gamma_{1} \cdots \gamma_{n} \sigma_{0} $,
with the notation from the previous
section.
By Theorem \ref{th:bounded_vol},
we know that:
\begin{equation}
	\int
	\Vol(
	\Gamma( f_{\xi,n} )
	)
	d P_{e} (\xi)
	=
	\int
	\Vol(
	\Gamma( \gamma^{(n)} \sigma_{0} )
	)
	d \mu^{n} (\gamma)
	\le
	C
	,
\end{equation}
for some constant $ C $,
that is, the volume
$\Vol(
\Gamma( f_{\xi,n} )
)$
are bounded in mean.
By the Borel-Cantelli lemma we have
\begin{equation}
	\int
	\liminf
	\Vol(
	\Gamma( f_{\xi,n} )
	)
	d P_{e} (\xi)
	\le
	\liminf
	\int
	\Vol(
	\Gamma( f_{\xi,n} )
	)
	d P_{e} (\xi)
	\le
	C
	,
\end{equation}
and consequently
for a $ P_{e} $-full measure
subset $ \Omega_{2} $,
for all $ \xi \in \Omega_{2} $,
$\liminf
\Vol(
\Gamma( f_{\xi,n} )
)$
is finite,
and we can extract a subsequence
$ (f_{\xi, n_{k}}) $
of graphs of bounded volume.
We can apply the following
lemma
(whose proof is postponned
to the end of the section):
\begin{lemma}
\label{lem:bishop}
	If
	$ (f_{i}) $
	is a sequence
	of holomorphic maps
	on $ U $ 
	with graphs
	$ \gamma_{i}
	\subset U \times \P V $
	of uniformly bounded volume,
	then, up to a subsequence,
	the sequence of graphs
	converges (as analytic sets)
	to the union
	of the graph of a holomorphic
	map $ f $ on $ U $ and
	finitely many "bubbles",
	that is analytic sets
	contained in a fiber
	$ z_{0} \times \P V $.

	Moreover,
	away from the bubbles,
	the sequence $ (f_{i}) $ 
	converges
	uniformly on compact subset
	to $ f $.
\end{lemma}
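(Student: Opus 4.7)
The plan is to apply Bishop's compactness theorem for analytic cycles to the graphs $\Gamma_i$, viewed as analytic subsets of pure dimension $1$ in the complex manifold $U \times \P V$. Since $\P V$ is compact and the graphs have uniformly bounded volume, the currents of integration $[\Gamma_i]$ form a relatively compact family of positive closed currents of bidimension $(1,1)$; extracting a subsequence, they converge weakly to a limit $T$. By Bishop's theorem on the structure of limits of analytic chains (or equivalently by Siu's semicontinuity theorem together with the King formula), the support of $T$ is an analytic subset $A \subset U \times \P V$ of pure dimension $1$, and $T = \sum m_j [A_j]$ where the $A_j$ are the irreducible components of $A$ and $m_j$ are positive integers.

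Next, I decompose $A$ with respect to the first projection $\pi_1 : U \times \P V \to U$. Each irreducible component $A_j$ is either \emph{horizontal}, meaning $\pi_1(A_j) = U$, or \emph{vertical}, meaning $\pi_1(A_j) = \{\lambda_j\}$ for some $\lambda_j \in U$; the vertical components are the ``bubbles'', each contained in a fiber $\{\lambda_j\} \times \P V$. Only finitely many bubbles can occur, because each one contributes at least the Fubini--Study area of a projective line to the mass of $T$, which is bounded by the uniform volume bound on the $\Gamma_i$.

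To extract a single-valued graph from the horizontal part, I use that each $\Gamma_i$ projects biholomorphically onto $U$ via $\pi_1$, so $\deg(\pi_1|_{\Gamma_i}) = 1$. Intersecting $T$ with a generic fiber $\{\lambda\} \times \P V$ away from the bubble parameters yields total horizontal multiplicity equal to $1$. This forces the existence of exactly one horizontal component $A_0$, carried with multiplicity $1$, such that $\pi_1|_{A_0} : A_0 \to U$ is a biholomorphism. The composition $f := \pi_2 \circ (\pi_1|_{A_0})^{-1} : U \to \P V$ is then a holomorphic map whose graph is precisely $A_0$, so $A = \Gamma(f) \cup (\text{bubbles})$ as required.

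Finally, to establish local uniform convergence away from the bubbles, let $\{\lambda_1,\dots,\lambda_m\}$ be the bubble parameters and fix a compact $K \subset U \setminus \{\lambda_1,\dots,\lambda_m\}$. On $\pi_1^{-1}(K)$ the current $T$ restricts to the current of integration on the graph of $f|_K$, so $\Gamma_{i_k} \cap \pi_1^{-1}(K)$ converges in the Hausdorff topology to this graph. By compactness of $\P V$, this Hausdorff convergence upgrades to uniform convergence of $f_{i_k}$ to $f$ on $K$: if some subsequence $f_{i_{k_\ell}}(\mu_\ell)$ remained at distance $\ge \varepsilon$ from $f(\mu_\ell)$, one could extract a limit point in $A \cap \pi_1^{-1}(K)$ lying outside the graph of $f|_K$, a contradiction. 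The step I expect to be the main obstacle is the multiplicity-counting argument ensuring that the horizontal limit is a single graph of multiplicity one rather than a higher-sheeted branched cover arising from local degeneration; controlling how mass concentrates in vertical fibers in the limit, while tracking intersection numbers with generic fibers, is the technical heart of the argument.
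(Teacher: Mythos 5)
Your proposal is correct and follows essentially the same route as the paper's proof: Bishop-type compactness to obtain a pure one-dimensional analytic limit, finiteness of the vertical ``bubbles'' via a lower volume bound for algebraic curves in a fiber, and the fact that each graph meets every fiber with intersection number one, which in the limit forces a single horizontal component of multiplicity one that is the graph of a holomorphic map, with uniform convergence away from the bubbles. The only difference is one of formulation: you work with weak limits of currents of integration (Siu/King), while the paper works directly with convergence of analytic sets and the continuity of the intersection index as in Chirka.
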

This lemma gives
a holomorphic map
$ f_{\xi} : U \to \P V $
such that 
$ (f_{\xi, n_{k} }) $
converges
(up to a subsequence)
to $ f_{\xi} $
uniformly on compact set
of $ U $ with a finite number
of points removed.
For every $ \lambda_{q} $,
except for a finite number,
we have
$ f_{\xi} (\lambda_{q})
=
\theta(\xi, \lambda_{q})$.
As $ (\lambda_q) $ is dense,
it means that $ f_{\xi} $
is the unique holomorphic
continuation of the assigment
$ \lambda_{q}
\mapsto
\theta(\xi, \lambda_{q}) $.
In particular
$ f_{\xi} $
is
the only cluster value of
$ (f_{\xi, n}) $.
For
$ \xi \in \Omega_{2} $
and
$ \lambda \in U $ 
we can then define
$ \theta(\xi, \lambda)
=
f_{\xi} (\lambda)
$.
It is easily checked
that $ \theta $
satisfies the
required properties.

\begin{proof}[Proof of Lemma \ref{lem:bishop}]
	By a result of Bishop
	(see \cite[15.5]{chirkaComplexAnalyticSets1989}),
	a sequence
	of analytic sets of pure dimension 1
	with uniformly bounded volume
	converges (up to a subsequence)
	to an analytic set of pure dimension 1.
	Denote by $ A $ such an analytic set
	for the sequence $ (\gamma_{i}) $.
	It is of finite volume.

	Pick a fiber $ F = F_{z} = z \times \P V $.
	There is 2 cases: either
	$ A \cap F $ is finite,
	or 
	$ A \cap F = A_{z} $
	is a an analytic set of pure dimension 1.
	In the second case,
	$ A_{z} $ is an algebraic set in $ F = \P V $,
	so its volume is bounded below by
	$ \frac{\pi^{d} }{d!} $.
	In particular, this case can occurs
	for only finitely many $ z $.
	Denote by $ B $ this finite set.

	In the first case by
	the continuity of the intersection index
	(Prop. 2 of 
	\cite[12.2]{chirkaComplexAnalyticSets1989}),
	because the intersection index of
	$ \gamma_{i}$ and $ F $ is 1
	for all $ i $,
	the intersection index of
	$ A $ and $ F $ is 1.
	It means that
	the intersection of $ A $
	and $ F $ is a single point
	with multiplicity 1.
	In particular,
	this point is a point of irreducibility
	for $ A $.
	
	Consider $ (A_{j}) $
	the irreducible components
	of $ A $.
	Each $ A_{z} $
	for $ z \in B $ 
	is a
	union of irreducible
	components, that we call vertical.
	Let $ A_{j} $ be a non-vertical
	component.
	For some $ z_{0} \notin B $,
	$ A_{j} \cap F_{z_{0} } $ is not empty,
	so it is a singe point.
	The intersection index
	of $ A_{j} $ and $ F_{z_{0} } $
	depends only on the homology class
	of $ F_{z_{0} } $, in particular
	it doesn't depend on $ z_{0} $,
	so when 
	$ A_{j} $ and $ F_{z} $
	intersect at isolated points,
	their intersection is a single points.
	In fact $ A_{j} $ always intersects $ F_{z} $ 
	at isolated points: otherwise if would be
	completly contained in $ F_{z} $ and
	would be a vertical component.

	This imply that there is only one
	non-vertical irreducible component
	that we denote by $ A_{0} $.
	We have seen that it intersects every
	fiber transversally at a single point.
	By \cite[3.3, Prop. 3]{chirkaComplexAnalyticSets1989},
	$ A_{0} $ is the graph of a holomorphic map
	$ f $.
	By the definition of convergence of analytic
	set (see \cite[15.5]{chirkaComplexAnalyticSets1989}),
	the sequence $ (f_{i}) $
	converges
	to $ f $,
	uniformly
	on compact set not meeting $ B $.
\end{proof}

By using the pluriharmonicity of $ \chi_{d+1} $
and doing the same construction as in the previous
section we obtain a  holomorphic variation of
the Poisson boundary $ \PP^{*} $
associated to the dual random walk
in $ \P V^{*} $
denoted by:
\begin{equation}
	\theta^{*}:
	\PP^{*} \times U
	\to
	\P V^{*} 
,
\end{equation}
equivariant with respect
to the dual representation
$ \rho^{*} $ 
.

\subsection{Intersections of curves and hyperplanes}
\label{sub:intersections}

In this section we prove:
\begin{propo}
\label{prop:intersection}
	There exists
	a full measure subset
	$ \mathcal{D} \subset
	\PP \times \PP^{*} $
	such that for
	$ (\xi, \xi^{*}) \in \mathcal{D} $ 
	the graphs
	$ \theta_{\xi} $
	and $ \theta_{\xi^{*} }^{*} $ 
	don't intersect.
\end{propo}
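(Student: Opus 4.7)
My plan is to recast the proposition as showing that an explicit random holomorphic function on $U$ has no zeros almost surely. After the reduction of Section \ref{ssub:reduction_one_dim}, $U$ is a disk; on a co-null subset of $\PP \times \PP^*$ pick global holomorphic lifts $v(\xi, \lambda) \in V$ of $\theta(\xi, \lambda)$ and $\phi(\xi^*, \lambda) \in V^*$ of $\theta^*(\xi^*, \lambda)$, and set
\[
h(\xi, \xi^*, \lambda) := \phi(\xi^*, \lambda)\bigl(v(\xi, \lambda)\bigr),
\]
a holomorphic function of $\lambda$ whose zero set is exactly $\{\lambda \in U : \theta(\xi, \lambda) \in \theta^*(\xi^*, \lambda)\}$. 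The proposition is thus equivalent to the claim that $h(\xi, \xi^*, \cdot)$ is nowhere vanishing on $U$ for a.e.\ $(\xi, \xi^*)$.

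I would first check that $h$ is not identically zero almost surely. Fix $\lambda_0 \in U$; strong irreducibility ensures that $\nu_{\lambda_0}$ gives no mass to any hyperplane, so by Fubini
\[
\mathbb{P}\bigl[\theta(\xi, \lambda_0) \in \theta^*(\xi^*, \lambda_0)\bigr] = \int \nu_{\lambda_0}(H)\, d\nu^*_{\lambda_0}(H) = 0.
\]
For such $(\xi, \xi^*)$ the zero set of $h(\xi, \xi^*, \cdot)$ is discrete in $U$, and the proposition reduces to showing this discrete set is empty. I would prove this by showing that the expected number of zeros on any compact $K \subset U$ vanishes. By Poincaré--Lelong and Fubini,
\[
\mathbb{E}[N_K] = \int_K dd^c_\lambda \mathbb{E}\bigl[\log|h(\xi, \xi^*, \lambda)|\bigr].
\]

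To evaluate this expectation, I would approximate $v$ and $\phi$ by the iterates from Section \ref{sub:hol_poisson}: fix $v_0 \in V$ and $\phi_0 \in V^*$ with $\phi_0(v_0) \neq 0$ and set $v_n(\xi, \lambda) := \rho_\lambda(\gamma^{(n)}) v_0$ and $\phi_n(\xi^*, \lambda) := \rho^*_\lambda((\gamma^*)^{(n)}) \phi_0$, so that
\[
h_n(\xi, \xi^*, \lambda) := \phi_n(v_n) = \phi_0\bigl(\rho_\lambda(\delta_n) v_0\bigr), \qquad \delta_n := \bigl((\gamma^*)^{(n)}\bigr)^{-1}\, \gamma^{(n)},
\]
where $\delta_n$ has law $\check\mu^n \ast \mu^n$. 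Combining Lemma \ref{lem:speed_conv} with its dual analogue yields
\[
\mathbb{E}\bigl[\log|h_n(\lambda)|\bigr] = n\bigl(\chi_1(\lambda) - \chi_{d+1}(\lambda)\bigr) + O(1)
\]
uniformly on compacts, whose $dd^c$ has mass $O(1)$ uniformly in $n$ since $T_1 = T_{d+1} = 0$ on $U$. Passing to the limit via Lemma \ref{lem:bishop} (where $h_n \to h$ off a finite set of bubble points) and sharpening the $O(1)$ bound using the exponential contraction of both the primal and dual transition operators from \cite{lepageTheoremesLimitesPour1980}, one concludes $\mathbb{E}[N_K] = 0$.

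The main obstacle will be this last sharpening: turning a uniform $O(1)$ bound on $dd^c \mathbb{E}[\log|h_n|]$ into a vanishing limit. This requires a delicate analysis of the sub-leading term in the expansion of the logarithmic matrix coefficient, exploiting the joint contraction of the transition operators on $\P V$ and $\P V^*$ together with the absence of bifurcation at the extremal Lyapunov exponents $\chi_1$ and $\chi_{d+1}$.
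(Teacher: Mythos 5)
There is a genuine gap, and you have correctly identified where it sits: the passage from a uniform $O(1)$ bound on the mass of $dd^c\,\mathbb{E}\bigl[\log|h_n|\bigr]$ to the vanishing of $\mathbb{E}[N_K]$ is not a ``sharpening'' that can be extracted from the contraction estimates — it is the entire content of the proposition, and your scheme contains no mechanism for it. For each finite $n$ the function $\lambda \mapsto \phi_0\bigl(\rho_\lambda(\delta_n)v_0\bigr)$ genuinely does have zeros, and their expected number over a compact stays bounded but has no reason to tend to $0$: the sub-leading term in $\mathbb{E}[\log|h_n|]$ converges to a non-harmonic correction whose $dd^c$ is precisely the expected distribution of intersection points, i.e.\ the quantity you are trying to show vanishes. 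Moreover the limit interchange itself is problematic: $h_n \to h$ only along subsequences, off bubbles, and after normalizing the lifts, and Hurwitz-type semicontinuity of zero counting only gives $N_K(h) \le \liminf N_{K'}(h_n)$, which yields finiteness (essentially re-deriving Proposition \ref{th:bounded_vol}) but never zero. Your first step (ruling out identical vanishing via properness of $\nu_{\lambda_0}$ and Fubini) is fine and matches an observation in the paper, but after that the approaches diverge and yours stalls exactly where the real work begins.

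The paper's proof uses two ideas absent from your proposal. First, the intersection count $i_D(\xi,\xi^*)$ over a relatively compact $D$ is invariant under the diagonal $\Gamma$-action because $\theta$ and $\theta^*$ are equivariant, so Kaimanovich's double ergodicity theorem for the Poisson boundary makes it almost surely constant; letting $D$ vary, the intersections are therefore governed by a deterministic, locally finite, integer-valued measure on $U$, i.e.\ atoms on a discrete set $F$. Second, an atom at $p \in F$ is excluded by a rigidity argument: if for a.e.\ $(\xi,\xi^*)$ the graphs met at $p$, then fixing $\xi^*$ and using $\nu_p = \theta(p,\cdot)_* P$ would force the support of $\nu_p$ into the single hyperplane $\ker\theta^*(p,\xi^*)$, contradicting the fact that the stationary measure charges no proper linear subspace. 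Note that the properness ingredient you do use appears here, but only becomes decisive once ergodicity has localized the (potential) intersections at deterministic points; without that localization, no expectation computation of the kind you outline forces the count to zero. If you want to salvage your analytic framing, you would still need to inject the diagonal invariance of the intersection count and double ergodicity — at which point you are reproducing the paper's argument rather than refining the volume estimates.
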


Fix a relatively compact open subset
$ D $ inside $ U $.
For
$ (\xi, \xi^{*})
\in
\PP \times \PP^{*} $
define
$ i_{D} (\xi, \xi^{*}) $
as the number of intersection points
of the graphs
$ \theta_{\xi} $
and $ \theta_{\xi^{*} }^{*} $
in $ D $.
Remark that either
$ \theta_{\xi} $ 
intersects $ \theta^{*}_{\xi^{*} } $
in a finite number of points
in $ D $
or
$ \theta_{\xi} $ 
is entirely contained in
$ \theta^{*}_{\xi^{*} } $.
Because $ \nu_{\lambda} $
don't charge
any linear subspace,
for almost all $ (\xi, \xi^{*}) $
the latter doesn't happen
and $ i_{D} (\xi, \xi^{*}) $ is finite.

As $ \theta $ and $ \theta^{*} $
are equivariant,
$ i_{D} $ is invariant by the diagonal
action of $ \Gamma $.
By the double ergodicity theoreom
of Kaimanovich
\cite{kaimanovichDoubleErgodicityPoisson2003},
this function is constant almost surely,
equal to a constant we still denote by
$ i_{D} $.

The map $ D \mapsto i_{D} $ extends to
an integer-valued measure on $ U $,
which is finite on relatively compact subsets.
It follows that it is a sum of Dirac masses
with integer coefficients, supported on a
discrete set $ F $.
It means that
there exists a full measure subset
$ \mathcal{D} \subset \PP \times \PP^{*} $
such that
for all $ p \in U \setminus F $,
if $ (\xi, \xi^{*}) \in \mathcal{D} $
then $ \theta_{\xi} $
and
$ \theta^{*}_{\xi^{*}} $
don't intersect.

We show that this set $ F $ is empty.
Let $ p \in F $.
For almost all
$ (\xi,\xi^{*})
\in
\PP \times \PP^{*} $
the graphs $ \theta_{\xi} $
and $ \theta_{\xi^{*} } $ intersect
at $ p $.
Fix a $ \xi^{*} $
and a full measure subset
of
$  \xi \in \Omega \subset \PP $
such that this holds.
Then the image
$ \theta( p, \Omega) $ 
is contained in
the kernel of
$ \theta^{*}(p, \xi^{*}) $.
Because
the stationary measure
$ \nu_{p} $ verifies
$ \nu_{p} =
\theta(p, \cdot)_{*}  P_{e} $,
it implies that the support
of $ \nu_{p} $ is contained
in the kernel of
$ \theta^{*}(p, \xi^{*}) $,
which is absurd because
the stationary measure
doesn't charge linear subspaces.
We have proved the proposition \ref{prop:intersection}.

\subsection{Compactness of the holomorphic variation}
\label{sub:compact_hol_var}

The goal of this section
is to prove Proposition \ref{prop:compact_var}
which asserts that the
maps $ \theta_{\xi} $ 
form a normal family
when $ \xi $ varies
in some full measure subset.

We will use a technical lemma
which asserts that some
space of holomorphic maps
avoiding some hyperplanes is compact.
It is just a slight variation
on a classical result in
complex hyperbolic space,
but we need to detail
some constructions related
to hyperplanes in projective space.

\subsubsection{Some technical results about hyperplanes}
\label{ssub:hyperplanes}

In order to use a result
of \cite[Th. 3.10.27]{kobayashiHyperbolicComplexSpaces1998}
we need to study hyperplanes in
general position.
We first explain what this means,
then how to construct such family
and finaly extend these
definition to graphs
of hyperplanes.

We say that a family of hyperplanes
is in general position if
any $ (d+1) $ of them are
linearly independent.
Let $ (H_{i})_{i=0, \dots, d+1} $ be
a family of $ d+2 $ hyperplanes in
general position.
We call such a family a system of hyperplanes.
There exists a (unique, up to homotheties)
basis of $ V $,
called the normal basis,
such that in the
coordinates of this basis, the hyperplanes
are defined by the equations:
\begin{align}
	(H_{0})&:
	x_{0} = 0 \\
	&\cdots \\
	(H_{d})&:
	x_{d} = 0 \\
	(H_{d+1})&:
	x_{0}+ \dots + x_{d} = 0
	.
\end{align}
We say that the system is in normal form.

The diagonals associated to this system
are the hyperplanes
$ \delta_{I} $, where $ I $ is a subset
of $ \{0, \dots, d \} $ which is not
a singleton neither the entire set,
defined in the normal basis by:
\begin{equation}
	(\delta_{I}):
	\sum_{i \in I}
	x_{i} 
	=0.
\end{equation}
We will also denote $ H_{d+1} $ by
$ \Delta $.

If we work in the dual projective space,
and consider an hyperplane as a point
of the dual projective space,
we remark the following relations:
for all subset $ I $,
$ \delta_{I} $
is the intersection
of the space spanned
by the 
$((H_{j})_{j \notin I}
, \Delta)$
with the space spanned
by the
$(H_{i})_{i \in I}$.
Another way to say it
is that
$\delta_{I}$
is the image of $ \Delta $
by the projection to
$span (H_{i})_{i \in I} $
parallel to
$span (H_{k})_{k \notin I}$.

Now, let's consider
$ (S^{1}, \dots, S^{k}) $
a family of systems of hyperplanes.
We say that this family is
in general position if
the family of all associated
hyperplanes and all the associated
diagonals are in general position.
We can construct such family
in the following generic way:

\begin{lemma}
\label{lem:generic_systems}
	Given an integer $ k $
	we can construct
	by induction
	a family of systems of hyperplanes
	$ (S^{1}, \dots, S^{k}) $,
	and $ S^{j} = (H_{i}^{j})_{i=0, \dots, d+1} $,
	in general position, in the following way:
	Choose any $ H_{1}^{1} \in \P V^{*} $.
	Suppose we have constructed
	the hyperplanes up to some $ H_{i}^{j} $.
	Then for the next hyperplane
	$H_{i+1}^{j}$ 
	or
	$H_{1}^{j+1}$,
	we can choose any hyperplane
	in $ \P V^{*} $ that is not
	in an exceptional set,
	which is a finite union of proper linear subspaces.
\end{lemma}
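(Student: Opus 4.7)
The plan is to proceed by induction on the position of the hyperplane currently being chosen in the total enumeration $H_0^1, H_1^1, \ldots, H_{d+1}^1, H_0^2, \ldots$, showing at each step that the set of hyperplanes destroying general position is contained in a finite union of proper linear subspaces of $\P V^*$. The base case is trivial, as no constraint is imposed on $H_0^1$. For the inductive step, assume the previously chosen collection $\mathcal{H}$ of hyperplanes, together with the collection $\mathcal{D}$ of diagonals associated to the already completed systems, is in general position, and consider adding the next hyperplane $H$.

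First consider the easy case where $H$ does not complete its system, so that no new diagonals are introduced. The requirement is that $H$, together with any $d$-subset $\{K_1,\ldots,K_d\} \subset \mathcal{H} \cup \mathcal{D}$, is linearly independent in $\P V^*$. For each such $d$-subset the bad $H$'s form exactly the projective linear span of $K_1,\ldots,K_d$, a linear subspace of $\P V^*$ of projective dimension at most $d-1$ and hence proper. Since $\mathcal{H} \cup \mathcal{D}$ is finite, the exceptional set is a finite union of proper linear subspaces, as required.

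The remaining case, $H = H_{d+1}^j$ completing the system $S^j$, is more subtle since all the diagonals $\delta_I$ of $S^j$ appear simultaneously with $\Delta := H$. The key observation is that each $\delta_I$ is a \emph{linear} function of $\Delta$: in the basis $(H_0^j, \ldots, H_d^j)$ of $V^*$, if $\Delta = \sum_i \alpha_i H_i^j$ then $\delta_I = \sum_{i \in I}\alpha_i H_i^j$. Any new collinearity condition on $\Delta$ and the $\delta_I$'s therefore becomes a polynomial condition in $(\alpha_0,\ldots,\alpha_d)$. A direct inspection (as in the toy computation for $d=2$, $(H_0,H_1,H_2,\Delta)$) shows that these polynomials factor into linear factors corresponding to the degenerate loci $\{\alpha_i = 0\}$, i.e.\ to hyperplanes where $\Delta$ lies in the span of a subfamily of the $H_i^j$. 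Geometrically, when $\Delta$ enters the span of $d$ of the $H_i^j$, some diagonal $\delta_I$ collapses onto one of the $H_i^j$ (or onto another diagonal), producing precisely the linear relation causing the loss of general position.

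The main obstacle is verifying that each of these determinantal polynomials is nontrivial (not identically zero in $\Delta$) and decomposes into proper linear factors of this geometric type. Nontriviality follows from the inductive hypothesis: if a determinant were identically zero, expanding along the columns indexed by the $\delta_I$'s and specializing $\Delta$ would yield a nontrivial linear dependence among elements of $\mathcal{H} \cup \mathcal{D} \cup \{H_0^j,\ldots,H_d^j\}$, contradicting their general position. The factorization into linear pieces is then obtained by tracing through the explicit formulas for the diagonals in the normal basis, so the exceptional set for $\Delta$ is again a finite union of proper linear subspaces of $\P V^*$. This closes the induction and yields the lemma.
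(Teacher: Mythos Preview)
Your treatment of the easy case (the new hyperplane does not complete a system) is correct and matches the paper. The gap is in the hard case.

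You assert that the determinantal conditions on $\Delta = \sum_i \alpha_i H_i^j$ ``factor into linear factors corresponding to the degenerate loci $\{\alpha_i = 0\}$''. This is not true once the $(d+1)$-subset being tested contains more than one of the new diagonals together with a hyperplane coming from a previous system. For instance, take $d=2$ and let $K = k_0 H_0^j + k_1 H_1^j + k_2 H_2^j$ be a hyperplane already constructed in an earlier system. The dependence locus of $\{K, \delta_{\{0,1\}}, \delta_{\{0,2\}}\}$ is
\[
k_0 \alpha_1 \alpha_2 - k_1 \alpha_0 \alpha_2 - k_2 \alpha_0 \alpha_1 = 0,
\]
an irreducible conic for generic $K$, not a union of lines $\{\alpha_i = 0\}$. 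So the ``direct inspection'' you invoke does not extend beyond a single system, and the paragraph that follows (nontriviality of the determinant, then factorization ``by tracing through the explicit formulas'') does not repair the claim: the polynomial is nonzero but genuinely irreducible of degree $>1$.

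The paper avoids this by never imposing conditions that involve several new diagonals at once. For each admissible $I$ it asks only that $\delta_I^j$ avoid the span $W$ of every $d$-subset of the previously constructed hyperplanes and diagonals. The key geometric input is that $\delta_I^j$ is the image of $\Delta^j$ under the fixed linear projection $\pi_I$ onto $\mathrm{span}(H_i^j)_{i\in I}$ parallel to $\mathrm{span}(H_k^j)_{k\notin I}$. Hence the bad locus
\[
\{\Delta : \pi_I(\Delta) \in W\} \;=\; \pi_I^{-1}\bigl(W \cap \mathrm{im}\,\pi_I\bigr)
\]
is a \emph{single} linear subspace of $\P V^*$, and it is proper because, by the inductive general-position hypothesis, $W$ meets $\mathrm{im}\,\pi_I = \mathrm{span}(H_i^j)_{i\in I}$ transversally. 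No factorization argument is needed. Conditions mixing several $\delta_I^j$ from the same system are not imposed; they are also not used in the application (Proposition~\ref{prop:compact_var}), which only requires that diagonals drawn from \emph{different} systems be in general position.
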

\begin{proof}
	We choose any $ H_{1}^{1} \in \P V^{*} $.
	Suppose we have constructed
	the hyperplanes up to some $ H_{i}^{j} $,
	with an exceptional set $ E_{ij} $.

	If $ i<d $, we can choose for $ H_{i+1}^{j} $
	any hyperplane which is in general position
	with all the hyperplanes already constructed
	and the diagonals hyperplanes associated to the systems
	already constructed.
	So we add to the exceptional set the linear
	spaces spanned by any $ d $ of theses hyperplanes.
	The same applies for $i=d+1$ to construct
	$ H_{1}^{j+1} $.

	If $ i=d $, we have to choose
	$ H_{d+1}^{j} = \Delta^{j} $.
	Fix an admissible subset
	of indices $ I $.
	The choice of $ \Delta^{j} $ determines
	the diagonal $ \delta_{I}^{j} $,
	explicitely
	$\delta_{I}^{j}$
	is is image of $ \Delta^{j} $
	by the projection to
	$span (H_{i}^{j})_{i \in I} $
	parallel to
	$span(H_{k}^{j})_{k \notin I}$.
	We want $ \delta_{I}^{j} $
	to be in general position with all
	the already constructed hyperplane,
	that is to avoid the spaces spanned
	by any $ d+1 $ of them.
	These spaces intersect
	$span(H_{i}^{j})_{i \in I} $
	tranversally, because
	the previous hyperplanes
	and diagonals are
	all in general position.
	So the inverse images of the spaces
	by the projection are proper subspaces
	and we just have to add them to the exceptional set.
\end{proof}

Remark that this lemma
imply that being a system,
or a family of systems of hyperplanes
is an open condition
in the appropriate space.

\subsubsection{Compactness of maps avoiding varying hyperplanes}
\label{ssub:compactness}

Let $ U $ be the unit disk in $ \C $.
A graph of hyperplane $ H $
(above $ U $)
is a holomorphic map $ H: U \to \P V^{*} $
where we confuse $ H(\lambda) $ and the
hyperplane it defines in $ \P V $.
We define in the same way
graph of linear subspaces of any dimension.

We extend all the definitions of
the section \ref{ssub:hyperplanes}
about hyperplanes
to graphs of hyperplanes in the obvious way.

Given a graph of subspaces $ K $, we say
that a sequence $ f_{n} $ of maps $ U \to \P V $
converges to $ K $ if the image of any compact
by $ f_{n} $ 
is contained in any neighborhood of $ K $
for $ n $ large enough.

We say that a subspace $ F $ of $ Hol(U, \P V ) $
is relatively compact modulo a graph of subspaces
$ K $ if any sequence of elements of $ F $ is either
convergent up to a subsequence, or convergent to $ K $.

\begin{lemma}
	\label{lem:hyper_compact}
	Let
	$ (H_{k})_{k=0,\dots,d+1} $,
	be a graph of
	systems of hyperplanes
	above
	$ U $.
	Let $ F $ be a set of holomorphic
	maps $ U \to \P V $.
	Assume that any $ f \in F $
	avoids all the $ H_{k} $.
	Then $ F $ is relatively compact
	in $ Hol(U, \P V) $ modulo the
	diagonals hyperplanes of $ (H_{k}) $.
\end{lemma}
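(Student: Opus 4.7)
The strategy is to reduce the statement to the classical theorem of Kobayashi (\cite[Th.~3.10.27]{kobayashiHyperbolicComplexSpaces1998}): the complement in $\P V$ of $d+2$ \emph{fixed} hyperplanes in general position is hyperbolic modulo the union of the associated diagonals. By the standard Brody reparametrization argument, hyperbolicity modulo a subset $Z$ implies that any sequence of holomorphic maps $U \to \P V$ into the complement admits a subsequence that either converges in $Hol(U, \P V)$ or converges to $Z$ in the sense defined just above the statement. The plan is therefore to reduce to this fixed case by a holomorphic change of coordinates.

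To that end I would construct a holomorphic family $A \colon U \to GL(V)$ of linear automorphisms, well defined up to scalar, such that $A(\lambda)$ sends the graph of systems $(H_k(\lambda))_{k=0,\dots,d+1}$ to the fixed system in normal form. Since $U$ is a disk, every line bundle on $U$ is trivial, so each graph of hyperplanes $H_k$ admits a global holomorphic defining form $\ell_k \colon U \to V^*$. The general position hypothesis guarantees that $(\ell_0(\lambda), \dots, \ell_d(\lambda))$ is a basis of $V^*$ for every $\lambda$, and that, expanding a defining form of $H_{d+1}(\lambda)$ on this basis, all coefficients $c_i(\lambda)$ are nowhere vanishing on $U$. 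After rescaling $\ell_i$ by $c_i$, the $d+2$ forms put the system in normal form; taking $A(\lambda)$ to be the change of basis from $(\ell_i(\lambda))$ to a fixed reference dual basis yields the required holomorphic family. Because the diagonals of a system were explicitly defined in the normal basis by equations $\sum_{i\in I} x_i = 0$, the map $A(\lambda)$ automatically carries the graph diagonals of $(H_k)$ to the fixed diagonals of the normal system.

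Once $A$ is in hand, push the family forward: $\tilde F := \{\lambda \mapsto A(\lambda) \cdot f(\lambda) \mid f \in F\}$ consists of holomorphic maps $U \to \P V$ avoiding the fixed $d+2$ hyperplanes. Kobayashi's theorem, combined with the Brody argument, gives the relative compactness of $\tilde F$ in $Hol(U, \P V)$ modulo the fixed diagonals. Transporting back through $A(\cdot)^{-1}$, which is continuous for uniform convergence on compact subsets of $U$ and sends fixed diagonals to the graph diagonals of $(H_k)$, produces the desired conclusion for $F$.

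The main obstacle is the translation between Kobayashi's metric ``hyperbolic modulo'' formulation and the sequential ``relatively compact modulo'' formulation used in the excerpt: extracting subsequences with a uniform behaviour on an exhaustion of $U$, and ruling out mixed limits where part of the image of a sequence converges to a holomorphic map while another part accumulates on the diagonals, require a careful application of Brody's extraction. This step is classical but is the place where one must be most careful.
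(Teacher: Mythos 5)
Your proposal is correct and follows essentially the same route as the paper: holomorphically normalize the varying system via a family of projective automorphisms $\Phi_{\lambda}$ putting $(H_k(\lambda))$ in normal form, apply Kobayashi's theorem \cite[Th.~3.10.27]{kobayashiHyperbolicComplexSpaces1998} to the resulting fixed system, and transport back through $\Phi_{\lambda}^{-1}$. The extra detail you give on constructing the automorphism family from global defining forms on the disk, and on the taut/``relatively compact modulo'' formulation of Kobayashi's result, only fills in steps the paper leaves implicit.
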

\begin{proof}
	For each $ \lambda \in U $,
	because the $ H_{k} (\lambda) $
	are in general position,
	we can find
	a projective automorphism
	$ \Phi_{\lambda} $,
	(which vary holomorphically with $ \lambda $)
	which put the system $ (H_{k}(\lambda)) $ in normal form.
	By replacing
	$ F $ by
	$ \left\{
	\Phi_{\lambda} f;
	f \in F
	\right\} $
	and the $ H_{k} $
	by the $ \Phi_{\lambda} H_{k} $,
	we can consider
	that the graphs of hyperplanes
	are constant.

	The theorem \cite[Th. 3.10.27]{kobayashiHyperbolicComplexSpaces1998}
	states the space of holomorphic maps
	$ U \to \P V $ avoiding $ d+2 $ hyperplanes
	in general position is relatively compact modulo the
	diagonals.
	Composing everything with $ \Phi_{\lambda}^{-1} $,
	we get the result.
\end{proof}

Now we can prove:

\begin{propo}
\label{prop:compact_var}
	There exists a full measure subset
	$ \Omega $ of the Poisson boundary $ \PP $ 
	such that the set
	of holomorphic maps
	\begin{equation}
		\Theta
		=
		\left\{ 
			\theta(\xi, \cdot);
			\xi \in \Omega
		\right\}
		,
	\end{equation}
	is relatively compact in
	$ Hol(U_{0}, \P V) $,
	where $ U_{0}  $
	a Zariski dense open set
	of $ U $.
	Moreover, $ U_{0} $
	can be chosen to contain
	any relatively compact
	subset of $ U $.
\end{propo}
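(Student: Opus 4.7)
The plan is to produce, using the dual holomorphic variation $\theta^{*}$, a family of $d+2$ graphs of hyperplanes in general position that is disjoint from every graph $\theta_{\xi}$ for $\xi$ in a full measure subset of $\PP$, and then to invoke Lemma \ref{lem:hyper_compact}.

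First I would pick the dual Poisson points by induction. Using Lemma \ref{lem:generic_systems}, I choose $\xi_{0}^{*},\dots,\xi_{d+1}^{*}\in\PP^{*}$ so that the graphs $\theta^{*}_{\xi^{*}_{k}}\colon U\to\P V^{*}$ assemble into a graph of systems of hyperplanes in general position at every $\lambda$ outside a discrete subset $F\subset U$. At each inductive step the admissible dual points form the complement of a finite union of proper linear subspaces in $\P V^{*}$; since the stationary measure $\nu^{*}_{\lambda}$ does not charge proper linear subspaces (for any fixed $\lambda$ in a dense set), and since the Furstenberg map pushes $\bar\mu$ to $\nu^{*}_{\lambda}$, a full-measure choice is available. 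By starting the construction so that the relevant determinants are non-vanishing at a prescribed compact set, I can ensure that the Zariski-open set $U_{0}=U\setminus F$ contains any prescribed relatively compact subset of $U$.

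Next, I apply Proposition \ref{prop:intersection}, which gives a full measure set $\mathcal{D}\subset\PP\times\PP^{*}$ of pairs with disjoint graphs. By Fubini, the set
\[
\Omega=\{\xi\in\PP:(\xi,\xi_{k}^{*})\in\mathcal{D}\text{ for all }k=0,\dots,d+1\}
\]
is of full measure in $\PP$. For $\xi\in\Omega$, the graph of $\theta_{\xi}$ on $U_{0}$ avoids the $d+2$ graphs of hyperplanes $\theta^{*}_{\xi_{k}^{*}}$, which are in general position over $U_{0}$. Lemma \ref{lem:hyper_compact} then gives that the family $\Theta=\{\theta_{\xi}(\cdot,\cdot):\xi\in\Omega\}$ is relatively compact in $Hol(U_{0},\P V)$ modulo the diagonal hyperplanes of the system $(\theta^{*}_{\xi^{*}_{k}})$.

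The main obstacle is removing the "modulo the diagonals" clause: I must exclude that any subsequence $\theta_{\xi_{n}}$ degenerates onto one of the finitely many diagonal graphs of hyperplanes $\delta_{I}$. To handle this, I iterate the construction, adjoining further dual Poisson points $\xi_{d+2}^{*},\xi_{d+3}^{*},\dots$ chosen (again via Lemma \ref{lem:generic_systems} and the non-charging property of $\nu^{*}_{\lambda}$) so that all the diagonals of the original system become hyperplanes that must be avoided by $\theta_{\xi}$ for $\xi$ in a further full-measure refinement of $\Omega$. The non-intersection with these extra hyperplanes is again ensured by Proposition \ref{prop:intersection} applied to each $\xi_{k}^{*}$. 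Once every diagonal of the augmented system is itself avoided, any limit map of a sequence in $\Theta$ cannot be contained in a diagonal, so Lemma \ref{lem:hyper_compact} yields genuine relative compactness of $\Theta$ in $Hol(U_{0},\P V)$, proving the proposition.
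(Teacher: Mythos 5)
Your first two steps (choosing dual Poisson points whose graphs form a system of hyperplanes in general position over a Zariski-dense $U_{0}$, intersecting full-measure sets via Proposition \ref{prop:intersection}, and applying Lemma \ref{lem:hyper_compact}) follow the paper's argument and are fine. The genuine gap is in your mechanism for removing the clause \say{modulo the diagonals}. The diagonals $\delta_{I}$ are \emph{determined} by the system you already chose: they are specific graphs of hyperplanes built from the normal form of $(\theta^{*}_{\xi^{*}_{0}},\dots,\theta^{*}_{\xi^{*}_{d+1}})$. You cannot \say{adjoin further dual Poisson points so that the diagonals become hyperplanes that must be avoided}: Proposition \ref{prop:intersection} only gives non-intersection with graphs of the form $\theta^{*}_{\xi^{*}}$, and there is no way to realize (or even approximate, at a single parameter) a prescribed diagonal graph by such a dual Poisson graph — its trace need not even lie in the support of $\nu^{*}_{\lambda_{0}}$. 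Moreover, even if you could arrange that every map in $\Theta$ avoids the diagonals as hyperplanes, that would not rule out degeneration onto them: a sequence of maps omitting a hyperplane can perfectly well converge to a map (or a degenerate limit) contained in that hyperplane — this is exactly the degenerate alternative in the Hurwitz/Kobayashi-type statement, and it is the case the \say{modulo diagonals} clause describes. So your final claim \say{once every diagonal is avoided, no limit can be contained in a diagonal} is false, and the conclusion does not follow.

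The paper's proof resolves this differently: it constructs, via Lemma \ref{lem:generic_systems}, not one but $d+1$ systems $(S^{1},\dots,S^{d+1})$ of dual Poisson graphs whose hyperplanes \emph{and all their diagonals} are in mutual general position. If a sequence in $\Theta$ degenerates onto a diagonal $\delta^{1}$ of the first system, one applies Lemma \ref{lem:hyper_compact} to the second system, forcing the sequence (if still non-convergent) into $\delta^{1}\cap\delta^{2}$, a graph of codimension-$2$ subspaces; iterating through the $d+1$ systems drives the limit into an empty intersection, a contradiction, yielding genuine relative compactness. You would need to replace your augmentation step by this multi-system, codimension-increasing argument (or an equivalent device). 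As a secondary point, your treatment of the \say{moreover} clause (forcing the general-position determinants to be nonvanishing on a prescribed compact by a choice made at one parameter) is not justified; the paper instead notes that $K\setminus U_{0}$ is finite and reruns the construction centered at each of these finitely many points.
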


\begin{proof}
	Choose a
	$ \lambda_{0} \in U$.
	The stationary measure 
	$ \nu_{\lambda_{0} }^{*} $ 
	at the parameter $ \lambda_{0} $ for the dual
	representation doesn't charge any linear subspace
	of $ \P V^{*}  $.

	By using Lemma \ref{lem:generic_systems}
	we can construct a family of systems
	in general position
	$ (S^{1}, \dots, S^{d+1}) $ 
	with every hyperplanes of these
	systems in the support of $ \nu_{\lambda_{0} }^{*} $,
	because it doesn't charge linear subspaces.

	Because the image of
	$ \theta^{*} (\PP^{*}, \lambda_{0}) $
	is dense in the support of
	$ \nu_{\lambda_{0} }^{*} $,
	we can find trajectories such that their
	traces at $ \lambda_{0} $
	are close enough to the hyperplanes
	of
	$ (S^{1}, \dots, S^{d+1}) $,
	and so they also form a family
	of systems of hyperplanes in general position.

	We fix such trajectories,
	$ \xi_{i}^{j*} \in \PP^{*} $.
	Let $ U_{0} $ be the subspace
	such that for all $ \lambda \in U_{0}  $ the family
	of systems
	$ \theta^{*} (\xi_{i}^{j*}, \lambda) $ is still
	in general position.
	By the result of
	Lemma \ref{lem:generic_systems},
	the complement of $ U_{0} $ is an analytic
	set of positive codimension,
	so $ U_{0} $ is a Zariski dense open set of $ U $.

	By the Proposition \ref{prop:intersection},
	there exists a full measure subset
	$ \Omega $ of the Poisson boundary $ \PP $,
	such that the graphs $ \theta(\xi, \cdot) $
	doesn't intersect the graphs of hyperplanes
	$ \theta^{*} (\xi_{i}^{1*}, \cdot) $
	over $ U_{0} $,
	for $ \xi \in \Omega $.

	We can now apply the Lemma \ref{lem:hyper_compact}
	to the system of graphs of hyperplanes
	$ (\theta^{*}_{\xi_{i}^{1*}}) $.
	We deduce that the family
	$ (\theta_{\xi}))_{\xi \in \Omega} $ 
	is relatively compact
	modulo the diagonal
	$ \Delta^{1} $.

	We show now that it is
	relatively compact (not only modulo the diagonal).
	Consider a sequence $ \theta_{\xi_{k}} $
	which converges to a diagonal graph
	$ \delta^{1}  $.
	We apply again Lemma \ref{lem:hyper_compact},
	but now to
	$ (\theta^{*}_{\xi_{i}^{2*}}) $.
	Then this sequence is also relatively
	compact modulo the diagonal
	$ \Delta^{2} $ associated to
	this new system.
	Now $ \theta_{\xi_{k}} $
	is either convergent (up to a subsequence)
	or convergent to a $ \delta^{2} $,
	and so convergent to
	$ \delta^{1} \cap \delta^{2} $,
	which is a graph of linear subspaces of codimension 2
	(by the general positions of the diagonal hyperplanes).
	Iterating the argument at most $ d+1 $ times,
	either the sequences is convergent up to a
	subsequence or it must converges
	to an empty set, which is impossible.

	We prove that $ U_{0} $
	can be chosen to contain
	any relatively compact
	subset of $ U $.
	Given a relatively compact
	subset $ K \subset U $,
	the difference $ K \setminus U_{0} $
	consists of finitely many points.
	Arround each of these points $ \lambda $ ,
	repeat the previous argument
	with $ \lambda $ instead of $ \lambda_{0} $.
	We obtain a full measure subset $ \Omega $
	such that 
	$ 
	(\theta_{\xi})_{\xi \in \Omega}
	$ 
	is relatively compact over
	a neighborhood of each of these points,
	so over $ K $.
\end{proof}

All the results of this section can
be done dually by exchanging the role
of $ \theta $ and $ \theta^{*} $.
Consequently we can state
the dual version of
proposition \ref{prop:compact_var}:
\begin{propo}
\label{prop:compact_var_dual}
	There exists a full measure subset
	$ \Omega^{*} $
	of the dual Poisson boundary
	$ \PP^{*}  $ 
	such that the set
	of holomorphic maps
	\begin{equation}
		\g{graphs_dual} 
		=
		\left\{ 
			\theta^{*}
			(\xi^{*} , \cdot);
			\xi^{*} \in \Omega^{*} 
		\right\}
		,
	\end{equation}
	is relatively compact in
	$ Hol(U_{0}, \P V^{*} ) $,
	where $ U_{0} $
	a Zariski dense open set
	of $ U $.
\end{propo}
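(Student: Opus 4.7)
The plan is to repeat the proof of Proposition \ref{prop:compact_var} with the roles of $\theta$ and $\theta^*$, and of $\P V$ and $\P V^*$, exchanged. The key observation enabling this is that the non-intersection condition at the heart of Proposition \ref{prop:intersection} is symmetric: saying $\theta(\xi,\lambda) \notin \theta^*(\xi^*,\lambda)$ is the same as saying that $\theta^*(\xi^*,\lambda)$ does not lie on the hyperplane of $\P V^*$ defined by the dual of $\theta(\xi,\lambda) \in \P V \simeq (\P V^*)^*$. Thus a family of graphs $\theta_{\xi_i^j}$ in $\P V$ can be reread as a family of graphs of hyperplanes in $\P V^*$.

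Concretely, I would first fix $\lambda_0 \in U$ and invoke the fact that the stationary measure $\nu_{\lambda_0}$ on $\P V$ does not charge any proper linear subspace. By Lemma \ref{lem:generic_systems}, applied to $\P V^*$ with the duality above, we can construct a family of systems of hyperplanes in $\P V^*$ in general position whose defining dual points all lie in $\supp \nu_{\lambda_0}$. Since $\theta(\PP, \lambda_0)$ is dense in $\supp \nu_{\lambda_0}$ (because $\nu_{\lambda_0} = \theta(\cdot,\lambda_0)_* P$), we may perturb these defining points into $\theta(\PP, \lambda_0)$ while preserving general position, which is an open condition. This yields trajectories $\xi_i^j \in \PP$ whose graphs $\theta(\xi_i^j, \cdot)$, viewed dually, form a family of systems of hyperplanes in $\P V^*$ in general position at $\lambda_0$, and general position then persists on a Zariski dense open set $U_0 \subset U$ that can be taken to contain any prescribed relatively compact subset.

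Next, Proposition \ref{prop:intersection} provides a full measure subset $\Omega^* \subset \PP^*$ such that for every $\xi^* \in \Omega^*$ and every pair $(i,j)$, the graphs $\theta_{\xi_i^j}$ and $\theta^*_{\xi^*}$ are disjoint over $U_0$. Translating through the duality, this means that every $\theta^*_{\xi^*}$ with $\xi^* \in \Omega^*$ avoids each of the $d+2$ hyperplane graphs $\theta_{\xi_i^{1}}, \dots, \theta_{\xi_i^{d+2}}$ of the first system over $U_0$. Lemma \ref{lem:hyper_compact} then applies to $\P V^*$ and gives relative compactness of $\{\theta^*_{\xi^*}\}_{\xi^* \in \Omega^*}$ modulo the graph of the first diagonal system. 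Iterating the argument with the remaining systems, as in the proof of Proposition \ref{prop:compact_var}, successively rules out each cluster diagonal, since after at most $d+1$ iterations the intersection of the successive diagonal graphs becomes empty by general position. This yields the required relative compactness in $\mathrm{Hol}(U_0, \P V^*)$.

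I do not expect any genuine obstacle here: all the analytic and dynamical work — the existence of a holomorphic variation of the dual Poisson boundary (which rests on the pluriharmonicity of $\chi_{d+1}$ and Proposition \ref{th:bounded_vol_dual}), the symmetric non-intersection property, and the Kobayashi-style compactness Lemma \ref{lem:hyper_compact} — is already in place. The only care required is the duality bookkeeping in identifying a graph $\theta_{\xi}$ of points of $\P V$ with a graph of hyperplanes of $\P V^*$ when checking the hypotheses of the hyperplane compactness lemma.
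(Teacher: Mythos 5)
Your proposal is correct and is essentially the paper's own argument: the paper disposes of this proposition in one line, saying all results of the section can be dualized by exchanging $\theta$ and $\theta^{*}$, and your write-up simply carries out that dualization explicitly (points of $\P V$ reread as hyperplane graphs in $\P V^{*}$, $\nu_{\lambda_0}$ in place of $\nu_{\lambda_0}^{*}$, then Lemma \ref{lem:generic_systems}, Proposition \ref{prop:intersection} and Lemma \ref{lem:hyper_compact} as before). The only blemish is a harmless indexing slip in naming the $d+2$ hyperplane graphs of the first system; the mathematics is the same as the paper's.
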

We can, and will, fix a set $ U_{0} $
which works for proposition
\ref{prop:compact_var}
and proposition \ref{prop:compact_var_dual}.

Recall that the proposition
\ref{prop:intersection}
gives
$ \mathcal{D} \subset \PP \times \PP^{*} $
of full measure
such that
for
$ (\xi,\xi^{*})
\in
\mathcal{D}$,
the graph
$ \theta_{\xi} $ 
and
the graph of hyperplanes
$ \theta^{*}_{\xi^{*}} $
don't intersect.
We will show
that
an analogous
non-intersection
property
also holds for
every couples of
$ \g{graphs}  \times \g{graphs_dual} $,
not only almost surely.

We consider the closure
$ \bar{\Theta} $
of $ \Theta $ in
$ Hol(U_{0}, \P V) $.
By the proposition
\ref{prop:compact_var},
the closure is compact.
Then, pushing the probability
$ P $
by $ \theta $,
\g{graphs}
is equiped with a
probability measure.
We replace \g{graphs}
by the support of this
measure.
The same can be done
for \g{graphs_dual}
and we have that
for almost all
$ (f, H) \in \g{graphs} \times \g{graphs_dual} $,
$ f $ and $ H $
don't intersect.

\begin{lemma}
\label{lem:intersection}
	Let
	$ (f, H) \in  
	\g{graphs}  \times \g{graphs_dual}  $
	.
	Then either $ f $
	and $ H $ are disjoints,
	or $ f $ is entirely
	contained in $ H $.
\end{lemma}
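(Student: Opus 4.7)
The plan is to argue by contradiction: suppose $(f_0, H_0) \in \bar\Theta \times \bar{\Theta^*}$ satisfies $f_0 \not\subset H_0$ and yet $f_0 \cap H_0 \neq \emptyset$, and derive a contradiction with the almost sure non-intersection statement just recalled. Fix an intersection point $\lambda_0 \in U_0$ and a small closed disk $\bar D \subset U_0$ centered at $\lambda_0$. Since $D$ is simply connected, choose holomorphic lifts $F_0 \colon D \to V \setminus \{0\}$ and $\varphi_0 \colon D \to V^{*} \setminus \{0\}$ of $f_0$ and $H_0$. The incidence function $g_0(\lambda) = \langle \varphi_0(\lambda), F_0(\lambda) \rangle$ is holomorphic on $D$, and since $f_0 \not\subset H_0$ it is not identically zero. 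Shrinking $D$, I may assume that $\lambda_0$ is the unique zero of $g_0$ on $\bar D$, of some multiplicity $m \geq 1$, and that $g_0$ does not vanish on $\partial D$.

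Next I show that the set $B = \{(f,H) \in \bar\Theta \times \bar{\Theta^*} : f \cap H \neq \emptyset\}$ contains a neighborhood of $(f_0, H_0)$. Take any sequence $(f_n, H_n) \to (f_0, H_0)$; by the definitions the convergence is uniform on compact subsets of $U_0$. Pick a base point $\lambda_* \in D$ and propagate the lifts holomorphically from $\lambda_*$ to obtain $F_n, \varphi_n$ on $D$ such that $F_n \to F_0$ and $\varphi_n \to \varphi_0$ uniformly on $\bar D$; hence $g_n := \langle \varphi_n, F_n \rangle \to g_0$ uniformly on $\bar D$. Since $g_0$ is bounded away from zero on $\partial D$, so is $g_n$ for $n$ large, and in particular $g_n \not\equiv 0$ on $D$. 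Hurwitz's theorem then yields that $g_n$ has exactly $m \geq 1$ zeros in $D$ (counted with multiplicity), i.e.\ $f_n \cap H_n \neq \emptyset$, so $(f_n, H_n) \in B$.

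To conclude, recall that $\bar\Theta$ and $\bar{\Theta^*}$ were each replaced by the support of their respective push-forward measures, so $\bar\Theta \times \bar{\Theta^*}$ is the support of the product measure; any non-empty open subset then has strictly positive product mass. The neighborhood of $(f_0, H_0)$ found above is therefore of positive measure, contradicting the fact that $B$ has zero measure. The main technical point is choosing the local lifts $(F_n, \varphi_n)$ coherently under $(f_n, H_n) \to (f_0, H_0)$; this is routine on the simply connected disk $D$ by fixing the value at a base point and solving a trivial holomorphic lifting problem, so no substantial obstacle remains.
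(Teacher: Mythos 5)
Your proof is correct, and its overall architecture is the same as the paper's: show that a proper (isolated) intersection of a pair $(f_0,H_0)$ persists for all pairs in a whole neighborhood of $(f_0,H_0)$ in $\overline{\Theta}\times\overline{\Theta^{*}}$, then note that since both families were replaced by the supports of the pushed-forward measures, such a neighborhood has positive product mass, contradicting the almost-sure non-intersection of Proposition \ref{prop:intersection}. The only real difference is how the persistence step is justified: the paper invokes the continuity of the intersection of analytic sets from Chirka (sec.\ 12.3), whereas you prove it by hand, lifting $f$ and $H$ locally to $V\setminus\{0\}$ and $V^{*}\setminus\{0\}$, forming the incidence function $g(\lambda)=\langle \varphi(\lambda),F(\lambda)\rangle$, and applying Hurwitz/Rouché on a small disk where $g_0$ has an isolated zero and no zero on the boundary. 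This makes the key step elementary and self-contained at the cost of the small bookkeeping issue you flag about choosing the lifts coherently; that issue is indeed harmless, and is most easily dispatched by shrinking the disk $D$ so that $f_0(\bar D)$ and $H_0(\bar D)$ each lie in a single affine chart, in which case the normalized affine lifts of $f_n$ and $H_n$ exist for large $n$ and converge uniformly to those of $f_0$ and $H_0$. With that small simplification your argument is complete and fully equivalent in strength to the paper's.
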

\begin{proof}
	Consider
	$ (f, H) \in  
	\g{graphs} \times \g{graphs_dual}  $
	.
	Suppose that 
	the graph of $ f $
	and the hyperplane graph
	of $ H $ admit a non-trivial
	isolated intersection.
	By the continuity of intersection
	of analytic set
	\cite[sec. 12.3]{chirkaComplexAnalyticSets1989},
	if $ f $ and $ H $
	admit an isolated intersection
	then their exists a neighborhood $ N $
	of $ (f,H) $ 
	in $ \g{graphs} \times \g{graphs_dual} $
	of couple which
	also admit a non-trivial isolated intersection.

	But as $ (f,H) $ is in the support
	of the measure,
	any of its neighborhood has positive measure.
	This contradicts
	the fact that the set non-intersecting
	couples is of full measure.

	We deduce that if $ f $
	and $ H $ intersect
	then the graph of $ f $ 
	is entirely included
	in the graph of $ H $.
\end{proof}

The compactness of
$ \overline{\Theta} $ allows
many constructions,
as in the following lemma.

\begin{lemma}
\label{lem:lot_of_graph}
	Let $ \lambda_{0} \in V $
	and $ z_{0} \in \supp \nu_{\lambda_{0} }  $.
	Then there exists a
	$ f \in \overline{\Theta} $ 
	such that
	$ f(\lambda_{0}) = z_{0} $
	.
\end{lemma}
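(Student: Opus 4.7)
The plan is to realize $\nu_{\lambda_0}$ as the pushforward of the probability measure on $\overline{\Theta}$ by evaluation at $\lambda_0$, and then to produce $f$ by extracting a limit using the compactness of $\overline{\Theta}$.

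First, the map $T : \xi \mapsto \theta(\xi, \cdot)$ sends the full measure subset $\Omega \subset \PP$ into $Hol(U_0, \P V)$, and by construction $\overline{\Theta}$ is the support of the probability measure $T_* P$ on $Hol(U_0, \P V)$. Using the flexibility granted by Proposition \ref{prop:compact_var} (which allows $U_0$ to contain any relatively compact subset of $U$), I may assume $\lambda_0 \in U_0$, so that the evaluation map $e_{\lambda_0} : Hol(U_0, \P V) \to \P V$, $f \mapsto f(\lambda_0)$, is continuous for the topology of uniform convergence on compacta. Since $e_{\lambda_0} \circ T(\xi) = \theta(\xi, \lambda_0)$, the defining property $\nu_{\lambda_0} = \theta(\cdot, \lambda_0)_* P$ of the Furstenberg limit map translates into $\nu_{\lambda_0} = (e_{\lambda_0})_*(T_* P)$.

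Next, take $z_0 \in \supp \nu_{\lambda_0}$ and a countable basis $(V_n)$ of open neighborhoods of $z_0$ in $\P V$. For each $n$, one has $\nu_{\lambda_0}(V_n) > 0$, hence $(T_* P)(e_{\lambda_0}^{-1}(V_n)) > 0$, so the open set $e_{\lambda_0}^{-1}(V_n)$ must meet the support $\overline{\Theta}$. Picking $f_n \in \overline{\Theta}$ with $f_n(\lambda_0) \in V_n$, the compactness of $\overline{\Theta}$ (Proposition \ref{prop:compact_var}) allows me to extract a subsequence $f_{n_k}$ converging to some $f \in \overline{\Theta}$, and continuity of $e_{\lambda_0}$ gives $f(\lambda_0) = \lim_k f_{n_k}(\lambda_0) = z_0$, as required.

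I do not expect any serious obstruction here: once the identification of $\nu_{\lambda_0}$ as the pushforward by evaluation is in place, the statement reduces to the standard fact that the support of a pushforward by a continuous map is the closure of the image of the support, combined with compactness of $\overline{\Theta}$. The only verification that needs attention is that $\lambda_0$ can be arranged to lie in $U_0$, which is explicitly allowed by the last assertion of Proposition \ref{prop:compact_var}.
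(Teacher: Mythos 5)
Your proof is correct and follows essentially the same route as the paper: the paper also notes that $\theta(\cdot,\lambda_0)$ pushes $P$ to $\nu_{\lambda_0}$ (so its image is dense in $\supp\nu_{\lambda_0}$), picks elements whose values at $\lambda_0$ approach $z_0$, and takes a cluster value in $\overline{\Theta}$ using the compactness from Proposition \ref{prop:compact_var} together with continuity of evaluation at $\lambda_0$. Your explicit remark that $\lambda_0$ can be arranged to lie in $U_0$ is a reasonable point the paper leaves implicit.
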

\begin{proof}
	The image
	of $ \theta_{\lambda_{0}} $ 
	is dense in $ \supp \nu_{\lambda_{0} } $.
	Choosing $ \xi_{k} $ such that
	$ \theta(\lambda_{0}, \xi_{k}) \to z_{0} $,
	we can choose $ f $ to be
	any cluster value of the sequence
	$ (\theta_{\xi_{k}}) $
	in $ \overline{\Theta} $.
\end{proof}
An analogous lemma holds dually.

We summarize the results of the
previous sections
\ref{sub:hol_poisson},
\ref{sub:intersections},
\ref{sub:compact_hol_var}:

\begin{propo}
	\label{prop:summarize}
	If $ (\rho_{\lambda})_{\lambda \in \Lambda} $ 
	satisfies the standing assumption
	and $ \chi_{1} $ and $ \chi_{d+1} $ 
	are harmonic
	on $ \Lambda $ 
	then
	there exists
	two family of holomorphic maps
	\begin{align}
		\g{graphs} & \subset Hol(\Lambda, \P V),
		\\
		\g{graphs_dual} & \subset Hol(\Lambda, \P V^{*} )
		,
	\end{align}
	which are normal
	and which are transverse:
	for any
	$ f \in \g{graphs} $ 
	and
	$ H \in \g{graphs_dual}  $,
	either $ f $ and $ H $
	don't intersect
	or $ f $ is completely contained in $ H $.
\end{propo}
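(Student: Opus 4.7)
The plan is to assemble the three ingredients developed in sections \ref{sub:hol_poisson}, \ref{sub:intersections} and \ref{sub:compact_hol_var} into a single compact, transverse pair of families.

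First I would use the harmonicity of $\chi_{1}$ to construct the holomorphic variation of the Poisson boundary
\begin{equation}
	\theta : \PP \times \Lambda \to \P V,
\end{equation}
following Section \ref{sub:hol_poisson}. For each $\xi = (\gamma_n)$ in a full measure subset of $\PP$ and a fixed basepoint $\sigma_0$, set $f_{\xi,n}(\lambda) = \rho_\lambda(\gamma_1 \cdots \gamma_n) \sigma_0$. Proposition \ref{th:bounded_vol} applied with $T_{1} = 0$ bounds the expectation of $\Vol(\Gamma(f_{\xi,n}))$, so Borel--Cantelli gives $\bar{\mu}$-a.e. $\xi$ a subsequence of bounded volume graphs; Lemma \ref{lem:bishop} extracts a holomorphic limit $\theta_\xi$ on $\Lambda$, and uniqueness of holomorphic continuation along a countable dense sequence $(\lambda_q)$ where the classical boundary maps $\theta_{\rho_{\lambda_q}}$ already agree forces the limit to be canonical. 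Doing this dually, using the harmonicity of $\chi_{d+1}$ (so $T_{d+1} = 0$), yields $\theta^{*}$.

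Next I would invoke Proposition \ref{prop:intersection} to obtain a full measure subset $\mathcal{D} \subset \PP \times \PP^{*}$ whose elements $(\xi, \xi^{*})$ satisfy: the graphs $\theta_\xi$ and $\theta^{*}_{\xi^{*}}$ are disjoint. Independently, I would establish compactness by choosing, at a base point $\lambda_0$, a family of $d+1$ systems of $d+2$ hyperplanes in general position drawn from the dense image of $\theta^{*}(\cdot, \lambda_0)$ inside $\supp \nu^{*}_{\lambda_0}$, which is possible by Lemma \ref{lem:generic_systems} since the stationary measure charges no linear subspace. The Kobayashi-type Lemma \ref{lem:hyper_compact} then shows that the $\theta_\xi$ avoiding these dual graphs form a relatively compact family modulo the diagonals, and iterating through the $d+1$ systems successively rules out every diagonal limit, giving Proposition \ref{prop:compact_var} on a Zariski dense open $U_0$ that can be arranged to contain any prescribed compact; dually, Proposition \ref{prop:compact_var_dual}.

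Finally I would define $\g{graphs}$ (respectively $\g{graphs_dual}$) as the support of the pushforward of the Poisson measure under $\xi \mapsto \theta_\xi$ (respectively $\xi^{*} \mapsto \theta^{*}_{\xi^{*}}$) inside the closure in $Hol(U_0, \P V)$ (respectively $Hol(U_0, \P V^{*})$). Normality follows from Propositions \ref{prop:compact_var} and \ref{prop:compact_var_dual}. For the transversality clause I would apply Lemma \ref{lem:intersection}: the set of non-intersecting pairs has full measure, and an isolated nontrivial intersection would persist in an open neighborhood by continuity of intersection multiplicities for analytic sets, contradicting full measure; hence any $(f,H) \in \g{graphs} \times \g{graphs_dual}$ is either disjoint or satisfies $\Gamma(f) \subset \Gamma(H)$. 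The hardest step I expect is the compactness argument: one must carefully verify that the generic family of systems of hyperplanes chosen at $\lambda_0$ stays in general position on a Zariski open containing the prescribed compact, and then iterate the diagonal-exclusion argument the correct number of times to prevent limits from collapsing into linear subspaces of intermediate codimension.
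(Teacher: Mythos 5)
Your proposal is correct and follows essentially the same route as the paper: it assembles the holomorphic variation of the Poisson boundary (bounded mean graph volumes, Borel--Cantelli, Bishop's lemma), its dual, the almost-sure non-intersection of Proposition \ref{prop:intersection}, the hyperplane-avoidance compactness of Propositions \ref{prop:compact_var} and \ref{prop:compact_var_dual}, and then upgrades almost-sure disjointness to the dichotomy (disjoint or contained) via continuity of the intersection index on the supports of the pushforward measures, exactly as in Lemma \ref{lem:intersection}. No substantive differences from the paper's argument.
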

This construction is central to the present work.
We can compare these family of graphs
with the \emph{equilibrium web}
and \emph{equilibirum lamination}
constructed in \cite{bertelootDynamicalStabilityLyapunov2018},
which play similar roles in the proof.
It seems that the construction is easier in our case,
due to the interplay between graphs and dual graphs,
which does not have an analog in the theory of endomorphisms.
They also defines \emph{branched holomorphic motions},
in the sense of
\cite{bergerStabilityHyperbolicityPolynomial2017},
and play a similar role.

\subsection{The attracting fixed points move holomorphically}
\label{sub:fixed_points_hol}

In this section we first show that a proximal element
at some parameter $ \lambda_{0} $ stays proximal
on the whole domain $ U_{0} $ given by
Proposition \ref{prop:compact_var},
then on all of $ U $.

The proof is in two step.
First we show that if $ \gamma $ is proximal at $ \lambda_{0} $
then we can find a graph $ f $  in $ \overline{\Theta} $
which is fixed by $ \rho_{\lambda} (\gamma) $
for all $ \lambda \in U_{0} $, and which is the attractive
fixed point of $ \rho_{\lambda_{0} } (\gamma) $
at $ \lambda_{0} $.
Then we show that this graph of fixed point
is in fact a graph of attractive fixed points,
that is there exists a graph of hyperplane
$ H $ 
in $ \overline{\Theta^{*}}$ such 
at every $ \lambda $, $ \rho_{\lambda} (\gamma) $
contracts every point not in $ H(\lambda) $ to $ f(\lambda) $.

\subsubsection*{Graph of fixed points}
\label{ssub:graph_fp}
Let's fix a parameter $ \lambda_{0} \in U_{0} $ 
and a group element $ \gamma $
such that $ \gamma $ is proximal at
$ \lambda_{0} $.
For $ \lambda $ in
a small neighborhood $W$ of $ \lambda_{0} $,
the matrix $ \rho_{\lambda }(\gamma) $
stays proximal.
Let $ f : W \to \P V $
be the graph of attractive fixed points
and
$ H : W \to \P V^{*} $
be the graph of repulsive fixed hyperplane of
$ \rho (\gamma) $ on $ W $ 
.
The goal is to extend
these graphs to all of $ U_{0} $.

By lemma \ref{lem:lot_of_graph},
there exists a graph
$ g \in \overline{\Theta} $ 
such that $ g(\lambda_{0}) = f(\lambda_{0}) $
and a graph of hyperplane
$ K \in \overline{\Theta^{*} } $ 
such that $ K(\lambda_{0}) = H(\lambda_{0}) $,
because an attractive fixed point
belongs to the support of the stationary measure.
We're going to show that $ f=g $ and $ H=K $
on $ W $.

Suppose, by contradiction,
that $ f \neq g$ in $ W $.
Shrinking $ W $ if necessary
we can assume that $ f $ and $ g $
intersect only at $ \lambda_{0} $.
Let $ g_{n} = \gamma^{-n} g $.
The sequence $ (g_{n}) $ is in
$ \overline{\Theta} $,
so it has a cluster value $ g_{\infty} $.
We have
$ g_{n} (\lambda_{0})=
g(\lambda_{0})
=
g_{\infty} (\lambda_{0}) $
and
$
g_{n} (\lambda) =
\rho_{\lambda}
(\gamma^{-n})
g(\lambda)
$ 
which converges to a point in $ H(\lambda) $,
so $ g_{\infty} (\lambda) \in H(\lambda) $,
for all $ \lambda \neq \lambda_{0} $.
This is impossible
because
$ g(\lambda_0)
= f(\lambda_{0})
\notin H(\lambda_{0})$
and we have obtained
the desired contradiction.
We must have $ f = g $ in $ W $.
Dually, by the same reasoning
we have that $ H = K $ in $ W $.

As $ g $ and $ K $ are defined
on all of $ U_{0} $,
we have extended $ f $ and $ H $
to $ U_{0} $.
By analytic continuation,
$ f $ stays a graph of fixed points
and $ H $ stays a graph of fixed
hyperplanes.

\subsubsection*{Graph of attractive fixed points}
We need to show that $ f $ and $ H $ stays
attractive fixed points
and repulsive hyperplanes on all of $ U_{0} $,
that is for all $ \lambda $,
for all
$ z \in 
\P V \setminus
H(\lambda) $,
we have
$ \rho_{\lambda}
(\gamma^{n})
z
\to f(\lambda) $.
In fact, it is enough
to show it for only one
generic $ z \in \P V $;
more precisely, suppose
that $ z \in \P V $ has
all of its coordinates non zero
in a basis adapted to the direct
sum
$ f(\lambda) \bigoplus H(\lambda) $.
Then
$ \rho_{\lambda}
(\gamma^{n})
z
\to f(\lambda) $
implies that the spectral radius
of
$ \rho_{\lambda}
(\gamma)_{H(\lambda)}$
is strictly smaller
than the maximal eigenvalue
of 
$ \rho_{\lambda}
(\gamma)$,
and so that
$ \rho_{\lambda}
(\gamma)$
is proximal.

Fix
some $ \lambda_{1} \in U_{0} $.
As the stationary measure
$ \nu_{\lambda_{1} }  $
doesn't charge linear subspaces,
there is a generic
$ z \notin H(\lambda_{1})  $
in its support.
Suppose that
we don't have
$ \rho_{\lambda_{1} }
(\gamma^{n})
z
\to f(\lambda_{1} ) $.
By the lemma \ref{lem:lot_of_graph},
there exists a graph
$ j \in \overline{\Theta} $ 
such that $ j(\lambda_{1})=z $.
As $ j(\lambda_{1}) \notin H(\lambda_{1}) $,
the graph $ j $ and $ H $ are disjoints.
The sequence $ \gamma^{n} j $ 
has a cluster value $ j_{\infty} $.
For all $ \lambda \in W $,
we have 
$ \gamma^{n}
j(\lambda)
\to f(\lambda) $
because $ j(\lambda) \notin H(\lambda) $
and $ f $ is a graph of attracting fixed
points over $ W $,
so
$ j_{\infty} (\lambda) 
= f(\lambda) $
for all $ \lambda \in W $.
By analytic continuation we must have
$ j_{\infty} = f$ on all of $ U_{0} $.
It implies that
$ j_{\infty} (\lambda_{1}) = f(\lambda_{1}) $ 
but that is impossible because
$ j_{\infty} (\lambda_{1})$ 
is a cluster value of
$ \rho_{\lambda_{1} }
(\gamma^{n})
z$.
We conclude that we have
the convergence
$ \rho_{\lambda_{1} }
(\gamma^{n})
z
\to f(\lambda_{1} ) $
for this generic $ z $,
and so for all $ z \notin H(\lambda_{1}) $.

\emph{Alternative proof.}
Let $ \mu_{1} $ be the log of the modulus
of the eigenvalue of $ \rho_{\lambda} (\gamma) $
at $ f(\lambda) $
and $ \mu_{2} $ be the log of the
spectral radius of $ \rho_{\lambda} (\gamma) $
on $ H(\lambda) $.
The map $ \mu_{1} $ is pluriharmonic
and the map $ \mu_{2} $ is plurisubharmonic,
so $ \mu_{1} - \mu_{2} $
is a plurisurharmonic non-negative map.
If it vanishes somewhere, it is identically null,
but it is not zero at $ \lambda_{0} $ by assumption.
Consequently we have $ \mu_{1} > \mu_{2} $ everywhere
and $ \rho_{\lambda} (\gamma) $ stays proximal on $ U_{0} $.

\section{Propagation}
\label{sec:stab_consequences}

In this section we prove
the theorem \ref{th:progation}.
This theorem is about \emph{propagation}:
we assume that a property is satisfied
at some parameter
and we prove that it is satisfied
at every parameters.
In the following proofs
the main tools is the existence
of compact and transverse
families of graps
\g{graphs} 
and
\g{graphs_dual},
see proposition
\ref{prop:summarize}.
We fix a family
$ (\rho_{\lambda}) $
satisfying the standing
assumptions
and such that
$ \chi_{1} $
and $ \chi_{d+1} $
are harmonic,
and equivalently,
by theorem \ref{th:main},
which is proximally stable.

\subsection{Faithfulness}
\label{sub:faithfulness}

Suppose that for some parameter
$ \lambda_{0} $
the representation is
faithful.
We prove that
for every $ \lambda $,
$ \rho_{\lambda} $
is faithful.

Fix a doubly-proximal element,
that is a $ \gamma_{0}  \in \Gamma $,
such that
$ \rho_{\lambda_{0} } (\gamma_{0} ) $
and
$ \rho_{\lambda_{0} } (\gamma_{0} )^{-1} $
are proximal
(such an element exist by 
\cite{guivarchProduitsMatricesAleatoires1990a}).
Fix a $ h \in \Gamma $
which is not the identity.
We want to show that
we can't have $ \rho_{\lambda_{1} } (h) = id $
for some $ \lambda_{1} \in U $.

Let $ \gamma_{1} = h \gamma_{0} h^{-1} $.
It is proximal at $ \lambda_{0} $,
so it stays proximal on all of $ U $.
We show that the graphs of
fixed points of $ \gamma_{0} $
and $ \gamma_{1} $ are disjoint
(possibly by changing $ \gamma_{0} $)
so the element $ h $ can't be
mapped to the identity at any
parameter.
For a proximal element $ \gamma $ 
(it doesn't depend on the parameter by assumption)
we denote by
$ Fix^{+} \gamma$
its graph of attractive fixed points
and by
$ Fix^{-} \gamma$
its hyperplane graph of repulsive hyperplane.

We have
$ Fix^{+} \gamma_{0}
\subset
Fix^{-} \gamma_{0}^{-1} $,
because $ \gamma_{0} $ and $ \gamma_{0}^{-1} $ 
are proximals.
So if
$ Fix^{+} \gamma_{1} $
avoids
$ Fix^{-} \gamma_{0}^{-1} $,
we're done.
If not, it means that
$ Fix^{+} \gamma_{1} $
is entirely contained in
$ Fix^{-} \gamma_{0}^{-1} $
by Lemma \ref{lem:intersection},
ie
\begin{equation}
	h( 
	Fix^{+} \gamma_{0}
	)
	\subset
	Fix^{-} \gamma_{0}^{-1}
	.
\end{equation}

We now work
at the parameter
$ \lambda_{0} $,
but we ommit it in the notation.
Denote
$
L = 
Fix^{+} \gamma_{0}
$
and
$
H=
Fix^{-} \gamma_{0}^{-1}
$,
so we have
$ h(L) \subset H $.
If we replace
$ \gamma_{0} $
by a conjugate
$ g \gamma_{0} g^{-1} $,
then $ L $ and $ H $
are replaced by
$ gL $ and $ gH $.
We show that for some $ g $,
the relation $ h(gL) \subset gH $
doesn't hold.

The existence of a doubly-proximal
element implies that the group
$ \rho_{\lambda_{0} } (\Gamma) $
is proximal on the flag variety
$ \FF = \left\{ (L,H); L \subset H \right\} $.
By results of \cite{benoistRandomWalksReductive2016}
there is a unique stationary measure $ \nu_{\FF} $
on $ \FF $. This measure doesn't charge any subvariety
of $ \FF $, in particular the subvariety
$ \left\{ (L,H); h(L) \subset H \right\} $.
As almost every orbit $ g_{n} \cdots g_{1} (L,H) $
equidistributes towards $ \nu_{\FF} $,
for some $ g = g_{n} \cdots g_{1} $ a big enough
random product, we don't have $ h(gL) \subset gH $.

For such a $ g $, replacing $ \gamma_{0} $
by $ g \gamma_{0} g^{-1} $, we have that
$ Fix^{+} \gamma_{1} $
is not entirely contained in
$ Fix^{-} \gamma_{0}^{-1} $,
because it's false at parameter $ \lambda_{0} $,
and so
$ Fix^{+} \gamma_{1} $
and
$ Fix^{-} \gamma_{0}^{-1} $
don't intersect.
Finally, the graphs
$ Fix^{+} \gamma_{0} $
and
$ Fix^{+} \gamma_{1} $
don't intersect,
and the element $ h $
can't be trivial at any parameter.

\subsection{Discreteness}

Suppose that for some parameter
$ \lambda_{0} $
the representation is
discrete.
We prove that for all parameters
$ \lambda $ the representation
$ \rho_{\lambda } $ 
is discrete.

Fix a sequence $(h_{n})$
of elements of $ \Gamma $
going to infinity in $ \Gamma $.
We're going to show that
$ \rho_{\lambda_{1}} (h_{n}) $
can't converge to the identity,
for all $ \lambda_{1} $.

As $ \rho_{\lambda_{0} } $ is discrete,
the sequence $ \rho_{\lambda_{0} } (h_{n}) $
goes to infinity.
By replacing $ (h_{n}) $ with a subsequence,
we can assume that
it converges (up to scalar) to an endomorphism
$ \pi $ with range $ R $ and (non-trivial) kernel $ K $.


The idea of the proof is the following:
let $ \gamma_{n} = h_{n} \gamma_{0} h_{n}^{-1} $
for a carefully chosen doubly-proximal $ \gamma_{0} $.
We show that the graphs $ Fix^{+} h_{n} $ converges
to a graph $ f $ that is disjoint from
$ Fix^{+} \gamma_{0} $.
It readily implies that 
$ \rho_{\lambda_{1} } (h_{n}) $ 
can't converge to the identity,
for all $ \lambda_{1} $,
as it would cause an intersection
at $ \lambda_{1} $ 
between 
$ Fix^{+} \gamma_{0} $
and $ f $.

\subsubsection*{First step}
Like in the previous section \ref{sub:faithfulness},
for each $ n $, we can choose $ \gamma_{0} $
so that
$ Fix^{+} \gamma_{n} $
avoids
$ Fix^{-} \gamma_{0}^{-1} $
.
The first step
is to show that we can choose a $ \gamma_{0} $
that works for all $ n $.

Like before, the graph
$ Fix^{+} \gamma_{n} $
is disjoint from
$ Fix^{-} \gamma_{0}^{-1} $
if the former in not
entirely contained in the latter.
We show that at the parameter $ \lambda_{0} $
they don't intersect.
We fix everything at $ \lambda_{0} $,
but we ommit it in the notation.
Let $ L = Fix^{+} \gamma_{0} $,
and $ H = Fix^{-} \gamma_{0}^{-1} $.
As we have
$ h_{n} L =
Fix^{+} \gamma_{n}$ 
we want to show that
we can choose $ \gamma_{0} $
such that
$ h_{n} L \notin H $.
With the notation from \ref{sub:faithfulness},
we want that $ (L,H) \notin \FF_{h_{n} } $.
If we replace $ \gamma_{0} $ by $g \gamma_{0} g^{-1}$,
then $ (L,H) $ is replaced by $ (gL, gH) $.
We have to find a $ g $ such that
$ (gL, gH) $ is not in
$ \bigcup \FF_{h_{n} } $.

Let $ V = 
\bigcup \FF_{h_{n} }
\cup
\FF_{\pi}
$,
where $ \FF_{\pi} $ is
the subset of $ \FF $
composed by the elements
$ (L,H) $ such that
either $ L \in K $
or $ L \notin K $
and $ \pi L \in H $.
Then $ V $ is a closed subset of $ \FF $
and is a measure $ 0 $ subset for
$ \nu_{\FF} $.
Now, for almost all large enough random product
$ g = g_{1} \cdots g_{n} $,
the orbit 
$ g (L,H) $
is not in $ V $.

Replacing $ \gamma_{0} $
by $ g \gamma_{0} g^{-1} $,
we have that
$ h_{n} L \notin H $
for all $ n $,
and consequently
$ Fix^{+} \gamma_{n} $
is disjoint from
$ Fix^{-} \gamma_{0}^{-1} $,
because they don't
intersect at the parameter
$ \lambda_{0} $.

\subsubsection*{Second step}
By compactness,
the sequence of graphs
$ Fix^{+} \gamma_{n} $
converges (up to replacing it with a subsequence)
to a graph $ f $.
We proved that all these graphs
avoids $ Fix^{-} \gamma_{0}^{-} $.
By continuity of the intersection,
the limit $ f $ is either disjoint
from it
or entirely contained in it.
We will prove that at the parameter $ \lambda_{0} $
they don't intersect,
and consequently that they are disjoint.

The value of $ f $ at $ \lambda_{0} $
is the point $ P $ which is
the limit of
$ h_{n}
(
Fix^{+}
\gamma_{0}
)$,
that is $ \pi (L) $.
But because of the way we chose $ \gamma_{0} $,
we have that
$ (L,H) \notin W $, that is $ L \notin K $
and $ \pi (L) \notin R \cap H $.
This means that $ f(\lambda_0) \notin H $,
and so $ f $ doesn't intersect
$ Fix^{-} \gamma_{0}^{-} $
at $ \lambda_{0} $.
This means that $ f $ and 
$ Fix^{-} \gamma_{0}^{-} $
are disjoint.

\subsection{The Anosov property}
\label{sub:the_anosov_property}

Suppose that $ \Gamma $
is hyperbolic.
Suppose that the family
$
(\rho_{\lambda})
$
is irreducible at every parameter
and projectively Anosov at some parameter
$ \lambda_{0} $.
We prove that for every parameter
$ \lambda $
the representation 
$
\rho_{\lambda}
$
is projectively Anosov.

By \cite[Prop. 4.10]{guichardAnosovRepresentationsDomains2012},
an irreducible representation
$ \rho $ 
is projectively Anosov
if and only if there exists two
injective, $ \rho $-equivariant,
continuous maps
\begin{equation}
	\xi^{+}:
	\partial_{\infty} \Gamma
	\to
	\P V
	\text{ and }
	\xi^{-}:
	\partial_{\infty} \Gamma
	\to
	\P V^{*} 
	,
\end{equation}
which are:
\begin{itemize}
	\item \emph{transverse}:
		for
		$ \eta \neq \eta'
		\in
		\partial_{\infty} \Gamma $,
		the line
		$ \xi^{+}(\eta) $
		is not contained
		in the hyperplane
		$ \xi^{-}(\eta') $
	\item \emph{dynamic preserving}:
		for $ \gamma \in \Gamma $
		of infinite order,
		$ \rho(\gamma) $
		is proximal
		and
		if $ \gamma_{+} $
		is its attractive fixed point
		on
		$\partial_{\infty} \Gamma $,
		then
		$ \xi^{+}(\gamma_{+}) $
		is the attractive
		fixed point
		of $ \rho(\gamma) $ 
		and
		if $ \gamma_{-} $
		is its repulsive fixed point
		on
		$\partial_{\infty} \Gamma $,
		then
		$ \xi^{-}(\gamma_{-}) $
		is the repulsive hyperplane
		of $ \rho(\gamma) $.
\end{itemize}
These maps are called
the \emph{boundary maps}
of $ \rho $.

By hypothesis, $ \rho_{\lambda_{0}} $
is Anosov.
As being Anosov is an open property,
there exists a neighborhood $ U $
of
$ \lambda_{0} $
such that for
$ \lambda \in U $
the representation
$ \rho_{\lambda} $ 
is Anosov.
Denote the associated boundary maps
by
$ \xi^{+}_{\lambda } $
and
$ \xi^{-}_{\lambda } $.
We are going to prove
that such maps exists
at every parameter,
by defining them on
the dense set of points of the form
$ \gamma_{+} $ for $ \gamma \in \Gamma $
of infinite order
and then
extend these maps
to
$\partial_{\infty} \Gamma $.

For every $ \gamma \in \Gamma $
of infinite order,
because $ \rho_{\lambda_{0} } (\gamma) $
is proximal,
$ \gamma $ is proximal at every parameter.
By the proof of \ref{sub:fixed_points_hol}
we know that the associated graph of attractive
fixed points  is in
\g{graphs}.
Let
$ \eta = \gamma_{+}  \in \partial_{\infty} \Gamma $
be an attractive fixed point.
Define
$ f_{\eta} \in \g{graphs}  $ 
to be the graph of attractive
fixed point
of $ \gamma $
and
$ H_{\eta} \in \g{graphs_dual}  $
to be the graph of repulsive
hyperplane of
$ \gamma^{-1} $.
We have 
$
f_{\eta} 
\subset
H_{\eta} 
$.
The restriction
of
$ f_{\eta} $
to $ U $
is
$ \lambda
\mapsto
\xi^{+}_{\lambda} (\eta) $ 
and the restriction
of
$ H_{\eta} $
to $ U $
is
$ \lambda
\mapsto
\xi^{-}_{\lambda} (\eta) $.
By the transversality
of
$ \xi^{+}_{\lambda_{0}} $
and
$ \xi^{-}_{\lambda_{0}} $,
the gaphs
$ f_{\eta} $ 
and
$ H_{\eta'} $ 
don't intersect at
$ \lambda_{0} $,
for $ \eta \neq \eta' $,
so they never intersect
by lemma
\ref{lem:intersection}.
We deduce that
$ f_{\eta} $ 
and
$ f_{\eta'} $
are disjoint.

A stronger property
holds:
let
$
(f_{\eta_{n})})
$
and
$
(f_{\eta_{n}'})
$
be sequences
of such graphs
with all the
$ \eta_{n}, \eta_{n}' $ 
distincts,
such that 
$
f_{\eta_{n} }
\to f
$
and
$
f_{\eta_{n}' }
\to f'
$
.
Then
$ f $
and
$ f' $
are disjoint
or equal.
Indeed,
we have
$ f_{\eta_{n} '}
\subset
H_{\eta_{n}'}
$,
$ f_{\eta_{n}} $
and $ H_{\eta_{n} '} $
are disjoints
and we can
assume that
$
H_{\eta_{n}'}
$
converges
to some $ H' $.
By continuity of the intersection,
$ f $ 
and
$ H' $
are disjoint,
or $ f $
is contained in
$ H' $.
We can assume that
$ (\eta_{n}) $ 
converges to
$ \eta $
and
$ (\eta_{n}') $ 
converges to
$ \eta' $.
By continuity
of 
$
\xi^{+}_{\lambda} 
$
for
$ \lambda \in U $,
we have that
$ f(\lambda) $
is equal
to
$
\xi^{+}_{\lambda} 
(\eta)
$,
$ f'(\lambda) $
is equal
to
$
\xi^{+}_{\lambda} 
(\eta')
$
and
$ H'(\lambda) $ 
is equal to
$
\xi^{-}_{\lambda} 
(\eta')
$.
If $ \eta $
is different
from
$ \eta' $
then by transersality
of the boundary maps,
$ f(\lambda_{0}) $
is not contained in
$ H'(\lambda_{0}) $,
so $ f $ is
disjoint from $ H' $.
If $ \eta $
is equal to
$ \eta' $,
then $ f $ and
$ f' $
coincide
on $ U $.
By analytic continuation,
they coincide everywhere.

This strong transversality
has the following consequence.
Let $ \mathcal{F} $
be the closure of the
set of
$ f_{\eta} $ 
for
$ \eta \in \partial_{\infty} \Gamma $
an attractive fixed point.
Then for each point
$ z $ 
of
$
L(\lambda_{0})
= \xi^{+}_{\lambda_{0} }
(
\partial_{\infty} \Gamma
)
$,
there exists
exactly one
$ f \in \mathcal{F} $
such that $ f(\lambda_{0}) = z $.
Denote by
$ f_{z} $
this graph.
It is clear that
for each $ \lambda $:
\begin{equation}
	\left\{ 
		f_{z} (\lambda);
		z \in L(\lambda_{0})
	\right\}
	=
	L(\lambda)
	,
\end{equation}
and that the same property
holds:
for each $ x \in L(\lambda) $,
there exists exactly
one
$ f \in \mathcal{F} $
such that
$ f(\lambda) = x $.
Fix a $ \lambda $
and let:
\begin{align}
	\varphi :
	L(\lambda_{0}) & \to L(\lambda)
	\\
	z  & \mapsto
	f_{z} ( \lambda )
	.
\end{align}
We show that $ \varphi $
is continuous.
Let $ z_{n} \to z $
be a convergent sequence
in $ L(z_{0}) $.
Let $ l $ be
the limit
of a converging
subsequence
$ (\varphi(z_{n_{k} })) $
.
The sequence of graphs
$ (f_{z_{n_{k}}}) $
has a cluster value
$ f $.
Evaluating at $ \lambda_{0} $
we have that $ f(\lambda_{0}) = z $,
so by uniqueness,
$ f = f_{z} $.
This implies that
$ l = \varphi(z) $.
The only cluster value
of the sequence
$ (\varphi(z_{n})) $
is $ \varphi(z) $,
so 
$
\varphi(z_{n})
\to
\varphi(z)
$.
The map $ \varphi $
is continuous.

Define
$ \xi^{+}_{\lambda} $
to be
$ \varphi \circ \xi^{+}_{\lambda_{0}}$.
We construct similarly
$ \xi^{-}_{\lambda} $.
Then
$ \xi^{+}_{\lambda} $
and
$ \xi^{-}_{\lambda} $
are boundary maps
for $ \rho_{\lambda} $.
It follows that $ \rho_{\lambda} $
is Anosov.

\printbibliography
\end{document}